\begin{document}
\theoremstyle{plain}
\newtheorem{thm}{Theorem}[section]
\newtheorem*{thm1}{Theorem 1}
\newtheorem*{thm2}{Theorem 2}
\newtheorem{lemma}[thm]{Lemma}
\newtheorem{lem}[thm]{Lemma}
\newtheorem{cor}[thm]{Corollary}
\newtheorem{prop}[thm]{Proposition}
\newtheorem{propose}[thm]{Proposition}
\newtheorem{variant}[thm]{Variant}
\newtheorem{conjecture}[thm]{Conjecture}
\theoremstyle{definition}
\newtheorem{notations}[thm]{Notations}
\newtheorem{notation}[thm]{Notation}
\newtheorem{rem}[thm]{Remark}  
\newtheorem{rmk}[thm]{Remark}
\newtheorem{rmks}[thm]{Remarks}
\newtheorem{defn}[thm]{Definition}
\newtheorem{ex}[thm]{Example}
\newtheorem{claim}[thm]{Claim}
\newtheorem{ass}[thm]{Assumption} 
\numberwithin{equation}{section}
\newcounter{elno}                \newcommand{\mc}{\mathcal}
\newcommand{\mb}{\mathbb}
\newcommand{\surj}{\twoheadrightarrow}
\newcommand{\inj}{\hookrightarrow} \newcommand{\zar}{{\rm zar}}
\newcommand{\an}{{\rm an}} \newcommand{\red}{{\rm red}}
\newcommand{\Rank}{{\rm rk}} \newcommand{\codim}{{\rm codim}}
\newcommand{\rank}{{\rm rank}} \newcommand{\Ker}{{\rm Ker \ }}
\newcommand{\Pic}{{\rm Pic}} \newcommand{\Div}{{\rm Div}}
\newcommand{\Hom}{{\rm Hom}} \newcommand{\im}{{\rm im}}
\newcommand{\Spec}{{\rm Spec \,}} \newcommand{\Sing}{{\rm Sing}}
\newcommand{\sing}{{\rm sing}} \newcommand{\reg}{{\rm reg}}
\newcommand{\Char}{{\rm char}} \newcommand{\Tr}{{\rm Tr}}
\newcommand{\Gal}{{\rm Gal}} \newcommand{\Min}{{\rm Min \ }}
\newcommand{\Max}{{\rm Max \ }} \newcommand{\Alb}{{\rm Alb}\,}
\newcommand{\GL}{{\rm GL}\,} 
\newcommand{\ie}{{\it i.e.\/},\ } \newcommand{\niso}{\not\cong}
\newcommand{\nin}{\not\in}
\newcommand{\soplus}[1]{\stackrel{#1}{\oplus}}
\newcommand{\by}[1]{\stackrel{#1}{\rightarrow}}
\renewcommand{\bar}{\overline}
\newcommand{\longby}[1]{\stackrel{#1}{\longrightarrow}}
\newcommand{\vlongby}[1]{\stackrel{#1}{\mbox{\large{$\longrightarrow$}}}}
\newcommand{\ldownarrow}{\mbox{\Large{\Large{$\downarrow$}}}}
\newcommand{\lsearrow}{\mbox{\Large{$\searrow$}}}
\renewcommand{\d}{\stackrel{\mbox{\scriptsize{$\bullet$}}}{}}
\newcommand{\dlog}{{\rm dlog}\,} 
\newcommand{\longto}{\longrightarrow}
\newcommand{\vlongto}{\mbox{{\Large{$\longto$}}}}
\newcommand{\limdir}[1]{{\displaystyle{\mathop{\rm lim}_{\buildrel\longrightarrow\over{#1}}}}\,}
\newcommand{\liminv}[1]{{\displaystyle{\mathop{\rm lim}_{\buildrel\longleftarrow\over{#1}}}}\,}
\newcommand{\norm}[1]{\mbox{$\parallel{#1}\parallel$}}
\newcommand{\into}{\hookrightarrow} \newcommand{\image}{{\rm image}\,}
\newcommand{\Lie}{{\rm Lie}\,} 
\newcommand{\CM}{\rm CM}
\newcommand{\sext}{\mbox{${\mathcal E}xt\,$}} 
\newcommand{\shom}{\mbox{${\mathcal H}om\,$}} 
\newcommand{\coker}{{\rm coker}\,} 
\newcommand{\sm}{{\rm sm}}
\newcommand{\tensor}{\otimes}
\renewcommand{\iff}{\mbox{ $\Longleftrightarrow$ }}
\newcommand{\supp}{{\rm supp}\,}
\newcommand{\ext}[1]{\stackrel{#1}{\wedge}}
\newcommand{\onto}{\mbox{$\,\>>>\hspace{-.5cm}\to\hspace{.15cm}$}}
\newcommand{\propsubset} {\mbox{$\textstyle{
\subseteq_{\kern-5pt\raise-1pt\hbox{\mbox{\tiny{$/$}}}}}$}}
\newcommand{\sA}{{\mathcal A}}
\newcommand{\sB}{{\mathcal B}} \newcommand{\sC}{{\mathcal C}}
\newcommand{\sD}{{\mathcal D}} \newcommand{\sE}{{\mathcal E}}
\newcommand{\sF}{{\mathcal F}} \newcommand{\sG}{{\mathcal G}}
\newcommand{\sH}{{\mathcal H}} \newcommand{\sI}{{\mathcal I}}
\newcommand{\sJ}{{\mathcal J}} \newcommand{\sK}{{\mathcal K}}
\newcommand{\sL}{{\mathcal L}} \newcommand{\sM}{{\mathcal M}}

\newcommand{\sN}{{\mathcal N}} \newcommand{\sO}{{\mathcal O}}
\newcommand{\sP}{{\mathcal P}} \newcommand{\sQ}{{\mathcal Q}}
\newcommand{\sR}{{\mathcal R}} \newcommand{\sS}{{\mathcal S}}
\newcommand{\sT}{{\mathcal T}} \newcommand{\sU}{{\mathcal U}}
\newcommand{\sV}{{\mathcal V}} \newcommand{\sW}{{\mathcal W}}
\newcommand{\sX}{{\mathcal X}} \newcommand{\sY}{{\mathcal Y}}
\newcommand{\sZ}{{\mathcal Z}} \newcommand{\ccL}{\sL}
 \newcommand{\A}{{\mathbb A}} \newcommand{\B}{{\mathbb
B}} \newcommand{\C}{{\mathbb C}} \newcommand{\D}{{\mathbb D}}
\newcommand{\E}{{\mathbb E}} \newcommand{\F}{{\mathbb F}}
\newcommand{\G}{{\mathbb G}} \newcommand{\HH}{{\mathbb H}}
\newcommand{\I}{{\mathbb I}} \newcommand{\J}{{\mathbb J}}
\newcommand{\M}{{\mathbb M}} \newcommand{\N}{{\mathbb N}}
\renewcommand{\P}{{\mathbb P}} \newcommand{\Q}{{\mathbb Q}}

\newcommand{\R}{{\mathbb R}} \newcommand{\T}{{\mathbb T}}
\newcommand{\Z}{{\mathbb Z}}

\title[Density function for the second coefficient of the Hilbert-Kunz 
function]
{Density function for the second coefficient of the Hilbert-Kunz 
function on Projective Toric Varieties}

\author{Mandira Mondal and Vijaylaxmi Trivedi }
\date{}
\address{Chennai Mathematical Institute\\ H1, SIPCOT IT Park, Siruseri\\
Kelambakkam 603103, India\\\\
School of Mathematics, Tata Institute of Fundamental Research\\
Homi Bhabha Road \\ Mumbai-400005, India}

\email{mandiram@cmi.ac.in; vija@math.tifr.res.in}

\thanks{}

\maketitle

\begin{abstract}
We prove that, analogous to the HK density function, (used for
studying  the Hilbert-Kunz multiplicity, 
the leading coefficient of the HK function),  there exists a 
$\beta$-density function $g_{R, {\bf m}}:[0,\infty)\longto \R$, where 
$(R, {\bf m})$ is the homogeneous coordinate ring associated to the toric pair
$(X, D)$, such that 
$$\int_0^{\infty}g_{R, {\bf m}}(x)dx = \beta(R, {\bf m}),$$
where $\beta(R, {\bf m})$ is the second coefficient of the Hilbert-Kunz 
function for $(R, {\bf m})$, 
as constructed by Huneke-McDermott-Monsky

Moreover we prove, (1)~~the
function $g_{R, {\bf m}}:[0, \infty)\longto \R$ is compactly supported
and is  continuous
 except at finitely many points, (2)~~the function $g_{R, {\bf m}}$ is multiplicative 
for the Segre products with the expression
involving the first two coefficients of the Hilbert polynomials of
the rings involved.

Here we also prove  and use a result (which is a refined 
version of a result by Henk-Linke) on the boundedness of the coefficients
 of rational Ehrhart quasi-polynomials of  convex rational  polytopes.
\end{abstract}
\section{Introduction}
Let $R$ be a Noetherian ring of dimension $d$ and prime characteristic 
$p$, and let $I\subset R$\ be an ideal such that $\ell{(R/I)}<\infty$.
Let $M$ be a finitely generated $R$-module. Then the Hilbert-Kunz function 
of $M$ with respect to $I$ is defined by 
$$\mbox{HK}(M, I)(n)=\ell(M/I^{[q]}M)$$
where $q=p^n$, the ideal $I^{[q]}=n$-th Frobenius power of $I$, and 
$\ell(M/I^{[q]}M)$ denotes the length of the $R$-module $M/I^{[q]}M$. 
The limit $$\text{lim}_{n\to\infty}\frac{\ell(M/I^{[p^n]}M)}{q^{d}}$$
exists (\cite{M}~1983) and is called the Hilbert-Kunz multiplicity of $M$ 
with respect to the ideal $I$ (denoted by $e_{HK}(M, I)$). Thus 
$HK(M,I)(n) = e_{HK}(M,I)q^d+O(q^{d-1})$. The 
Hilbert-Kunz multiplicity has been studied by many people since then. 

In 2004, Huneke-McDermott-Monsky (\cite{HMM2004}) proved the 
existence of a 2nd coefficient for the Hilbert Kunz function:
\vspace{.2 cm}

\begin{thm}\label{HMM} (Theorem~1 of \cite{HMM2004}):
Let $R$ be an excellent normal Noetherian ring of dimension $d$ 
and characteristic $p$ and let $I\subset R$ be an ideal such that 
$\ell{(R/I)}<\infty$. Let $M$ be a finitely generated $R$-module. 
Then there exists a real number $\beta(M, I)$ such that
$$\mbox{HK}(M, I)(n)=e_{HK}(M, I)q^{d}+ \beta(M, I)q^{d-1}+O(q^{d-2}).$$
\end{thm}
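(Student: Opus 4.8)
The plan is to follow the strategy by which Monsky established the existence of $e_{HK}$ \cite{M}, pushed one order further. First I would perform the usual reductions: since $\mathrm{HK}(M,I)(n)$ is unchanged by passing to $\widehat R$ and $R$ is \emph{excellent}, so that $\widehat R$ is again normal, we may assume $R$ is complete local; a standard faithfully flat base change then lets us assume in addition that the residue field is perfect (in particular that $R$ is $F$-finite), at the cost of multiplying the whole Hilbert--Kunz function — hence also $e_{HK}$ and the sought-for $\beta$ — by a fixed constant. After these reductions one has the reformulation
$$\mathrm{HK}(M,I)(n)=\ell_R\!\left(F^n_*M\,/\,I\,F^n_*M\right),\qquad q=p^n,$$
where $F^n_*$ denotes restriction of scalars along the $n$-th Frobenius; this uses that $F$ is finite and the residue field perfect, and that $F^n_*M$ is a finitely generated $R$-module of rank $q^d\cdot\mathrm{rank}(M)$. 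Thus the problem becomes to expand $\ell_R(N_n/IN_n)$ to two terms in $q$ with error $O(q^{d-2})$, where $N_n=F^n_*M$ is an explicit family of finitely generated modules whose rank grows like $q^d$.

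Next, by d\'evissage on $M$ I would reduce to two cases: $M$ a divisorial ideal (rank-one reflexive module), and $M$ of dimension $\le d-1$, where the assertion is the analogue one dimension lower and can be folded into an induction on $d$. The reduction uses that a short exact sequence $0\to M'\to M\to M''\to 0$, after applying $-\otimes_R R/I^{[q]}$, introduces only errors governed by $\mathrm{Tor}^R_i(R/I^{[q]},M'')$ and by submodules of smaller dimension, and one must check that these contributions are themselves of the form $(\text{const})\,q^{d-1}+O(q^{d-2})$ and so do not spoil the two-term expansion. It is here that normality is essential: over a normal domain the reflexive modules $F^n_*M$ are pinned down in codimension $\le 1$, i.e.\ by their rank together with a class in the divisor class group $\mathrm{Cl}(R)$, and $\mathrm{Cl}(R)$ is the natural receptacle for the subleading data.

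The heart of the argument is then a two-part structural analysis. First, a comparison step: for the purpose of computing $\ell_R(N/IN)$ modulo $O(q^{d-2})$, one shows that $N$ may be replaced by $R^{a}\oplus L$, where $a$ and the divisorial ideal $L$ are determined by the rank of $N$ and its codimension-one class; applied to $N=F^n_*M$, and using that Frobenius push-forward scales the codimension-one part of the Grothendieck group (essentially $\mathrm{Cl}(R)$) by $p^{d-1}$ up to a fixed transfer, the resulting class grows like $p^{(d-1)n}$ times a fixed class. Second, a volume computation: as the class of a divisorial ideal $L$ grows linearly, $\ell_R(L/IL)=\ell_R(R/IR)+(\text{linear term})+O(q^{d-2})$, the linear term being a mixed multiplicity of $I$ against the fixed divisorial data. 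Assembling these, $\ell_R(F^n_*M/IF^n_*M)=c_d\,q^d+c_{d-1}\,q^{d-1}+O(q^{d-2})$; matching $c_d$ with Monsky's $e_{HK}(M,I)$ and setting $\beta(M,I):=c_{d-1}$, a definite real number expressed through the fixed divisorial data and mixed multiplicities, yields the theorem.

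I expect the main obstacle to be exactly this structural analysis: proving that $\ell_R(-/I-)$ is insensitive, modulo $O(q^{d-2})$, to everything in a module beyond its rank and codimension-one divisorial class — which requires a uniform (in $n$) bound on the lengths of the auxiliary modules of dimension $\le d-2$ that arise — together with establishing the asymptotic two-term (mixed-multiplicity) expansion of $\ell_R(L/IL)$ as the divisor class of $L$ grows, again with a genuine $O(q^{d-2})$ remainder rather than merely $o(q^{d-1})$. Controlling these codimension-$\ge 2$ and $\mathrm{Tor}$ contributions, rather than the bookkeeping with $\mathrm{Cl}(R)$ and the Frobenius action on it, is where the real work lies.
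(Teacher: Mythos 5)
The paper itself does not prove this statement: it is quoted as Theorem~1 of Huneke--McDermott--Monsky \cite{HMM2004}, and the only place the present paper ``proves'' it is in Corollary~\ref{thebeta} together with Remark~\ref{mr}, where the existence of $\beta$ is re-derived, by a completely different route, in the \emph{special case} of the homogeneous coordinate ring $(R,{\bf m})$ of a toric pair. That argument is purely combinatorial: decompose Eto's bounded body $\overline{\mathcal P}_D$ into rational convex polytopes $P_j$ and pieces $E_{j\nu}$, use Ehrhart quasi-polynomial theory (and the uniform boundedness of coefficients proved in Theorems~\ref{t1} and~\ref{tcb}) to control $\ell(R/{\bf m}^{[q]})_m$ one graded piece at a time, build the $\beta$-density function $g_{R,{\bf m}}$, and integrate. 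It yields an explicit formula for $\beta$ as a relative volume and shows $\beta$ is rational and characteristic-free, but it only works in the graded toric setting and for $M=R$, $I={\bf m}$.

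Your sketch is instead a faithful outline of the original HMM argument in full generality: pass to a complete local normal ring with perfect residue field, rewrite $\ell(M/I^{[q]}M)=\ell_R(F^n_*M/I\,F^n_*M)$ using $F$-finiteness, d\'evissage to divisorial ideals modulo lower-dimensional modules, track the codimension-one class in $\mathrm{Cl}(R)$ (on which $F_*$ acts by $p^{d-1}$ up to a fixed transfer class), show $\ell_R(-/I-)$ depends, modulo $O(q^{d-2})$, only on rank and this class, and extract $\beta$ from the resulting two-term expansion. Your diagnosis of the real difficulties --- uniformly bounding the $\mathrm{Tor}$ and codimension-$\geq 2$ contributions across all $n$, and upgrading the linear-in-divisor-class estimate for $\ell(L/IL)$ to a genuine $O(q^{d-2})$ remainder rather than $o(q^{d-1})$ --- is accurate and matches where HMM do the hard work. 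So the proposal is a correct sketch of the original proof, which is essentially orthogonal to the Ehrhart-theoretic reproof this paper gives in the toric case: the former is general but non-explicit, the latter is explicit and characteristic-independent but narrow.
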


 They also found a relation with the divisor class group. This invariant 
was further studied by Kurano in [Ku] and 
he proved there that $\beta(M, I) =0$ if $R$ is $\Q$-Gorenstein ring and $M$ is a 
Noetherian $R$-module of finite projective dimension.  
Later the above theorem of Huneke-McDermott-Monsky was generalised 
from normal rings to the rings satisfying $(R_1)$ condition by Chan-Kurano in [CK] (also independently by 
Hochster-Yao in [HY]). 
Later Bruns-Gubeladze in [BG] have proved that 
HK function is a quasi polynomial and gave another proof of the 
existence of the constant second coefficient 
$\beta(R, {\bf m})$ for a  normal affine monoid, 

In order to study $e_{HK}(R, I)$,  
when  $R$ is a standard graded ring ($\dim~R\geq 2)$ 
and $I$ is a homogeneous ideal of finite colength, 
the second author (in \cite{Tri2015}) has defined the notion of 
Hilbert-Kunz Density function, and obtained its relation with the 
\mbox{HK} multiplicity 
(stated in this paper as Theorem~\ref{hkd}): 
{\it The \textnormal{\mbox{HK}} density function is a compactly supported continuous function 
$f_{R,I}:[0,\infty) \longto 
\R_{\geq 0}$ such that 
$$e_{HK}(R, I)=\int_0^\infty f_{R, I}(x)dx.$$

Moreover there exists a sequence of functions $\{f_n(R,I):[0,\infty)\longto 
\R_{\geq 0}\}_n$
given by 
$$f_n(R, I)(x) = \frac{1}{q^{d-1}}\ell(R/I^{[q]})_{\lfloor xq\rfloor}$$
such that $f_n$ converges uniformly to $f_{R,I}$.}

The existence of a  uniformly converging sequence  makes 
 the density function a more refined invariant (compared to $e_{HK}$) 
in the graded situation, 
and  a useful tool, {\it e.g.}, in  suggesting 
a simpler approach to the HK multiplicity  in characteristic $0$ (see [T4]), in
studying the asymptotic growth of 
$e_{HK}(R, {\bf m}^k)$ as $k\to \infty$ (see [T3]).
Applying  the theory of HK density functions  to  projective toric varieties 
(denoted here as toric pairs $(X,D)$), one obtains 
(Theorem~6.3 of [MT]) an algebraic  characterization of the tiling propery of 
the associated polytopes $P_D$ (in the ambient lattice) 
in terms of such  asymptotic behaviour of $e_{HK}$.

In the light of Theorem \ref{hkd}, one can speculate  whether 
there exists a similar `$\beta$-density function' 
$g_{R, {\bf m}}: [0,\infty)\to \R$  such that 
$$\int_{0}^{\infty}g_{R, {\bf m}}(x)\ dx= \beta(R, I),$$
which may similarly refine the $\beta$-invariant of [HMM] in the  graded 
case.

We find that this is indeed true for a toric pair $(X, D)$, {\it i.e.},
$X$  a projective toric variety  
over an algebraically closed field $K$ of characteristic $p>0$, with a
very ample $T$-Cartier divisor $D$. Let $R$ be the homogeneous coordinate ring of
$X$, with respect to the embedding given by the very ample
line bundle $\mathcal{O}_X(D)$,
and let $\textbf{m}$ be the homogeneous maximal ideal of $R$. We also use the 
notation $\mbox{rVol}_{d}$ to denote the ${d}$-dimensional 
relative volume function (see Definition~\ref{rv}).

We construct such a $\beta$-density function $g_{R, {\bf m}}$ as 
a limit of a `uniformly' 
converging sequence  of functions
$\{g_n:[0,\infty)\longto \R\}_{n\in \N}$, which are given by 
\begin{equation}\label{*g}
g_n(\lambda)=\frac{1}{q^{d-2}}\left(
\ell(R/\textbf{m}^{[q]})_{\lfloor \lambda q\rfloor}
-{\tilde f_n}(\lambda) q^{d-1}\right),\end{equation}
where ${\tilde f_n}(\lambda) = f_{R, {\bf m}}({\lfloor \lambda q\rfloor}/q)$
and $f_{R,{\bf m}}$ denotes the \mbox{HK} density function 
for $(R,{\bf m})$. From the construction it follows 
that $g_n$ is a compactly supported function.

Recall that given a toric pair $(X, D)$, of dimension $d-1$,  
there is a convex 
lattice polytope 
$P_D$, a convex polyhedral cone $C_D$  and a bounded body 
$\sP_D$ as in Notations~\ref{n1} (such a bounded body was introduced by 
K. Eto (see [Et]), in order to study the HK multiplicity for a toric ring, and  
he proved there that $e_{HK}$ is the relative volume of such a body). 
In [MT], it was  shown that the HK density function at $\lambda $ is 
the relative volume of the $\{z=\lambda\}$ slice of $\sP_D$. Here we prove that 
$\beta$-density function at $\lambda$ is expressible in terms of the 
relative volume of the $\{z=\lambda\}$ slice of the boundary, $\partial(\sP_D)$, 
of $\sP_D$.
 
In this paper  the following  is the main result. 

\vspace{5pt}

\noindent{\bf Main Theorem}.~
{\it  Let $(R, {\bf m})$ be the  
homogeneous coordinate ring of dimension $d\geq 3$, associated 
to the toric pair $(X, D)$.
Then  there exists a finite set $v(\sP_D)\subseteq 
\R_{\geq 0}$ such that, for any compact set 
$V\subseteq \R_{\geq 0}\setminus v(\sP_D)$, the sequence $\{g_n|_V\}_n$ 
converges uniformly  to $g_{R, {\bf m}}|_V$, where 
$g_{R,{\bf m}}:\R_{\geq 0}\setminus v(\sP_D)\longto \R$ is a continuous 
function  given by 
$$g_{R, {\bf m}}(\lambda) = 
 \mbox{rVol}_{d-2}\left(\partial({\sP_D})\cap \partial(C_D)
\cap \{z=\lambda\}\right)
- \frac{\mbox{rVol}_{d-2}\left(\partial({\sP_D})\cap 
\{z=\lambda\}\right)}{2}.$$

Moreover, for $q =p^n$, we have 
 $$\int_0^\infty g_n(\lambda)d\lambda = \int_0^\infty 
g_{R, {\bf m}}(\lambda)d\lambda +O(\frac{1}{q})~~\mbox{and}~~ 
\int_0^\infty {\tilde f_n}(\lambda)d\lambda = \int_0^\infty 
f_{R, {\bf m}}(\lambda)d\lambda +O(\frac{1}{q^2}).$$}

As a consequnce we get the following 
\begin{cor}\label{thebeta}{\it With the notations as above, we have 
$$\beta(R, {\bf m}) = \int_0^\infty g_{R, {\bf m}}(\lambda)d\lambda = 
 \mbox{rVol}_{d-1}\left(\partial({\sP_D})\cap \partial(C_D)
\right)
- \frac{\mbox{rVol}_{d-1}\left(\partial({\sP_D})\right)}{2}$$
and  the Hilbert-Kunz function of $R$ with 
respect to the maximal ideal ${\bf m}$ is  given by
 $$HK(R, {\bf m})(q) = e_{HK}(R, {\bf m})q^d +
\beta(R, {\bf m})q^{d-1}+O(q^{d-2}).$$}
\end{cor}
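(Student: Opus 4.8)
\textbf{Proof proposal for Corollary \ref{thebeta}.}

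The plan is to deduce the Corollary from the Main Theorem purely by integrating the uniformly converging sequence $\{g_n\}$ and matching against the definition of the Huneke--McDermott--Monsky $\beta$-invariant. First I would recall, from the construction in \eqref{*g}, that
\begin{equation*}
\sum_{i\ge 0} q^{d-2} g_n\!\left(\tfrac{i}{q}\right)
= \sum_{i\ge 0}\Bigl(\ell(R/{\bf m}^{[q]})_i - \tilde f_n\!\left(\tfrac iq\right) q^{d-1}\Bigr)
= \ell(R/{\bf m}^{[q]}) - q^{d-1}\sum_{i\ge 0}\tilde f_n\!\left(\tfrac iq\right),
\end{equation*}
where the sums are finite because $g_n$ and $\tilde f_n$ are compactly supported. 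Dividing by $q$, the left side is a Riemann sum for $\int_0^\infty g_n(\lambda)\,d\lambda$ with step $1/q$; since $g_n$ is a step function built from $f_{R,{\bf m}}$ sampled at the points $i/q$, and the integrand is bounded with controlled jumps, the Riemann sum equals $\int_0^\infty g_n(\lambda)\,d\lambda$ up to an $O(1/q^{d-2}\cdot q^{\,?})$ error that is absorbed; more cleanly, by the "moreover" part of the Main Theorem, $\int_0^\infty g_n = \int_0^\infty g_{R,{\bf m}} + O(1/q)$ and $\int_0^\infty \tilde f_n = \int_0^\infty f_{R,{\bf m}} + O(1/q^2)$. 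Combining, and using Theorem~\ref{hkd} which gives $e_{HK}(R,{\bf m}) = \int_0^\infty f_{R,{\bf m}}(\lambda)\,d\lambda$, one obtains
\begin{equation*}
\ell(R/{\bf m}^{[q]}) = q^d \int_0^\infty f_{R,{\bf m}}(\lambda)\,d\lambda
+ q^{d-1}\int_0^\infty g_{R,{\bf m}}(\lambda)\,d\lambda + O(q^{d-2})
= e_{HK}(R,{\bf m})\,q^d + \Bigl(\int_0^\infty g_{R,{\bf m}}\Bigr) q^{d-1} + O(q^{d-2}).
\end{equation*}

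Next I would invoke Theorem~\ref{HMM}: since $R$ is a normal toric ring (in particular excellent normal Noetherian of dimension $d$), there is a real number $\beta(R,{\bf m})$ with $HK(R,{\bf m})(n) = e_{HK}(R,{\bf m})q^d + \beta(R,{\bf m})q^{d-1} + O(q^{d-2})$. Comparing the two expansions of $\ell(R/{\bf m}^{[q]}) = HK(R,{\bf m})(q)$ and subtracting the common leading term, we get $(\beta(R,{\bf m}) - \int_0^\infty g_{R,{\bf m}})\,q^{d-1} = O(q^{d-2})$, hence $\beta(R,{\bf m}) = \int_0^\infty g_{R,{\bf m}}(\lambda)\,d\lambda$. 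This simultaneously establishes the displayed Hilbert--Kunz expansion in the Corollary (it is just Theorem~\ref{HMM} with the identified second coefficient).

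Finally, to obtain the closed geometric formula I would integrate the pointwise expression from the Main Theorem: for $\lambda \notin v(\sP_D)$,
\begin{equation*}
g_{R,{\bf m}}(\lambda) = \mathrm{rVol}_{d-2}\bigl(\partial(\sP_D)\cap\partial(C_D)\cap\{z=\lambda\}\bigr) - \tfrac12\,\mathrm{rVol}_{d-2}\bigl(\partial(\sP_D)\cap\{z=\lambda\}\bigr).
\end{equation*}
Since $v(\sP_D)$ is finite, integrating over $\lambda$ and applying Fubini/the layer-cake (slicing) formula for the $(d-1)$-dimensional relative volume of a body in terms of the relative volumes of its $\{z=\lambda\}$ slices gives
\begin{equation*}
\int_0^\infty g_{R,{\bf m}}(\lambda)\,d\lambda = \mathrm{rVol}_{d-1}\bigl(\partial(\sP_D)\cap\partial(C_D)\bigr) - \tfrac12\,\mathrm{rVol}_{d-1}\bigl(\partial(\sP_D)\bigr),
\end{equation*}
which is the asserted formula for $\beta(R,{\bf m})$. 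The main point requiring care --- and the place I would spend most effort --- is justifying that the slicing identity $\int_0^\infty \mathrm{rVol}_{d-2}(S\cap\{z=\lambda\})\,d\lambda = \mathrm{rVol}_{d-1}(S)$ applies to the relevant pieces $S = \partial(\sP_D)$ and $S = \partial(\sP_D)\cap\partial(C_D)$, which are unions of facets of a bounded body lying in various affine subspaces, not full-dimensional regions; one must check that these facets are transverse to (or contained in) the hyperplanes $\{z=\lambda\}$ for all but finitely many $\lambda$ (precisely the exceptional set $v(\sP_D)$), so that the relative volume disintegrates correctly, and that the finitely many bad slices contribute nothing to the integral. The exchange of limit and integral in the first paragraph is routine given the uniform convergence on compact sets away from $v(\sP_D)$ together with the uniform compact support of the $g_n$.
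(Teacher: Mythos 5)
Your argument is essentially the paper's own proof: you integrate the step functions $g_n$ exactly to obtain $\ell(R/{\bf m}^{[q]}) = q^{d}\int_0^\infty \tilde f_n + q^{d-1}\int_0^\infty g_n$, then apply the Main Theorem's integral estimates together with $e_{HK}(R,{\bf m}) = \int_0^\infty f_{R,{\bf m}}$ to get the expansion, and obtain the $\mbox{rVol}$ formula from the definition of $g_{R,{\bf m}}$ by Fubini/slicing, exactly as the paper does. The only deviation is your appeal to Theorem~\ref{HMM} to identify the second coefficient: this step is redundant, since your first paragraph already establishes the expansion directly (which is how the paper argues; see Remark~\ref{mr}), and its normality hypothesis is not automatic for the degree-one homogeneous coordinate ring of a very ample toric embedding, so it is better omitted or replaced by simply reading off the coefficient from the proven expansion.
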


\vspace{5pt}
Note that we can write 
$$g_n(\lambda) = {\#(q{\mathcal P}_D\cap\{z= {\lfloor 
\lambda q\rfloor}\})}/q^{d-2}
-(q)f_{R,{\bf m}}({\lfloor \lambda q\rfloor}/q),$$
where $\#$ denotes the number of lattice points.

We show in Section~3 that 
  ${\bar{\mathcal P}_D} = (\cup_jP_j) \setminus 
(\cup_{j, \nu}E_{j_\nu})$, where $P_j$ and $E_{j_\nu}$ are  certain 
rational convex polytopes with proper intersections.
Then by applying the theory of Ehrhart quasi-polynomials and 
exhibiting that  (in the case of a toric pair)
the second coefficients of relevant  Ehrhart quasi-polynomials 
are constant, we deduce 
that for $x \in S=\{m/p^n\mid m,n\in \N\}\setminus v(\sP_D)$,
the sequence $\{g_n(x)\}_n$ is  convergent and converges pointwise to 
$g_{R, {\bf m}}$.
However we still know neither the existence of
$\lim_{n\to \infty}g_n(x)$, for every $x \in [0, \infty)$ (or for all 
$x$ except at finite number of points), nor
that this limit is a continuous function  on the dense set $S$. On
the other hand, for $\lambda_n := {\lfloor\lambda q\rfloor}/q \in S$, we have 
$$g_n(\lambda) = g_{R,{\bf m}}(\lambda_n)
+{\tilde c}(\lambda_n)/q,$$
where ${\tilde c}(\lambda_n)$ involves coefficients of  Ehrhart
quasi-polynomials of facets of $P_j\cap \{z=\lambda_n\}$.
Therefore, to achieve a `uniform convergence', we needed to prove 
the following: 

\vspace{5pt}

\noindent{\bf Theorem}~\ref{tcb}\quad {\it For a rational  convex 
polytope $P$, where $P_\lambda := P\cap \{z = \lambda\}$ and its Ehrhart
quasi-polynomial is given by 
$$i(P_\lambda, q) =
\sum_{i=0}^{\dim(P)}C_i(P_\lambda, q) q^i, \quad\mbox{if}~~\lambda q\in 
\Z_{\geq 0},$$
there exist constant ${\tilde c_i}(P)$ such that
every $|C_i(P_\lambda, q)|\leq {\tilde c_i}(P)$, for all 
 $q\lambda \in \Z_{\geq 0}$ and $q = p^n$}.

We prove the result using the theory of lattice points
in non-negative rational Minkowski sums. 
In fact we prove a general result 
 about convex rational polytopes: Recall that, for rational convex 
polytopes 
$P_1, P_2\subset \R^d$ with $\dim~(P_1+P_2) = d$, a 
well known result of McMullen implies that the function  
$Q(P_1,P_2;-):\Q^2_{\geq 0} \longto \Z$, given by 
$Q(P_1, P_2,;{\bf r}) = \#((r_1P_1+r_2P_2)\cap \Z^d)$
 is  a quasi-polynomial of degree $d$ (called the rational 
Ehrhart quasi-polynomial), {\it i.e.}, we have 
$$Q(P_1, P_2,;{\bf r}) = 
\sum_{(l_1, l_2)\in \Z^2_{\geq 0}, l_1+l_2 
\leq d}p_{l_1, l_2}({\bf r})r_1^{l_1}r_2^{l_2}$$
such that $(1)$ for some    $(\tau_1, \tau_2)\in \Q^2_{>0}$ we have 
 that
 $p_{l_1, l_2}(r_1, r_2) = 
p_{l_1, l_2}(r_1+\tau_1, r_2+\tau_2)$, for every 
$ {\bf r} = (r_1, r_2)\in \Q^2_{\geq 0}$
and (2) $p_{l_1,d-l_1}({\bf r})$ is independent of ${\bf r}$.

Here we prove:

\vspace{5pt}
\noindent~{\bf Theorem}~\ref{t1}\quad{\it There exists a decomposition $(0, \tau_1]\times (0,\tau_2] = \bigsqcup_{i=1}^nW_i$, 
 where $W_i$ are  locally closed subsets of $\R^2_{> 0}$  and, for each 
$(l_1, l_2)\in \Z^2_{\geq 0}$,
 there exists   a  set of polynomials
$\{f_{l_1, l_2}^i:W_i\longto \Q\}\}_i$
 such that 
$$p_{l_1,l_2}({\bf r}) = f_{l_1, l_2}^i({\bf r}),~~
\mbox{for every}~~{\bf r}\in W_i\cap \Q^2_{>0}.$$

 In particular,  for all ${\bf r}\in \Q^2_{>0}$, there exist constants
$C_{l_1,l_2}$ such that 
 $$|p_{l_1, l_2}({\bf r})|\leq C_{l_1, l_2},~\mbox{and}~ 
p_{l_1,d-l_1}({\bf r}) = C_{l_1, d-l_1}.$$}
 The proof of Theorem~\ref{t1} is a refinement  of the proof of
Theorem~1.3 of Henk-Linke ([HL]),
where they have proved (in our context) that  the
coefficients $p_{l_1,l_2}(-,-)$ are polynomials
on the interior of the $2$-cells in $\R^2$,
induced by the
hyperplane arrangement (given by the support functions of $P_1$ and $P_2$).
Since such cells do not cover $\R^2$ (or $(0, \tau_1]\times (0, \tau_2]$), and
the complement contains
 line segements,  the boundedness of the coefficients $p_{l_1,l_2}(-,-)$
 cannot be directly obtained from the result of [HL].

Note that their result was proved for Minkowski sums of any finite number of 
polytopes, which can also  be easily refined using similar methods 
(see Remark~\ref{rhl}). The similar result 
for rational Ehrhart  quasi-polynomial
for a single polytope was in an earlier paper of Linke (see Theorem~1.2,
Corollary~1.5 and Theorem~1.6 of [L]).   

Now  the uniform convergence of the 
sequence $\{g_n|_V\}_{n\in \N}$ to $g|_V$, for any compact set 
$V$ of $[0, \infty)\setminus v(\sP_D)$ is straightforward.
Using  the fact that the HK density function  $f_{R, {\bf m}}$
is compactly supported, continuous and (in this  toric case) is a piecewise polynomial function, 
we deduce that 
$$\int_0^{\infty} f_{R, {\bf m}}({\lfloor\lambda q\rfloor}/q)d\lambda =
\int_0^{\infty} f_{R, {\bf m}}(\lambda)d\lambda + O(1/q^2).$$
This and the similar approximation of the integral of the function 
$g_{R, {\bf m}}$  by the integrals of the functions $g_n$, as 
in the result stated above, implies that 
$\int g_{R, {\bf m}}(\lambda)d\lambda $ is the second coefficient of 
the HK function of $(R, {\bf m})$.

The paper is arranged as follows.

In Section~2, we recall Notations and known results about projective 
toric varieties, including a brief review of the density function, 
treated in detail in [MT].

In Section~3 deals with  the results about facets of the compact 
body ${\bar \sP_D}$, for a toric pair $(X, D)$.

Section~4 is an independent section on rational convex polytopes. Here we 
study the coefficients of 
the (multivariate) rational Ehrhart quasi-polynomial and prove they take only 
finitely many polynomial values. Now, for a polytope $P$ and $P_\lambda = 
P\cap \{z=\lambda\}$, we relate the coefficients of  the quasi polynomial 
$i(P_\lambda,n)$, for all $\lambda \in {\R_\geq 0}$ such that 
$\lambda n\in \Z$, 
 with the coefficients of a fixed rational Ehrhart quasi-polynomial of 
Minkowski sum of two polytopes. In particular, we get a uniform bound 
on  the coefficients of 
such $i(P_\lambda, n)$, which is important for the proof in Section~5.

In Section~5 we present the main result about the $\beta$-density function.

In Sections~6 we prove that the $\beta$~-density function is a 
multiplicative function for Segre products of toric pairs. Here we also compute 
the $\beta$-density functions for $(\P^2, -K)$, $(\F_a, cD_1+dD_2)$, where
$-K$ is the anticanonical divisor on $\P^2$, and where $D_1$ and $D_2$ are 
a natural basis for the 
$T$-Cartier divisors  of the Hirzebruch surface $\F_a$.

\section{Hilbert-Kunz density function on projective toric varieties}
Throughout the paper we work over an algebraically closed field $K$ with 
char $p > 0$ and follow the notations from \cite{Ful1993}.
Let  $N$ be  a lattice (which is isomorphic to 
$\Z^n$)
and let  $M = \text{Hom}(N, \Z)$ denote the dual lattice 
with a dual pairing $\langle\ , \rangle$.  
Let $T = \text{\mbox{Spec}}(K[M])$ be the torus with character lattice $M$. 
Let $X(\Delta)$ be a complete toric variety over $K$ with fan 
$\Delta \subset N_\R$.
We recall that the 
$T$-stable irreducible subvarieties of 
codimension $1$ of $X(\Delta)$ correspond to one dimensional cones 
(which are edges/rays of $\Delta $) of $\Delta$. If  
$\tau_1,\ldots, \tau_n$ denote the edges of the fan $\Delta$, then these 
divisors are the orbit closures $D_i= V(\tau_i)$. A base point free 
$T$-Cartier divisor 
$D=\sum_i a_iD_i$ (note that $a_i$ are integers) determines a  
convex lattice polytope in $M_{\mathbb{R}}$ defined by
\begin{equation}\label{*}
P_D=\{u\in M_{\mathbb{R}} \ | \ \langle u, v_i\rangle\geq -a_i 
~~\text{for all}\ 
i\ \}\end{equation}
and the induced embedding of $X(\Delta)$ in $\P^{r-1}$ is given by 
$$\phi=\phi_D: X(\Delta)\to\mathbb{P}^{r-1},\ 
\ x\mapsto ({\chi}^{u_1}(x):\cdots: {\chi}^{u_r}(x)),$$ 
where $P_D\cap M=\{u_1, u_2,\ldots, u_r\}$. 
For $(X(\Delta), D)$ and $P_D$  as above,  consider the cone $\sigma$ 
in $N\times \mathbb{Z}$ whose dual ${\sigma}^{\vee}$ is the cone over 
$P_D\times 1$ 
in $M\times \mathbb{Z}$. Then the affine 
variety $U_{\sigma}$ corresponding to the cone $\sigma$ 
is the affine cone of $X(\Delta)$ in  $\mathbb{A}^r_K$.
Therefore the homogeneous coordinate ring of $X(\Delta)$ (with respect 
to this embedding) is $K[{\chi}^{(u_1,1)},\ldots, {\chi}^{(u_r,1)}]$. 
Note that there is an isomorphism of graded rings (see Proposition 1.1.9, 
\cite{CLS2011})

\begin{equation}\label{**} \frac{K[Y_1,\ldots, Y_r]}{I}\simeq K[{\chi}^{(u_1,1)},
\ldots, {\chi}^{(u_r,1)}] =:R,\end{equation}

where, the kernel $I$ is generated by the binomials  of the form
$$Y_1^{a_1}Y_2^{a_2}\cdots Y_r^{a_r}-Y_1^{b_1}Y_2^{b_2}\cdots Y_r^{b_r}$$
where $a_1,\ldots, a_r, b_1,\ldots, b_r$ are nonnegative integers satisfying 
the equations 
$$a_1u_1+\cdots+a_ru_r=b_1u_1+\cdots+b_ru_r\ \ \text{and}\ \ 
a_1+\cdots+a_r=b_1+\cdots+b_r.$$

Note that due to this isomorphism, we can consider $R= K[S]$ as a standard 
graded ring, where $\deg~\chi^{(u_i, 1)} = 1$.

 Throughout this section we use the following

\begin{notations}\label{n1}
\begin{enumerate}\item 
A {\em toric pair} $(X, D)$ means $X$ is a projective toric variety over a field $K$ 
of charactersitic $p>0$,  with a very ample $T$-Cartier divisor $D$.

\item {\em The polytope  $P_D$ or $P_{X,D}$} is the  
 lattice polytope associated to the given  toric pair $(X, D)$ (as in (\ref{*})).

\item $f_{R, {\bf m}} = HKd_{R,{\bf m}}$ is the {\em HK density function} of $R$ with respect to 
the ideal ${\bf m}$, where 
  $R$ is the associated graded ring with the graded maximal ideal 
${\bf m}$ (as in (\ref{**})).

\item Let $C_D$ denote the convex rational polyhedral cone spanned by 
$P_D\times 1$ in $M_{\mathbb{R}}\times\mathbb{R}$.

\item Let 
$$\mathcal{P_D}=\{p\in C_D\ |\ p\notin (u,1) +C_D,
\text{for every}~~u\in P_D\cap M\}.$$
\item For a set $A\subset M_\mathbb{R}
\simeq \mathbb{R}^{d}$, we denote 
$$A\cap \{z=\lambda\} := A\cap \{({\bf x}, \lambda)\mid 
{\bf x}\in \R^{d-1}\}.$$ 
\end{enumerate}

\end{notations}

\begin{thm}(Theorem 1.1 in [T2])
\label{hkd}
Let $R$ be a standard graded Noetherian ring of dimension $d\geq 2$ 
over an algebraically closed field $K$ of characteristic $p > 0$, and let 
$I\subset R$ be a homogeneous ideal  such that $\ell(R/I) < \infty$.
For $n \in \mathbb{N}$ and $q = p^n$, let $f_n(R,I):[0,\infty)\longto [0,\infty)$
be defined as  
$$f_n(R, I)(x) = \frac{1}{q^{d-1}}\ell(R/I^{[q]})_{\lfloor xq\rfloor}.$$

Then
 $\{f_n(R, I)\}_n$ converges uniformly to a compactly supported 
continuous function\linebreak
$f_{R, I}:[0, \infty)\longto [0, \infty)$, where  $f_{R,I}(x)=
\lim_{n\to\infty} f_n(R,I)(x)$
and
$$e_{HK}(R, I)=\int_0^\infty f_{R, I}(x) \ dx.$$
\end{thm}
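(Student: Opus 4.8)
The plan is to treat the three assertions---uniform compact support, uniform convergence, and the integral formula---in turn, reducing everything to a single uniform bound on the graded pieces of $R/I^{[q]}$ that I would extract from a Koszul presentation. The structural facts come first, with no delicate analysis. Since $\ell(R/I)<\infty$ we have ${\bf m}^{N}\subseteq I$ for some $N$, hence $I^{[q]}\supseteq({\bf m}^{N})^{[q]}\supseteq{\bf m}^{cq}$ with $c$ depending only on $N$ and the number of generators of ${\bf m}$; therefore $(R/I^{[q]})_{t}=0$ for $t\geq cq$, and every $f_{n}$ is supported in $[0,c]$. From the exact sequence $0\to(I^{[q]})_{m}\to R_{m}\to(R/I^{[q]})_{m}\to 0$ and the polynomial growth $\dim_{K}R_{m}=O(m^{d-1})$ one gets $f_{n}(x)\leq C_{1}x^{d-1}$ uniformly in $n$. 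Finally, integrating the step function $f_{n}$ over the intervals $[m/q,(m+1)/q)$ yields
$$\int_{0}^{\infty}f_{n}(x)\,dx=\frac{1}{q^{d}}\sum_{m\geq 0}\ell(R/I^{[q]})_{m}=\frac{1}{q^{d}}\,\ell(R/I^{[q]})\longrightarrow e_{HK}(R,I),$$
so once uniform convergence $f_{n}\to f$ is proved (the limit $f$ being then compactly supported), the identity $\int_{0}^{\infty}f=\lim_{n}\int_{0}^{\infty}f_{n}=e_{HK}(R,I)$ follows automatically, the only remaining issue being continuity of $f$.

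For the convergence, fix homogeneous generators $h_{1},\dots,h_{s}$ of $I$ with $\deg h_{i}=d_{i}$, and consider the Koszul complex $K_{\bullet}=K_{\bullet}(h_{1}^{q},\dots,h_{s}^{q};R)$, whose $i$-th term is $\bigoplus_{|S|=i}R\bigl(-q\textstyle\sum_{j\in S}d_{j}\bigr)$ and whose homology satisfies $H_{0}=R/I^{[q]}$, while $H_{i}$ for $i\geq 1$ is supported at ${\bf m}$---because $h_{1}^{q},\dots,h_{s}^{q}$ generate an ${\bf m}$-primary ideal---hence of finite length. Comparing Hilbert functions in degree $m=\lfloor\lambda q\rfloor$ via the Euler characteristic of $K_{\bullet}$ gives
$$\dim_{K}(R/I^{[q]})_{m}=\sum_{S\subseteq\{1,\dots,s\}}(-1)^{|S|}\dim_{K}R_{\,m-q\sum_{j\in S}d_{j}}\;+\;\sum_{i\geq 1}(-1)^{i-1}\dim_{K}(H_{i})_{m}.$$
Writing $\dim_{K}R_{t}=P_{R}(t)$ (the Hilbert polynomial, of degree $d-1$ and leading term $e(R)t^{d-1}/(d-1)!$) for $t\geq t_{0}$, and $\dim_{K}R_{t}=0$ for $t<0$, and observing that for each $S$ the argument $m-q\sum_{S}d_{j}$ falls in the transition band $[0,t_{0})$ only for $\lambda$ in finitely many intervals of total length $O(1/q)$, one checks that $q^{1-d}$ times the first sum converges uniformly on $[0,\infty)$ to the explicit, continuous, piecewise-polynomial function
$$\lambda\longmapsto\frac{e(R)}{(d-1)!}\sum_{S\subseteq\{1,\dots,s\}}(-1)^{|S|}\Bigl(\lambda-\textstyle\sum_{j\in S}d_{j}\Bigr)_{+}^{\,d-1}.$$
Hence the whole theorem reduces to the claim that $q^{1-d}\dim_{K}(H_{i})_{m}\to 0$ uniformly in $m$, for each $i\geq 1$; and since $q^{1-d}\dim_{K}(R/I^{[q]})_{\lfloor\lambda q\rfloor}$ then converges uniformly to the displayed continuous function, that function must be $f$, so continuity of the limit comes out at the same time.

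The residual claim is the main obstacle. Each $H_{i}$ ($i\geq 1$) has finite length $\ell(H_{i})=O(q^{d-1})$---a standard growth bound for Koszul homology of Frobenius powers, obtainable by a filtration argument on $K_{\bullet}(h^{[q]};R)$, or from the fact that, after rescaling degrees by $1/q$, these homologies are annihilated by a fixed power of ${\bf m}$---and it is concentrated in $O(q)$ consecutive degrees. This is not by itself enough to force each graded piece to be $o(q^{d-1})$; what is needed is a bound $\dim_{K}(H_{i})_{m}\leq C_{2}q^{d-2}$ uniform in $m$, which I would derive from a uniform, linear-in-$q$ bound on the Castelnuovo--Mumford regularity of $H_{i}$---equivalently on that of the modules of syzygies of $h_{1}^{q},\dots,h_{s}^{q}$---using that $H_{i}$ is a subquotient of the free module $K_{i}$ all of whose twists are $O(q)$. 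Proving that passage to Frobenius powers inflates this regularity only linearly is, I expect, the genuinely technical heart of the argument; once it is established, the uniform $o(q^{d-1})$ bound on $\dim_{K}(H_{i})_{m}$, hence the uniform convergence $f_{n}\to f$, and with it the integral formula noted above, all follow.
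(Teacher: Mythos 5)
The theorem you are asked to prove is cited from [T2] and not reproved in the present paper, so there is no in-paper argument to compare against directly; but your proposal has a fatal flaw, and it sits \emph{earlier} than the step you yourself flag as the technical heart. Your passing claim that $\ell(H_i)=O(q^{d-1})$ for $i\ge 1$ (which you describe as a standard growth bound) is false whenever the number $s$ of chosen generators of $I$ exceeds $d$, and this is unavoidable: an ${\bf m}$-primary homogeneous ideal in a $d$-dimensional ring needs at least $d$ generators, with equality only for complete intersections. When $s>d$ the Euler characteristic of $K_\bullet(h_1^q,\dots,h_s^q;R)$ vanishes, so $\ell(R/I^{[q]})=\sum_{i\ge 1}(-1)^{i-1}\ell(H_i)$; since the left side is $\Theta(q^d)$, some $\ell(H_i)$ is $\Theta(q^d)$, and the Koszul homology is not a lower-order correction. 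Your resulting closed formula for the limit is wrong. Take $R=K[x,y]$, $I={\bf m}^2=(x^2,xy,y^2)$, so $d=2$, $s=3$, $d_i=2$, $e(R)=1$. A direct count of degree-$m$ monomials outside $(x^{2q},x^qy^q,y^{2q})$ gives $f_{R,I}(\lambda)=\lambda$ on $[0,2]$, $=6-2\lambda$ on $[2,3]$, $=0$ on $[3,\infty)$, with $\int f_{R,I}=3=e_{HK}(R,I)$; your formula $\lambda_+-3(\lambda-2)_++3(\lambda-4)_+-(\lambda-6)_+$ instead equals $6-2\lambda<0$ on $(3,4)$ and $\lambda-6<0$ on $(4,6)$, and integrates to $0$, not $3$. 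A density function is nonnegative; the discrepancy is exactly $q^{1-d}\sum_{i\ge 1}(-1)^{i-1}\dim_K(H_i)_m$, which therefore has a nonzero limit on a set of $\lambda$ of positive measure. No amount of regularity control can repair this, because you would be trying to prove something false. (The regularity step is also independently suspect, since Castelnuovo--Mumford regularity does not descend to arbitrary subquotients, so the fact that $H_i$ is a subquotient of a free module with twists $O(q)$ does not by itself bound $\mathrm{reg}\,H_i$.)

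What does survive are the three elementary observations at the start: compact support from ${\bf m}^{cq}\subseteq I^{[q]}$, the uniform bound $f_n(x)\le C_1x^{d-1}$, and the Riemann-sum identity $\int_0^\infty f_n\,dx = q^{-d}\,\ell(R/I^{[q]})\to e_{HK}(R,I)$. The argument in [T2] keeps exactly these and replaces your Koszul Euler characteristic by a direct comparison of $f_{n+1}$ against $f_n$, exploiting the graded finite $R$-module structure of $R^{1/p}$ and the resulting filtrations of $R/I^{[pq]}$ by degree-shifted copies of $R/I^{[q]}$, to show that $\{f_n\}$ is uniformly Cauchy; continuity and compact support of the limit then come out of the same estimates, and the integral identity is precisely your Riemann-sum step.
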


We recall Theorem~(1.1) from [MT]: 
\begin{thm}\label{thk}
Let $(X, D)$ be  a toric pair with associated ring  $(R, {\bf m})$, and 
 $P_D$, $C_D$ ${\sP_D}$ as 
in Notations~\ref{n1}. Then the Hilbert-Kunz 
density function of $(R,{\bf m})$ is given by the sectional volume of 
$\overline{\mathcal{P}}_D$, i.e.,  
$$f_{R, {\bf m}}(\lambda) = 
\textnormal{\mbox{rVol}}_{d-1}(\overline{\mathcal{P}}_D\cap \{z=\lambda\}),
\text{ for }\lambda\in [0,\infty).$$
Moreover, $f_{R,{\bf m}}$ is given by a piecewise polynomial function.
\end{thm}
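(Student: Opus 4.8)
The statement to prove is Theorem~\ref{thk}, which identifies the HK density function of the toric ring $(R,{\bf m})$ with the sectional relative volume of the body $\overline{\mathcal P}_D$.

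\medskip

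The plan is to work directly from the definition of the HK density function as the uniform limit of the functions $f_n(R,{\bf m})(\lambda)=q^{-(d-1)}\ell(R/{\bf m}^{[q]})_{\lfloor\lambda q\rfloor}$ given in Theorem~\ref{hkd}, and to compute each graded piece $\ell(R/{\bf m}^{[q]})_{m}$ combinatorially using the toric description of $R=K[S]$ with $S=\{(u,k)\mid u\in kP_D\cap M,\ k\ge 0\}$ (more precisely the semigroup generated by $(u_1,1),\dots,(u_r,1)$). The first step is to identify, for $q=p^e$, the $K$-basis of $(R/{\bf m}^{[q]})_m$ with a set of lattice points: an element $\chi^{(u,m)}\in R_m$ survives in the quotient by ${\bf m}^{[q]}=(\chi^{(u_1,q)},\dots,\chi^{(u_r,q)})$ precisely when $(u,m)$ cannot be written as $(u_i,q)+(\text{something in }S)$ for any $i$, i.e.\ when $(u,m)\notin \bigcup_i\big((qu_i,q)+C_D\big)$ while still lying in the degree-$m$ slice of $C_D$. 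Scaling by $1/q$, this says $(u/q,m/q)$ lies in $C_D$ but not in $(u_i,1)+C_D$ for any $i$ — which is exactly the condition defining $\mathcal P_D$. Hence $\ell(R/{\bf m}^{[q]})_m = \#\big(qC_D\cap\{z=m\}\cap \{\,p : p\notin (qu_i,q)+C_D\ \forall i\,\}\big) = \#\big(q\overline{\mathcal P}_D \cap \{z=m\}\cap (\tfrac1q\Z)^{?}\dots\big)$; more cleanly, $\ell(R/{\bf m}^{[q]})_m = \#\big(\mathcal P_D^{(q)}\cap M\times\{m\}\big)$ for the appropriately rescaled region.

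\medskip

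The second step is the asymptotic count. Having written $\ell(R/{\bf m}^{[q]})_{\lfloor\lambda q\rfloor}$ as the number of lattice points of $M\simeq\Z^{d-1}$ in the slice at height $\lfloor\lambda q\rfloor$ of a region of the form $q\cdot(\text{rescaled }\overline{\mathcal P}_D)$, I would invoke the standard fact that for a bounded region $B\subset\R^{d-1}$ which is a finite boolean combination of rational polytopes, $\#(qB\cap\Z^{d-1}) = \mathrm{vol}_{d-1}(B)q^{d-1}+O(q^{d-2})$, uniformly. The slice of $\overline{\mathcal P}_D$ at height $\lambda$ is $\overline{\mathcal P}_D\cap\{z=\lambda\}$, a bounded region (bounded because, away from the cone point, past each lattice point $u_i$ the translated cone $(u_i,1)+C_D$ covers everything far out in the $z=\lambda$ slice once $\lambda$ is large — here one uses that $P_D$ generates, so finitely many translates of $C_D$ cover $C_D$ above a bounded height, giving compact support). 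Dividing by $q^{d-1}$ gives $f_n(R,{\bf m})(\lambda)\to \mathrm{rVol}_{d-1}(\overline{\mathcal P}_D\cap\{z=\lambda\})$ pointwise; the uniform convergence statement of Theorem~\ref{hkd} identifies this limit with $f_{R,{\bf m}}(\lambda)$, so the two agree. (One must be slightly careful replacing $\lfloor\lambda q\rfloor$ by $\lambda q$ and comparing the integer-height slice with the real-height slice; this is an $O(q^{d-2})$ error since adjacent slices of a bounded rational region differ in lattice-point count by $O(q^{d-2})$.)

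\medskip

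The third step is the piecewise-polynomiality. For this I would note that $\overline{\mathcal P}_D = \bigcup_j P_j\setminus\bigcup E_{j_\nu}$ is a finite boolean combination of rational polytopes (this is essentially the decomposition alluded to in Section~3 of the paper), so $\lambda\mapsto\mathrm{rVol}_{d-1}(\overline{\mathcal P}_D\cap\{z=\lambda\})$ is, by inclusion–exclusion, an alternating sum of functions of the form $\lambda\mapsto\mathrm{vol}_{d-1}(P\cap\{z=\lambda\})$ for rational polytopes $P$. Each such sectional-volume function of a single polytope is a piecewise polynomial in $\lambda$ (the "knots" occurring where the combinatorial type of the slice changes, i.e.\ at $z$-coordinates of vertices of $P$), hence so is the finite combination. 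The main obstacle, and the part requiring genuine care rather than invocation of standard facts, is Step~1 together with establishing compact support: precisely matching the algebraic quotient $R/{\bf m}^{[q]}$ degree-by-degree with the combinatorial region $\mathcal P_D$ (the "survival of $\chi^{(u,m)}$" $\iff$ "$(u,m)$ avoids all translated cones" equivalence, which is where the toric/semigroup structure is really used and where one must be careful about the semigroup $S$ versus the cone $C_D$, i.e.\ saturation/normality of $R$), and showing that the relevant region is bounded so that the $O(q^{d-2})$ error estimates and compact support of $f_{R,{\bf m}}$ hold. Everything after that is the standard Ehrhart/volume asymptotics, which the paper will in any case develop in more refined form in Section~4.
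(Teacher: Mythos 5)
Your proposal reproduces, in outline, the approach of the [MT] proof that the paper cites for this theorem: normality of $R$ identifies $\ell(R/{\bf m}^{[q]})_m$ with $\#(q\mathcal P_D\cap\{z=m\}\cap\Z^d)$, Ehrhart-type volume asymptotics on the polytope decomposition $\overline{\mathcal P}_D=\bigcup_j P_j$ give the limiting sectional volume, and slicing a rational polytope by $\{z=\lambda\}$ yields piecewise polynomiality, so the strategy is essentially the same as the one the paper alludes to. The only point you gloss is the uniformity of the $O(q^{d-2})$ error over the family of slices at heights $\lfloor\lambda q\rfloor/q$: the clean way to make this rigorous, and the one the paper adopts (via the counting function $i(P_j,n,m)$ and Theorem~\ref{tcb}), is to regard the count as a slice of the Ehrhart quasi-polynomial of the fixed $d$-dimensional polytope $P_j$ rather than as an Ehrhart count of a varying $(d-1)$-dimensional region; since Theorem~\ref{hkd} already furnishes uniform convergence of the $f_n$, this pointwise identification of the limit is all that is required.
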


\section{The boundary of $\sP_D$}
Recall that (Notations~\ref{n1}) associated to  a given toric pair $(X, D)$, we have 
a convex 
polytope $P_D$, a convex polyherdral cone $C_D$ and 
a bouned set $\sP_D\subset \R^d$.
In [MT], we had written a decomposition  $C_D = \cup_jF_j$, 
where $F_j's$ are 
$d$-dimensional cones such that, each $P_j := F_j\cap {\bar {\sP_D}}$  
 is a convex rational polytope and is a closure of $P_j':= 
F_j\cap {\sP_D}$. To study the boundary of $\sP_D$ we need a set of lemmas
about the facets of $P_j's$. We also assume without loss of generality that 
$d\geq 3$, as $d=2$ corresponds to $(\P^1, \sO_{\P^1}(n))$, for $n\geq 1$, 
which is easy to handle directly.

\begin{notations}\label{n3} 
\begin{enumerate}
\item $L(P_D) = P_D\cap M =$ the (finite) 
set of lattice points of $P_D$. 
\item For a convex polytope $Q$, let $v(Q) = \{\mbox{vertices of}~Q\}$ and
$F(Q) = \{\mbox{facets of}~Q\}$.
\item  $A(F) = \mbox{the affine hull  of}~ F$ in $\R^d$, for a 
set $F\subseteq \R^d$.
\item For a polytope $F$, $\dim~F := \dim~A(F)$.
\item $F_j \in \{d-\mbox{dimensional cones}\}$, which is the  
 closure of a  connected component of\linebreak 
$C_D\setminus \cup_{iu}H_{iu}$,
where the hyperplanes $H_{iu}$ are given by 
$$H_{iu} = ~~\mbox{the affine span of}~~\{(v_{ik},1), (u,1),
({\bf 0})\mid v_{ik}\in v(C_{0i}),~~u\in P_D\cap M\},$$
where $C_{0i} \in \{(d-3)~\mbox{dimensional faces of}~P_D\}$ and ${\bf 0}$ is the 
origin of $\R^d$.
\item For a subset $A\subseteq C_D$, we denote
\begin{enumerate}
\item $\partial_CA$ = boundary of $A$ in $C_D$  and 
\item $\partial(A)$ = the boundary of $A$ in $\R^d$.
\end{enumerate}
In particular $\partial_CA\subseteq \partial(A)$.
\item For $u\in L(P_D)$, let $C_u = (u,1)+C_D$ and let 
$$P_j'= F_j \cap\cap_{u\in L(P_D)}((u,1) + C_D)^c = 
F_j \cap\cap_{u\in L(P_D)}[C_D\setminus C_u],$$ which is a convex set
(Lemma~{4.5} of [MT]).
\item Let $P_j=\overline{F_j \cap\cap_{u\in L(P_D)}(C_D\setminus C_u)}$  
the convex rational polytope
which is the closure of $P_j'$ in $C_D$ (which equals the closure in $\R^d$). 

\end{enumerate}

Therefore  
$$\mathcal{P}_D=\cup_{j=1}^s P_j'\quad \mbox{and}\quad 
\overline{\mathcal{P}}_D=\cup_{j=1}^s P_j,$$
where $P_1, \ldots, P_s$ are distinct polytopes, whose interiors are disjoint.

Note that 
$$P_j= {\overline{F_j\setminus \cup_{u\in L(P_D)} C_u}} =
{\overline{\cap_{u\in L(P_D)} F_j
\setminus C_u}}.$$

\end{notations}

\begin{lemma}\label{dc}
For each $P_j$ as in Notations~\ref{n3}, we have
\begin{enumerate}
\item $$P_j=P_j'\sqcup \left(\cup_{u\in L(P_D)}\left\{ \partial_C(C_u)\cap P_j\right\}\right).$$
\item For any $u\in L(P_D)$, we have 
$\partial_C(C_u)\cap P_j = 
 \cup_{\{F'\mid F'\in F(C_u),~~F'\not\subseteq \partial(C_D)\}} F'\cap{P_j}$.
\end{enumerate}
\end{lemma}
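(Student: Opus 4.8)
The plan is to prove Lemma~\ref{dc} by a careful case analysis of points in $P_j$ according to whether they lie in the interior $P_j'$ or on the boundary of some translated cone $C_u$, and then to identify the boundary $\partial_C(C_u)$ with its facets that are not contained in $\partial(C_D)$.

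\textbf{Proof of part (1).} First I would recall from Notations~\ref{n3} that $P_j$ is the closure (in $C_D$, equivalently in $\R^d$) of $P_j' = F_j \cap \bigcap_{u\in L(P_D)}(C_D\setminus C_u)$, and that each $C_u = (u,1)+C_D$ is a closed convex cone, so $C_D\setminus C_u$ is the complement of a closed convex set. The key observation is that $P_j$ is obtained from the convex polytope $F_j \cap \bigcap_u \overline{C_D\setminus C_u}$ by noting $\overline{C_D\setminus C_u} = (C_D\setminus C_u)\sqcup(\partial_C(C_u)\cap C_D)$: a point $p\in C_D$ lies in the closure of $C_D\setminus C_u$ iff it is not in the interior of $C_u$ relative to $C_D$. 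Taking intersections over $u$ and with $F_j$, a point $p\in P_j$ either avoids all the $C_u$'s (hence lies in $P_j'$) or lies in $C_u$ for some $u$; in the latter case, since $p$ is in the closure of $P_j'$, it cannot be in the relative interior of $C_u$, so $p\in\partial_C(C_u)$. This gives the disjoint decomposition $P_j = P_j' \sqcup \bigl(\bigcup_{u\in L(P_D)}\{\partial_C(C_u)\cap P_j\}\bigr)$; the union on the right is disjoint from $P_j'$ by construction, while overlaps among the boundary pieces for different $u$ are absorbed into the union. I would need to check the elementary topological fact that $\overline{C_D \setminus C_u} \cap C_D$ is exactly $C_D$ minus the $C_D$-interior of $C_u$, which follows from convexity of $C_u$ and the fact that $C_D$ is full-dimensional.

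\textbf{Proof of part (2).} For the second part, fix $u\in L(P_D)$ and consider $\partial_C(C_u)\cap P_j$. Since $C_u$ is a full-dimensional convex polyhedral cone, its boundary $\partial(C_u)$ in $\R^d$ is the union of its facets $F'\in F(C_u)$. The facets of $C_u$ split into those contained in $\partial(C_D)$ and those not. I claim that $\partial_C(C_u)\cap P_j = \bigcup_{\{F' \in F(C_u),\ F'\not\subseteq\partial(C_D)\}} F'\cap P_j$. The inclusion $\supseteq$ is immediate since any facet $F'$ of $C_u$ lies in $\partial(C_u)$, and a point of $F'$ that also lies in $P_j\subseteq C_D$ and is not forced onto $\partial(C_D)$ lies in $\partial_C(C_u)$ — here one uses that $\partial_C(C_u) = \partial(C_u)\cap C_D$ minus the part of $\partial(C_u)$ that is "interior from the $C_D$ side", which is precisely the part lying along $\partial(C_D)$. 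For $\subseteq$, take $p\in\partial_C(C_u)\cap P_j$; then $p\in\partial(C_u)$, so $p$ lies on some facet $F'$ of $C_u$. If that facet were contained in $\partial(C_D)$, then $p\in\partial(C_D)$, and I would argue that such a point cannot be a boundary point of $C_u$ \emph{within} $C_D$ unless it also lies on another facet of $C_u$ not contained in $\partial(C_D)$ — geometrically, near $p$ the cone $C_D$ lies on one side of the hyperplane $A(F')$, so crossing $F'$ does not leave $C_D$, hence $F'$ alone does not contribute to $\partial_C(C_u)$. Thus $p$ lies on a facet $F'$ with $F'\not\subseteq\partial(C_D)$, giving the reverse inclusion.

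\textbf{Main obstacle.} The main delicate point is the interplay between the boundary taken in $C_D$ versus in $\R^d$: I expect the real work is in verifying, for part (2), that a facet of $C_u$ lying inside $\partial(C_D)$ never contributes to $\partial_C(C_u)$, which requires knowing that $C_D$ lies locally on one side of the supporting hyperplane of such a facet. This should follow from the fact that $C_u = (u,1)+C_D$ is a translate of $C_D$ and any facet of $C_u$ contained in $\partial(C_D)$ must be supported by a hyperplane that also supports $C_D$ (both cones having the same recession behavior in those directions), so $C_D$ and $C_u$ lie on the same side; hence moving off that facet keeps one inside $C_D$ and such a facet is "invisible" to $\partial_C$. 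The convexity of all the sets involved, together with full-dimensionality of $C_D$ and $C_u$, makes these local statements routine but they do need to be spelled out.
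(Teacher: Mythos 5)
Your proof of part (1) is correct and matches the paper's own argument in spirit: the paper works with the single set $U=\bigcap_{u}(C_D\setminus C_u)$ and the containment $\overline{F_j\cap U}\subseteq (F_j\cap U)\cup(\partial_C U\cap\overline{F_j\cap U})$, together with $\partial_C U\subseteq\bigcup_u\partial_C(C_u)$, while you run the equivalent pointwise case split; both are the same closure/boundary bookkeeping.

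For part (2) the paper offers no proof ("we leave this to the reader"), so your sketch is the only one to assess, and it correctly isolates the genuine content, namely the $\subseteq$ direction: if every facet of $C_u$ through a point $p\in\partial_C(C_u)\cap P_j$ lay in $\partial(C_D)$, then each such facet's affine hull would be a supporting hyperplane of $C_D$, forcing $C_D$ and $C_u$ onto the same local half-space and hence $C_u=C_D$ near $p$, contradicting $p\in\partial_C(C_u)$. That is the right argument. One small remark: your $\supseteq$ direction is over-engineered and its justification (``$\partial_C(C_u)=\partial(C_u)\cap C_D$ minus the part lying along $\partial(C_D)$'') is not quite accurate as stated. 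In fact $\supseteq$ is immediate and does not use the hypothesis $F'\not\subseteq\partial(C_D)$ at all: if $p\in F'\cap P_j$ then $p\in C_u$, and $p\in P_j=\overline{P'_j}$ with $P'_j\cap C_u=\varnothing$ shows $p$ is a limit of points of $C_D$ outside $C_u$, hence $p\notin\mathrm{int}_{C_D}(C_u)$, hence $p\in\partial_C(C_u)$. The restriction to facets not contained in $\partial(C_D)$ is only needed to make the union on the right small enough for $\subseteq$, which you handle correctly.
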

\begin{proof}(1)~We only need to prove that 
$P_j\subseteq P_j'\bigsqcup_u\left\{\partial_C(C_u)\cap P_j\right\},$
as the other way inclusion is obvious.

Let $U= \cap _{u\in P_D\cap \Z^{d-1}} \left[C_D\setminus C_u\right]$. Then 
$P_j'= F_j\cap U$ and $P_j = {\overline{F_j\cap U}}$.  
It is easy to check that 
${\overline{F_j\cap U}} \subseteq (F_j\cap U)\cup (\partial_CU\cap 
{\overline{F_j\cap U}})$.
Now $$\partial_CU\subseteq \cup_u\partial_C(C_u) \implies 
(\partial_CU)\cap {\overline{F_j\cap U}}\subseteq 
\cup_{u\in L(P_D)}\left\{\partial_C(C_u)\cap P_j\right\}.$$
 This proves Assertion~(1).

\vspace{5pt}

\noindent(2)~We leave  this to the  reader.
This proves the lemma.\end{proof}

\begin{lemma}\label{l21}For any facet $F\in F(P_j)$ we have one 
and only one of the following possibilities:
\begin{enumerate}
\item $F\subset F_{j_i}$, for some facet $F_{j_i} \in F(F_j)$:  
(i)~In this case  $F= A(F_{j_i})\cap P_j = F_{j_i}\cap P_j$ and 
$$(ii)~~\dim~\left[A(F)\cap (\cup_{u\in L(P_D)}\partial_C(C_u)\cap P_j)\right]
\leq d-2.$$
\item $F\subset F_{u_\nu}$, for some facet $F_{u_\nu}\in F(C_u)$ and
$u\in L(P_D)$. In this case 
$F = P_j\cap F_{u_\nu} = P_j\cap A(F_{u_\nu})$, where $F_{u_\nu}\not\subseteq \partial(C_D)$.
\end{enumerate}
\end{lemma}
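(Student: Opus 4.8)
The plan is to analyze a facet $F$ of the rational polytope $P_j = \overline{\cap_{u\in L(P_D)}(F_j\setminus C_u)}$ by recalling that $P_j$ is cut out as an intersection of half-spaces: those coming from the facets of the cone $F_j$, and those coming from the facets $F_{u_\nu}$ of the translated cones $C_u = (u,1)+C_D$ for $u\in L(P_D)$ (only the ``inward-facing'' ones, i.e. those $F_{u_\nu}\not\subseteq\partial(C_D)$, can actually bound $P_j$ from inside). Since every facet of a polytope written as a bounded intersection of half-spaces must lie in one of the defining hyperplanes (after discarding redundant ones), $F$ lies either in some $A(F_{j_i})$ with $F_{j_i}\in F(F_j)$, or in some $A(F_{u_\nu})$. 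First I would verify that these two cases are mutually exclusive: if $A(F)$ equalled both a supporting hyperplane of $F_j$ at a facet and a supporting hyperplane $A(F_{u_\nu})$, then near a relative-interior point of $F$ the body $P_j$ would have to lie on one side of each, forcing $F$ to have dimension $\le d-2$, contradicting $\dim F = d-1$; so exactly one of (1), (2) holds.

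In case (1), I would show $F = A(F_{j_i})\cap P_j = F_{j_i}\cap P_j$. The inclusion $F\subseteq A(F_{j_i})\cap P_j$ is clear; for the reverse, note that $A(F_{j_i})\cap P_j$ is a face of $P_j$ of dimension $\le d-1$ containing the facet $F$, hence equals $F$. For the refinement $A(F_{j_i})\cap P_j = F_{j_i}\cap P_j$, use that $F_{j_i}$ is itself a facet of the cone $F_j$ and $P_j\subseteq F_j$, so $A(F_{j_i})\cap F_j = F_{j_i}$; intersecting with $P_j$ gives the claim. For part (1)(ii), the point is that $\partial_C(C_u)\cap P_j$ is, by Lemma~\ref{dc}(2), a union of sets $F_{u_\nu}\cap P_j$ with $F_{u_\nu}\not\subseteq\partial(C_D)$; if $A(F)\cap(F_{u_\nu}\cap P_j)$ had dimension $d-1$ it would be a common facet of $P_j$ lying in both $A(F_{j_i})$ and $A(F_{u_\nu})$, which is precisely the situation excluded by the mutual-exclusivity argument above (it would put $F$ in case (2) as well). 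Hence that intersection has dimension $\le d-2$.

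In case (2), the argument is symmetric: $A(F_{u_\nu})$ is a supporting hyperplane of $P_j$ along the facet $F$, so $F = A(F_{u_\nu})\cap P_j$, and since $F_{u_\nu}$ is a facet of the cone $C_u$ with $P_j\subseteq C_D$ meeting the relative interior of $F_{u_\nu}$, one gets $A(F_{u_\nu})\cap C_u \cap (\text{the relevant region}) = F_{u_\nu}\cap(\cdots)$, whence $F = P_j\cap F_{u_\nu}$; the condition $F_{u_\nu}\not\subseteq\partial(C_D)$ is automatic because, by Lemma~\ref{dc}(2), only such facets contribute to $\partial_C(C_u)\cap P_j$, and if $F_{u_\nu}\subseteq\partial(C_D)$ then $A(F_{u_\nu})\cap P_j\subseteq\partial(C_D)$ would be cut out by a facet hyperplane of $F_j$ instead, returning us to case (1). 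The main obstacle I anticipate is the bookkeeping in the mutual-exclusivity / dimension-count step: one must be careful that a hyperplane can \emph{a priori} be simultaneously of ``$F_j$-type'' and ``$C_u$-type'' only along a set of dimension $\le d-2$, which requires using that $F_j$ is a full-dimensional cone and that the relative interior of a facet $F$ is genuinely $(d-1)$-dimensional and lies in the closure of $P_j'$; the rest is a routine translation of ``facet of an $H$-polytope lies in a defining hyperplane''.
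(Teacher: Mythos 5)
Your overall plan --- decompose $\partial P_j$ into pieces coming from $\partial F_j$ and the $\partial C_u$, and use a dimension count to conclude $A(F)$ coincides with some $A(F_{j_i})$ or some $A(F_{u_\nu})$ --- is the same opening move as the paper's proof. But there are two substantive gaps that leave the harder half of the lemma unproved.

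\textbf{Mutual exclusivity and part (1)(ii).} You argue that if $A(F)$ were simultaneously a supporting hyperplane of $F_j$ at a facet and equal to some $A(F_{u_\nu})$, then ``$P_j$ would have to lie on one side of each, forcing $F$ to have dimension $\leq d-2$.'' This does not follow: when the two hyperplanes \emph{coincide}, ``lying on one side of each'' is a single half-space constraint, and the facet remains $(d-1)$-dimensional; there is no dimension drop to appeal to. The paper's argument is of a completely different nature and uses the cone geometry in an essential way: $A(F_{j_i})$ is the affine hull of a facet of the cone $F_j$ with apex at the origin, hence a linear hyperplane through $\mathbf 0$; meanwhile $A(F_{u_\nu}) = (u,1)+A(F')$ for a facet $F'$ of $C_D$, so equality forces $(u,1)\in A(F')$ and hence $A(F_{u_\nu}) = A(F')$, which gives $F_{u_\nu}\subseteq A(F')\cap C_D = F'\subseteq\partial(C_D)$. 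This contradicts the fact (Lemma~\ref{dc}(2)) that only facets $F_{u_\nu}\not\subseteq\partial(C_D)$ contribute to $\partial_C(C_u)\cap P_j$. Your proposal never invokes the fact that $F_j$ is a cone with apex at the origin, and without it the exclusivity argument has no mechanism.

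\textbf{Case (2): getting $F = P_j\cap F_{u_\nu}$ rather than just $F = P_j\cap A(F_{u_\nu})$.} The identity $F = A(F_{u_\nu})\cap P_j$ is routine (supporting hyperplane of a polytope at a facet). What is genuinely nontrivial is that $A(F_{u_\nu})\cap P_j$ does not leave the facet $F_{u_\nu}$ of $C_u$; a priori $A(F_{u_\nu})\cap P_j$ could wander past $\partial F_{u_\nu}$ into points of the hyperplane outside $C_u$. Your ``one gets $A(F_{u_\nu})\cap C_u\cap(\text{the relevant region}) = F_{u_\nu}\cap(\cdots)$'' is exactly the assertion that needs proof, and merely observing that $P_j$ meets the relative interior of $F_{u_\nu}$ does not yield it (think of a chord of a polygon crossing the endpoint of a ray). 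The paper handles this by first proving the Claim that if $F''\in F(C_u)$ meets $F_j^o$ then $F''\cap F_j = A(F'')\cap F_j$, and the proof of that Claim hinges on the convexity of $F_j\setminus C_u$ (Lemma~4.5 of [MT]) --- an input that is absent from your proposal. Finally, your justification that $F_{u_\nu}\not\subseteq\partial(C_D)$ is ``automatic'' is circular as phrased; the paper derives it from $F\cap F_{u_\nu}\subset P_j\setminus P_j'$ together with Lemma~\ref{dc}, not from a return to case~(1).
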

\begin{proof}Note $P_j' = F_j \cap_{u\in L(P_D)}\left[C_D\setminus 
C_u\right].$
Therefore 
$$F\subset \partial(P_j)\subseteq \partial(F_j)\cup\cup_{u\in 
L(P_D)}\partial[C_u] = 
\cup_{F_{j_i}\in F(F_j)}F_{j_i}\cup\cup_{u\in L(P_D)}
\cup_{F_{u_\nu}\in F(C_u)} F_{u_\nu}.$$
This implies $d-1 = \dim~F = \max\{\dim(F_{j_i}\cap F), 
\dim(F_{u_\nu}\cap F)\}_{j_i, u_\nu}.$  
Hence at least one of the sets, 
 $F_{j_i}\cap F$ or $F_{u_\nu}\cap F$, for some $j_i$ or $u_\nu$,  is of 
dimension $d-1$. 
 This implies either $A(F) = A(F_{j_i})$ or 
$A(F) = A(F_{u_\nu})$. 

\vspace{5pt}

\noindent{(1)}~~Let $F\in F(P_j)$ such that $F\subseteq A(F_{j_i})$, for some $j_i$.
Then $F = P_j\cap A(F_{j_i}) = 
P_j\cap F_j\cap A(F_{j_i}) = P_j\cap F_{j_i}$. In particular $F\subseteq F_{j_i}$. This 
proves Assertion~(1)~(i).

Suppose given $F\in F(P_j)$ such that $F\subseteq A(F_{j_i})$ and
$\dim~\left[A(F)\cap (\partial_C(C_u)\cap P_j)\right]= d-1$, for some $u\in L(P_D)$. Then 
$\dim~(A(F)\cap F_{u_\nu}\cap P_j) = d-1$, for some  $F_{u_\nu}\in F(C_u)$
 such that 
$F_{u_\nu}\not\subseteq \partial(C_D)$. Then $A(F) = A(F_{u_\nu}) = A(F_{j_i})$,
where $A(F_{j_i})$ is
a hyperplane passing through the origin ${\bf 0}$ of $\R^d$, 
and therefore $A(F)$ is a vector subspace of $\R^d$.
Also $F = (u, 1)+F'$, for some $F'\in F(C_D)$. Therefore 
$A(F) = (u,1) +A(F')$. Hence   $(u,1) + {\bf y} = {\bf 0}$, 
for some ${\bf y}\in A(F')$, 
which implies $A(F) = A(F')$. Therefore $F_{u_\nu}\subseteq A(F)\cap 
C_D = F'\subseteq
\partial(C_D)$, which is a contradiction.
This implies (1)~(ii) and hence the first assertion.

\vspace{5pt}

\noindent{(2)}~~We first prove the following 

\noindent{\bf Claim}~For any $F_j$ and for a facet $F''\in F(C_{u})$, where $u\in L(P_D)$, if 
$F''\cap F_j^o \neq\phi$ then $F''\cap F_j = A(F'')\cap F_j$, where 
$F_j^o = F_j\setminus \partial(F_j)$.

\vspace{5pt}

\noindent{\underline{Proof of the claim}}: Recall (see Lemma~4.5 of [MT]) 
that $F_j\setminus C_u$ is a convex set, for any $u\in L(P_D)$.

If $(A(F'')\cap F_j) 
\setminus F''\cap F_j\neq \phi$ then 
 there exists $x\in (\partial F'')\cap F_j^o$ and an open set  
(in $\R^d$) $B_x \subseteq F_j^o$ such that $B_x\cap C_{u_1} 
\cap F_j^o\neq \phi$. Hence there is another facet $F'\in F(C_{u_1})$ such that 
$F'\cap F_j^o\neq \phi$ ($x\in \partial(F')\cap 
F_j^o$). 

Note that $\dim(F'\cap F_j^o) = \dim~(F''\cap F_j^o) = d-1$ and 
$\dim~(F'\cap F'') \leq d-2$. Hence  we choose $x_1\in F''\cap F_j^o$ and 
$x_2 \in F'\cap F_j^o$ such that $x_1\neq x_2$. Then $tx_1+(1-t)x_2\subseteq F_j\cap 
C_{u_1}$, for $0\leq t\leq 1$.
Now we can also choose small enough nbhds (open in $\R^d$) 
$B_{x_1}$ and $B_{x_2}$ of $x_1$ and $x_2$ respectively, which are contained in 
$\mbox~F_j$.  Let $L$ be the affine line through  $x_1$ and $x_2$. Now, the line segment
of $L$ with end points 
$x_1'\in B_{x_1}\cap L\cap C_{u_1}^c$ and $x_2'\in B_{x_2}\cap 
L\cap C_{u_1}^c$, passes through $C_{u_1}$, which contradicts the 
convexity property of $F_j\setminus C_{u_1}$.
Hence the claim.

Suppose $F\in F(P_j)$ such that $F\not\subseteq A(F_{j_i})$, for any $F_{j_i}\in F(F_j)$.
Then there exists $F_{u_\nu}\in F(C_u)$, for some $u\in L(P_D)$, such that 
$\dim~(F\cap F_{u_\nu}) = d-1$. This implies $A(F) = A(F_{u_\nu})$ 
and therefore $F_{u_\nu}\not\subseteq A(F_{j_i})$, for any $F_{j_i}$.
On the other hand $F\cap F_{u_\nu}\subseteq F_j$. Hence
 $F_{u_\nu}
\cap F_j^o\neq \phi$, which implies, by the above claim that $A(F_{u_\nu})\cap F_j 
= F_{u_\nu}\cap F_j$.
Therefore 
$$F\subseteq  A(F_{u_\nu})\cap P_j\cap F_j = F_{u_\nu}\cap F_j\cap P_j = 
F_{u_\nu}\cap P_j \subseteq F.$$

Moreover, by definition $F\cap F_{u_\nu} \subset P_j\setminus P_j'$, therefore, 
by Lemma~\ref{dc}, $F_{u_\nu}\not\subseteq \partial(C_D)$.
 This proves the second 
assertion  and hence the lemma.\end{proof}

\begin{lemma}\label{l6}If $P_i\neq P_j$ then  $\dim(P_i\cap P_j) \leq d-1$. 
Moreover, if  $\dim(P_i\cap P_j) = d-1$ then $P_i\cap P_j \in 
F(P_i) \cap F(P_j)$, that is, $P_i$ and $P_j$ meet along a common facet.
\end{lemma}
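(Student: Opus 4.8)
The plan is to prove Lemma~\ref{l6} by reducing everything to the two convex polytopes $P_i$ and $P_j$ and their ambient $d$-dimensional cones $F_i, F_j$, and then invoking the structure of facets established in Lemma~\ref{l21}. The starting point is that $P_i$ and $P_j$ are distinct polytopes whose interiors are disjoint (Notations~\ref{n3}); since both are $d$-dimensional convex bodies in $\R^d$ with $P_i^o\cap P_j^o=\emptyset$, a standard separation argument gives $P_i\cap P_j\subseteq \partial(P_i)\cap \partial(P_j)$, and hence $\dim(P_i\cap P_j)\leq d-1$. This disposes of the first assertion with essentially no work.

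For the second assertion, suppose $\dim(P_i\cap P_j)=d-1$. Then $P_i\cap P_j$ spans a hyperplane $H=A(P_i\cap P_j)$, and since $P_i\cap P_j\subseteq \partial(P_i)$ is a $(d-1)$-dimensional convex subset of the boundary of the $d$-polytope $P_i$, it must be contained in a single facet $F\in F(P_i)$ with $A(F)=H$ (a $(d-1)$-dimensional convex subset of $\partial(P_i)$ cannot meet the interiors of two distinct facets, as their affine hulls are distinct hyperplanes). Symmetrically $P_i\cap P_j$ lies in a facet $G\in F(P_j)$ with $A(G)=H$ as well. The goal is then to upgrade the inclusions $P_i\cap P_j\subseteq F$ and $P_i\cap P_j\subseteq G$ to equalities $F=P_i\cap P_j=G$.

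The key step is to identify $F$ and $G$ via Lemma~\ref{l21}. First I would rule out the possibility that $F$ is of type (1), i.e. $F\subseteq F_{i_k}$ for a facet $F_{i_k}$ of the cone $F_i$. Recall $F_i$ and $F_j$ are closures of distinct connected components of $C_D\setminus\cup_{iu}H_{iu}$; if $F$ lay in a facet of $F_i$, its affine hull $H$ would be one of the separating hyperplanes $H_{iu}$ (which passes through the origin), and then $P_i\cap P_j$ would sit in $H\cap F_i\cap F_j$, which is a face of dimension $\leq d-1$ common to the two cones — but one then checks that $H\cap F_j$ is a \emph{facet} of $F_j$ as well, so $G$ would be of type (1) too, and both $P_i$ and $P_j$ lie on the \emph{same} side of $H$ near $F$, contradicting that they come from distinct components. (Alternatively, and more robustly, since $P_i\cap P_j\subseteq \cap_{u\in L(P_D)}[C_D\setminus C_u]$ has nonempty relative interior inside $H$, points just off $H$ on either side and inside $C_D$ would still lie in $\cap_u[C_D\setminus C_u]$, forcing both $F_i$ and $F_j$ — hence a single $P$ — to contain a neighborhood, contradicting $P_i\neq P_j$.) So $F$ must be of type (2): $F=P_i\cap F_{u_\nu}=P_i\cap A(F_{u_\nu})$ for some $u\in L(P_D)$ and facet $F_{u_\nu}$ of $C_u$ with $F_{u_\nu}\not\subseteq\partial(C_D)$, and $H=A(F_{u_\nu})$. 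The same analysis applies to $G$, giving $G=P_j\cap F_{u'_{\nu'}}=P_j\cap A(F_{u'_{\nu'}})$ with $A(F_{u'_{\nu'}})=H$.

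The final step is to combine these: since $A(F_{u_\nu})=H=A(F_{u'_{\nu'}})$, and these are hyperplanes bounding the translated cones $C_u$ and $C_{u'}$, I would argue that the $(d-1)$-dimensional piece $P_i\cap P_j$ lies in $H$ and that a neighborhood (within $H$) of any relative-interior point $x$ of $P_i\cap P_j$ is covered, on the $F_i$-side, by $P_i$ and, on the $F_j$-side, by $P_j$ — using that $F_i\cup F_j$ contains a full $\R^d$-neighborhood of $x$ (as $x$ is interior to $\cup_j F_j=C_D$, it is not on any $H_{iu}$ except possibly... — here one uses that $x\notin\partial(C_D)$ since $F_{u_\nu}\not\subseteq\partial(C_D)$, and that the separating hyperplanes $H_{iu}$ through such a point are transverse to $H$ or equal to it; the equal case was excluded above). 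Then $F=P_i\cap H$ and $G=P_j\cap H$ each contain this neighborhood and are contained in the $d-1$ slice $P_i\cap H=P_j\cap H=P_i\cap P_j\cap H$, forcing $F=P_i\cap P_j=G$, which is exactly the claim that $P_i$ and $P_j$ meet along a common facet.

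\medskip

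\noindent\textbf{The main obstacle} I anticipate is the careful bookkeeping in the last step: showing that the $(d-1)$-dimensional overlap $P_i\cap P_j$ exhausts \emph{both} facets rather than being a proper subface of one of them. The subtlety is that a priori $F\in F(P_i)$ could be strictly larger than $P_i\cap P_j$, with the rest of $F$ poking into the region governed by some third polytope $P_k$. Ruling this out requires using that the relative interior of $P_i\cap P_j$ is $(d-1)$-dimensional together with the local convexity structure of $\cap_{u\in L(P_D)}[C_D\setminus C_u]$ near such points — precisely the kind of neighborhood argument used in the claim inside the proof of Lemma~\ref{l21}. I would expect to borrow that argument more or less verbatim.
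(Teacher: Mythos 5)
Your proof of the first assertion is fine; in fact the paper gets it even more directly from the trivial inclusion $P_i\cap P_j\subseteq F_i\cap F_j$ (the cones $F_i\neq F_j$ are closures of distinct connected components, so $\dim(F_i\cap F_j)\leq d-1$), but the disjoint-interiors argument you give is also valid.

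The second assertion, however, goes wrong at the type classification. You attempt to rule out that the facet $F\in F(P_i)$ containing $P_i\cap P_j$ is of type~(1) in the sense of Lemma~\ref{l21}, and then proceed under the assumption that $F$ is of type~(2). But type~(1) is precisely what occurs here, and it is unavoidable: since $P_i\cap P_j\subseteq F_i\cap F_j$ and $\dim(P_i\cap P_j)=d-1$ while $\dim(F_i\cap F_j)\leq d-1$, the set $F_i\cap F_j$ is a common \emph{facet} of the cones $F_i$ and $F_j$. Consequently the hyperplane $H=A(P_i\cap P_j)$ equals $A(F_i\cap F_j)\in\{A(F_{i_k}):F_{i_k}\in F(F_i)\}$, so by Lemma~\ref{l21} the facet of $P_i$ with affine hull $H$ is $P_i\cap A(F_{i_k})=P_i\cap F_{i_k}$, which is of type~(1). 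Your argument against type~(1) asserts that ``both $P_i$ and $P_j$ lie on the same side of $H$ near $F$,'' but this is backwards: since $F_i$ and $F_j$ are adjacent cells meeting along a wall contained in $H$, they (and hence $P_i$ and $P_j$) lie on \emph{opposite} sides of $H$, and no contradiction arises. Your alternative argument (``forcing both $F_i$ and $F_j$ --- hence a single $P$ --- to contain a neighborhood'') is also unsound: having $F_i$ and $F_j$ jointly fill an $\R^d$-neighborhood of a relative-interior point is entirely consistent with $P_i\neq P_j$; it does not collapse them into a single polytope.

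The paper's own proof avoids all of this. Once one knows $F:=F_i\cap F_j\in F(F_i)\cap F(F_j)$, one has $F_i\cap A(F)=F_j\cap A(F)=F$, and then directly from $P_i=\overline{F_i\cap U}$, $P_j=\overline{F_j\cap U}$ (with $U=\cap_{u\in L(P_D)}[C_D\setminus C_u]$) one computes
$P_i\cap A(F)=\overline{F_i\cap U}\cap A(F)=\overline{F_i\cap U\cap A(F)}=\overline{F\cap U}=P_j\cap A(F)$.
This exhibits $P_i\cap P_j$ as $P_i\cap A(F)=P_j\cap A(F)$, which is a facet of each since $A(F)$ is a supporting hyperplane of both $P_i$ and $P_j$. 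No appeal to Lemma~\ref{l21}'s type dichotomy, and no local neighborhood bookkeeping, is needed.
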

\begin{proof}We know $P_i\cap P_j \subseteq F_i\cap F_j$, where 
$\dim~(F_i\cap F_j) \leq d-1$. If
$\dim~(P_i\cap P_j) = d-1$ then $F_i\cap F_j\in F(F_i)\cap F(F_j)$. 
Let $F = F_i\cap F_j$ then 
$F_i\cap A(F) = F_j \cap A(F) = F$. But $P_i = {\overline{F_i\cap U}}$ and 
$P_j = {\overline{F_j\cap U}}$. Therefore $P_i\cap A(F) = 
{\overline{F_i\cap U}}\cap A(F)=  {\overline{F_i\cap U\cap A(F)}} 
= {\overline{F\cap U}} = 
 P_j\cap A(F)$.
This proves the lemma.
\end{proof}

\begin{lemma}\label{l7}$$ \begin{array}{lcl} 
(1)~~~\partial(\sP_D) & = & 
\cup_{\{F\in F(P_j)\mid F\neq P_i\cap P_j\}} F.~~\mbox{In particular}\\\\
(2)~~~\partial(\sP_D) & = & \bigcup_{\{ F\in F(C_D)\}}F\cap {\bar \sP_D} \cup 
\bigcup_{\{F\in F(C_u), u\in L(P_D)\}}F\cap {\bar \sP_D}.
\end{array}$$
\end{lemma}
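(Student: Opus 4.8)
The plan is to bootstrap from the combinatorial description of $\partial(\sP_D)$ already assembled in Lemmas~\ref{dc}--\ref{l6}. First I would prove (1). Since $\overline{\sP_D} = \cup_{j=1}^s P_j$ is a finite union of $d$-dimensional polytopes with disjoint interiors, a point lies on $\partial(\sP_D)$ iff it lies on $\partial(P_j)$ for some $j$ but is not an interior point of $\overline{\sP_D}$; hence $\partial(\sP_D) \subseteq \cup_j \partial(P_j) = \cup_j \cup_{F\in F(P_j)} F$. So every facet $F\in F(P_j)$ either lies in $\partial(\sP_D)$ or lies in the interior of $\overline{\sP_D}$. A facet $F\in F(P_j)$ lies in the interior of $\overline{\sP_D}$ precisely when it is shared with a neighbouring polytope $P_i$, i.e.\ when $F = P_i\cap P_j$ for some $i\neq j$: indeed, if $F$ is shared, points just off $F$ on either side lie in $P_j$ or $P_i$, so $F$ is interior; conversely, if $F\subseteq\partial(\sP_D)$ and some interior point $x$ of $F$ were also interior to $\overline{\sP_D}$, a small ball around $x$ would be covered by finitely many $P_i$, forcing a second polytope to meet $F$ in dimension $d-1$, and then by Lemma~\ref{l6} that polytope meets $P_j$ along the common facet $F$ --- contradiction. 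This dichotomy, together with the fact (Lemma~\ref{l6}) that any $d-1$ dimensional intersection $P_i\cap P_j$ is actually a common facet, gives exactly $\partial(\sP_D) = \cup_{\{F\in F(P_j)\mid F\neq P_i\cap P_j\}} F$.

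Next I would deduce (2) from (1) by substituting the classification of facets of $P_j$ from Lemma~\ref{l21}. Every $F\in F(P_j)$ is of one of two types: type~(1), where $F = F_{j_i}\cap P_j$ for a facet $F_{j_i}\in F(F_j)$ lying on a wall of the cone decomposition; or type~(2), where $F = P_j\cap F_{u_\nu}$ for a facet $F_{u_\nu}\in F(C_u)$ with $F_{u_\nu}\not\subseteq\partial(C_D)$, $u\in L(P_D)$. For type~(2) facets, $F = F_{u_\nu}\cap P_j \subseteq F_{u_\nu}\cap\overline{\sP_D}$, and these contribute to the second union in (2). For type~(1) facets, I need to show the ones surviving in (1) (i.e.\ not of the form $P_i\cap P_j$) are exactly those lying on $F(C_D)$: a type~(1) facet $F$ lies on a wall $A(F_{j_i})$ of the fan-cone decomposition; if that wall is an interior wall of $C_D$, the two cones $F_i, F_j$ abutting it contribute polytopes meeting along $F$ (using Lemma~\ref{l6} and the fact that $P_j = \overline{F_j\cap U}$ with $U = \cap_u(C_D\setminus C_u)$ the \emph{same} set for both), so $F = P_i\cap P_j$ is discarded; if instead $A(F_{j_i})$ supports a facet of $C_D$ itself, there is no cone on the far side, so $F\subseteq\partial(C_D)$ and $F = F_{j_i}\cap P_j\subseteq F'\cap\overline{\sP_D}$ for the facet $F'\in F(C_D)$ containing $F_{j_i}$, contributing to the first union in (2). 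Conversely every $F'\cap\overline{\sP_D}$ with $F'\in F(C_D)$ and every $F_{u_\nu}\cap\overline{\sP_D}$ with $F_{u_\nu}\not\subseteq\partial(C_D)$ is visibly contained in $\partial(\sP_D)$, since near such points $\overline{\sP_D}$ lies on one side of the bounding hyperplane (recall $\sP_D\subseteq C_D$ and $\sP_D\subseteq C_D\setminus C_u$), giving the reverse inclusion.

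The step I expect to be the main obstacle is the careful bookkeeping in (2) identifying \emph{which} type~(1) facets get cancelled as shared facets $P_i\cap P_j$ versus which survive on $\partial(C_D)$ --- this requires knowing that an interior wall of the cone decomposition $C_D = \cup_j F_j$ always has a cone $F_i$ on its other side and that the clipping set $U$ is genuinely common to neighbouring $P_i$ and $P_j$, so that the Lemma~\ref{l6} argument applies to produce a shared facet. One subtlety: a type~(1) facet $F\subseteq A(F_{j_i})$ could in principle coincide, as a set, with a type~(2) facet lying on some $A(F_{u_\nu})$; part~(1)(ii) of Lemma~\ref{l21} rules this out by forcing $\dim(A(F)\cap\cup_u\partial_C(C_u)\cap P_j)\leq d-2$, so the two families of facets are genuinely disjoint in the decomposition and no facet is miscounted. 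Once these points are nailed down, (2) follows by collecting terms, and the "In particular" is justified.
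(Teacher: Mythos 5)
Your proof is correct and follows essentially the same route as the paper: show $\partial(\sP_D) = V := \bigcup\{F\in F(P_j)\mid F\neq P_i\cap P_j\}$ by observing that relative interiors of shared facets are interior to $\overline{\sP_D}$ (via Lemma~\ref{l6}) while unshared facets are boundary, and then translate $V$ into the statement of (2) using the two-case classification of facets in Lemma~\ref{l21}. The paper phrases the inclusion $\partial(\sP_D)\subseteq V$ as a proof by contradiction using a $(d-1)$-ball $B_{d-1}$ inside a shared facet, where you argue pointwise; and you make explicit the reverse inclusion $V\subseteq\partial(\sP_D)$ and the bookkeeping for part~(2) (which interior cone-walls yield $P_i\cap P_j$), both of which the paper treats as immediate --- but the underlying mechanism is identical.
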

\begin{proof}Note $\partial(\sP_D) = \partial({\overline \sP_D})
\subseteq \cup_{F\in F(P_j)}F$.
Moreover $\cup_{j,\nu}E_{j\nu}\subseteq \partial(\sP_D)$, where 
\begin{equation}\label{be}\{E_{j\nu}\}=  \{F\in F(P_j) \mid F\subseteq F',~F'\in F(C_u),~u\in 
L(P_D)\}\subseteq 
{\overline \sP_D}\cap {\overline {\sP_D^c}}\subseteq \partial(\sP_D).
\end{equation}
Let $V = \cup_F\{F\mid F\in F(P_j),~~ F\neq P_i\cap P_j,~\mbox{for any}~ j\}$. 
Then $\cup_{j,\nu}E_{j\nu}\subseteq V$ (see Lemma~\ref{l21}).
Therefore $\partial(\sP_D)\setminus V\subseteq \cup_F\{F\mid F= 
P_i\cap P_j\}$.
If $\partial(\sP_D)\setminus V\neq \phi$ then there exists 
$B_{d-1} \subseteq \partial \sP_D\cap P_i\cap P_j$, for some $P_i\neq P_j$.
 Therefore $P_i\setminus V$ and $P_j\setminus V$ are nonempty open sets 
of $P_i$ and $P_j$ respectively. Hence we can choose an open set 
$B_d$ such that 
$$B_d = [B_{d}\cap (P_i\setminus V)]\cup [B_{d}\cap (P_j\setminus V)]\cup 
[B_{d}\cap (P_i\cap P_j)],$$ where $B_d\cap P_i\cap P_j \subseteq B_{d-1}$. 
Since $B_{d-1}\subseteq P_i\cap P_j$, we have $B_{d-1}\cap {\overline \sP_D}^c 
= \phi $, which implies $B_{d-1}\cap \partial(\sP_D) = \phi$, 
hence a contradiction.
Therefore $\partial(\sP_D) = V$. This proves Assertion~(1).
Now 
$$
\partial(\sP_D)  =  \bigcup_{\{F\in F(P_j),~F\subseteq A(F_j),~F\neq 
P_i\cap P_j\}}F\cup \bigcup_{j,\nu}\{E_{j\nu}\}
 =  \bigcup_{\{ F\in F(C_D)\}}F\cap {\bar \sP_D} \cup 
\bigcup_{\{F\in F(C_u), u\in L(P_D)\}}F\cap {\bar \sP_D}.$$
This proves the lemma.
\end{proof}

\vspace{5pt}

\begin{notations}\label{ndn}
In the rest of the paper, 
for a bounded set $Q\subset \R^d$ and for $n, m\in\N$,  
we define 
\begin{equation}\label{dn}i(Q, n, m):= 
\#(nQ\cap \{z = m\}\cap \mathbb{Z}^{d}),
\end{equation}
where $z$ is the $d^{th}$ coordinate function on $\R^d$.
\end{notations}

\begin{rmk}\label{r1}
From lemmas~\ref{dc} and \ref{l2}, it follows that  
$$P_j=P_j'\bigsqcup (\cup_{\gamma} E_{j\gamma}),~~\mbox{where}~~ 
\{E_{j\gamma}\} = \{F\in F(P_j)\}_{\{F \subseteq
 F',~F'\in F(C_u),~u\in L(P_D)\}}$$
and $E_{j\nu}\not\subseteq \partial~(C_D)$.
Note,  $E_{j\gamma}\cap P_i' = \phi$, for every $i$, as 
$P_i'\subseteq \cup_{u\in L(P_D)} C_D\setminus C_u$.
In particular, 
$$i(\sP_{D}, n, m) = i(\cup_jP_j', n, m) = i(\cup_jP_j, n, m) - 
i(\cup_{j\gamma}E_{j\gamma}, n, m).$$
Therefore, we have 
\begin{equation}\label{***}
 i(\mathcal{P}_D, n, m) =
\sum_ji(P_j, n, m)-\sum_{j< k}i(P_j\cap P_k, n, m)
- \sum_{j,\gamma}i(E_{j\gamma}, n, m)+
\sum_{\alpha\in I_1}\epsilon_\alpha i(Q'_{\alpha}, n, m),
\end{equation}
where $\{Q'_\alpha\}_{\alpha\in I_1}$ runs over a certain finite set 
of polytopes of dimension $\leq d-1$: either
$Q'_\alpha =  P_{j_1}\cap \cdots\cap P_{j_l}$, for distinct $P_{j_i}'s$, where
$l\geq 3$,
or
$Q'_\alpha = E_{j_1\gamma_1}\cap \cdots\cap E_{j_l\gamma_l}$, 
for distinct $E_{j_i\gamma_i}'s$, and $l\geq 2$.
Note that $\epsilon_\alpha \in \{1, -1\}$, depending on $\alpha \in I_1$.
 \end{rmk}

\begin{lemma}\label{l8}Let $Q$ be a convex polytope such that 
$Q\subseteq F$, for some facet $F\in F(P_j)$, where $1\leq j\leq s$.
Then $\dim(Q\cap\{z=\lambda\})\leq d-2$, 
for all $\lambda\in \R_{\geq 0}$. Moreover
\begin{enumerate}
\item if $\dim~Q\leq d-2$ then $\dim~Q\cap \{z=\lambda\} 
= d-2$,  at the most for one $\lambda \in \R_{\geq 0}$.
\item If $dim~Q = d-1$ then 
 $\left[A(Q)\cap \{z= m\}\right]\cap \Z^d \neq \phi$, 
for every $m\in \Z$.
\item If $\dim~Q\cap \{z=\lambda\} = d-2$, for some $\lambda \in \R_{> 0}$
then $[A(nQ)\cap\{z =m\}]\cap \Z^d \neq \phi$, whenever $n, m\in \Z_{> 0}$
such that $m/n =\lambda$.
\end{enumerate}
\end{lemma}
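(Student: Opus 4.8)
\textbf{Proof proposal for Lemma~\ref{l8}.}
The plan is to analyze each facet $F\in F(P_j)$ according to the dichotomy provided by Lemma~\ref{l21}, since that lemma already pins down the affine hull $A(F)$ very precisely: either $A(F) = A(F_{j_i})$ for a facet $F_{j_i}\in F(F_j)$, in which case $A(F)$ is a linear hyperplane passing through the origin (because the cones $F_j$ have their vertex at ${\bf 0}$), or $A(F) = A(F_{u_\nu})$ for a facet $F_{u_\nu}\in F(C_u)$ with $u\in L(P_D)$, in which case $A(F) = (u,1)+A(F')$ is a translate of a linear hyperplane by a lattice vector $(u,1)$ (with $u\in M$). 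In either case $A(Q)\subseteq A(F)$ is an affine subspace of $\R^d$ of dimension at most $d-1$. First I would dispose of the bound $\dim(Q\cap\{z=\lambda\})\le d-2$: this is the statement that the hyperplane $\{z=\lambda\}$ cannot contain $A(Q)$ when $\dim A(Q) = d-1$; equivalently, that $A(F)\neq\{z=\lambda\}$. For the linear-hyperplane case this is immediate, since $\{z=\lambda\}$ is not linear for $\lambda>0$ and for $\lambda = 0$ the slice of the compact body $\sP_D$ at $z=0$ is a point (as noted in Theorem~\ref{thk} the $z=0$ slice is handled directly, the cone being pointed at the origin). For the translated-hyperplane case, $A(F)=(u,1)+H$ with $H$ linear; this equals $\{z=\lambda\}$ only if $H$ is the linear hyperplane $\{z=0\}$ and $\lambda$ is the last coordinate of $(u,1)$, i.e. $\lambda=1$; but then $F = (u,1)+F'$ with $F'\in F(C_D)$ a facet of the cone contained in $\{z=0\}$, forcing $F'\subseteq\partial(C_D)$, which is excluded by Lemma~\ref{l21}(2). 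So the asserted dimension bound holds.

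Next, for assertion (1): if $\dim Q\le d-2$ then $\dim A(Q)\le d-2$, so $A(Q)\cap\{z=\lambda\}$ is nonempty and of dimension $d-2$ for at most one value of $\lambda$ — namely, either $A(Q)$ is already contained in a single horizontal hyperplane $\{z=\lambda_0\}$ (then only $\lambda=\lambda_0$ works) or $z$ is nonconstant on $A(Q)$ (then $A(Q)\cap\{z=\lambda\}$ has dimension $\le d-3$ for every $\lambda$, and the conclusion is vacuous). Either way $\dim(Q\cap\{z=\lambda\}) = d-2$ for at most one $\lambda$. For assertion (2): if $\dim Q = d-1$ then by the above $A(Q) = A(F)$, and by the previous paragraph we are in one of two cases, \emph{both} of which have the property that $A(F)$ is an affine hyperplane defined over $\Q$ whose defining linear form has a coefficient on the $z$-variable; more importantly, $A(F)$ contains a lattice point with prescribed last coordinate: in the linear case $A(F)=A(F_{j_i})$ passes through ${\bf 0}$ and is the linear span of a rational hyperplane, so it is a rational linear subspace and its intersection with $\{z=m\}$ is a nonempty rational affine subspace containing a lattice point for every $m\in\Z$ (here one uses that $F_{j_i}$, being a facet of the cone $F_j$ which itself is cut out by rational hyperplanes through the origin passing through lattice points $(v_{ik},1),(u,1)$, spans a rational hyperplane on which $z$ is surjective onto $\Z$); in the translated case $A(F)=(u,1)+A(F')$ with $(u,1)\in\Z^d$ and $A(F')$ a rational linear hyperplane, and again $A(F')\cap\{z = m-1\}$ contains a lattice point for all $m$, hence $A(F)\cap\{z=m\}$ does too. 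I would write this out carefully using the explicit description of the generating lattice points of the relevant hyperplanes from Notations~\ref{n3}(5) and from the construction of $C_D$ and $C_u$.

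Finally, assertion (3) reduces to (2) by a scaling argument: if $\dim(Q\cap\{z=\lambda\}) = d-2$ for some $\lambda>0$, then $z$ is constant $=\lambda$ on $A(Q)$, so $A(Q)\subseteq\{z=\lambda\}$ and hence $A(Q)$ is a rational affine subspace of dimension $d-2$ sitting inside the horizontal hyperplane $\{z=\lambda\}$; it is defined over $\Q$ because $Q\subseteq F$ with $A(F)$ rational and $\{z=\lambda\}$ — wait, $\lambda$ need not be rational, but when $m/n=\lambda$ with $n,m\in\Z_{>0}$ it is, and that is exactly the hypothesis under which we must produce a lattice point. Then $A(nQ) = nA(Q)$ (dilation from the origin) is contained in $\{z = n\lambda\} = \{z=m\}$, and I claim $A(nQ)\cap\Z^d\neq\phi$. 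For this, note $A(Q)$ arises as $A(F)\cap\{z=\lambda\}$ where $A(F)$ is one of the two types of rational hyperplane above; intersecting a rational hyperplane with $\{z=\lambda=m/n\}$ and then dilating by $n$ lands us, in the linear-hyperplane case, in $nA(F)\cap\{z=m\} = A(F)\cap\{z=m\}$, which contains a lattice point by (the proof of) assertion (2); in the translated case $A(F)=(u,1)+H$, so $nA(F) = n(u,1)+nH = n(u,1)+H$ and $nA(F)\cap\{z=m\}$ is a translate by the lattice vector $n(u,1)$ of $H\cap\{z=m-n\}$, which again contains a lattice point by the rational linear case. I expect the main obstacle to be the bookkeeping in assertion (3): carefully tracking which rational subspace $A(Q)$ actually is, and verifying that dilating by $n$ genuinely clears denominators so that $nA(Q)$ meets $\Z^d$ — this requires knowing that the ``denominator'' of $A(Q)$ divides something controlled by $n$, which in turn comes from $\lambda$ having denominator dividing $n$ together with the rationality (indeed integrality, up to the explicit lattice points $(v_{ik},1),(u,1)$) of the defining data of $A(F)$. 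Everything else is routine linear algebra over $\Q$ combined with the structural results of Lemmas~\ref{l21} and \ref{dc} already established.
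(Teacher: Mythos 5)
Your proposal takes the same route as the paper: invoke Lemma~\ref{l21} to pin down $A(F)$ as either a rational linear hyperplane $H_{iu}$ through the origin or a lattice translate $(u,1)+A(F')$ of a rational linear hyperplane, then exhibit explicit lattice points. Parts (1) and (2) are sound. Two slips are worth correcting. First, in the dimension-bound argument for the translated case, you reason that $A(F)=\{z=\lambda\}$ would force $F'\subseteq\partial(C_D)$, ``which is excluded by Lemma~\ref{l21}(2)''; that invocation is misdirected, since $F'$ is a facet of $C_D$ and $F'\subseteq\partial(C_D)$ is automatic --- what Lemma~\ref{l21}(2) excludes is $F=(u,1)+F'\subseteq\partial(C_D)$. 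The direct reason, which you have elsewhere, is that $A(F')$ is spanned by the lattice vectors $(u_j,1)$ with last coordinate $1$, so $A(F')\neq\{z=0\}$ and hence $A(F)$ is never horizontal. Second, in part (3) you open with the assertion that $\dim(Q\cap\{z=\lambda\})=d-2$ implies $z$ is constant $=\lambda$ on $A(Q)$, which is false when $\dim Q = d-1$ --- exactly the case that arises when the lemma is applied to $Q_\alpha=P_i\cap P_j$ or $E_{j\nu}$ in Lemma~\ref{lr2}. You have tacitly restricted to $\dim Q = d-2$. The lattice-point construction you then carry out does in fact work in both cases, since either way $A(nQ)\cap\{z=m\}=A(nF)\cap\{z=m\}$ (when $\dim Q = d-1$ one has $A(Q)=A(F)$; when $\dim Q = d-2$ one has $A(Q)=A(F)\cap\{z=\lambda\}$), which is what the paper's phrase ``with the notations as in (2)'' captures in a single step before exhibiting $m(u,1)$ and $n(u,1)+(m-n)(u_j,1)$ as the required lattice points. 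You should make this case split explicit rather than open part (3) with a claim that fails in the main case of interest.
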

\begin{proof} By definition, $Q\subseteq F$, for some $F
\in F(F_j)$ or, for some $F\in F(C_u)$
and   $u\in L(P_D)$.
But such hyperplanes  are  transversal to the 
hyperplane $\{z=0\}$.
Hence $\dim~Q\cap \{z=\lambda\} \leq d-2$, for every 
$\lambda\in \R_{\geq 0}$.

\noindent\quad (1) Suppose $\dim(Q\cap \{z=\lambda_0\}) = d-2$, 
 for some $\lambda_0\in \R_{\geq 0}$. 
Then 
$$A(Q) = A(Q\cap \{z=\lambda_0\}) = 
A(Q)\cap \{z=\lambda_0\}.$$ 
Therefore 
$Q \subseteq A(Q\cap \{z=\lambda_0\})$ and 
 $Q\cap \{z=\lambda\} = \phi$, for $\lambda \neq \lambda_0$.
Hence 
 $\dim~Q\cap \{z=\lambda\} = d-2$, at the most at one point.

\vspace{5pt}

\noindent\quad (2)
By  Lemma~\ref{l21}, we have 
$Q \subseteq  A(F)$, where $F\in F(F_j)$ or $\in F(C_u)$.

\noindent{Case}~(1)\quad Let $F$ be a facet of $F_j$ 
for some $F_j$. Then $F\subseteq H_{iu}$ for some hyperplane $H_{iu}$ 
(as given in Notations~\ref{n3})~(5)) and 
 $A(Q) = A(F) = H_{iu}$.
Hence, for $m\in \Z$, we have 
$A(Q)\cap \{z=m\} = H_{iu}\cap \{z = m\}$, where it is easy to check that 
$m(u,1)\in H_{iu}\cap \{z =m\}\cap \Z^d$.

\noindent{Case}~(2)\quad
If $F$ is a facet of $(u, 1)+C_D$ then  $F= (u,1)+F'$, for 
some facet $F'$ of $C_D$. Now $F'$ is a
 cone over a facet $F'' $ of $P_D$.
Hence there exist a subset of vertices $\{u_j\}\subset \Z^{d-1}$ of $P_D$ 
such that 
  $A(Q) = A(F) = (u,1)+\sum_j\alpha_j(u_j,1)\mid \alpha_j\in \R\}.$
Now it is easy to check that $(u,1)+(m-1)(u_j,1) \in A(F)\cap \{z=m\}\cap \Z^d$.

\noindent\quad (3) With the notations as 
in (2), we have 
 $A(nQ\cap \{z= n\lambda\}) = A(nQ)\cap \{z= n\lambda\} = 
A(nF)\cap \{z= n\lambda\}$. Now, for 
$F\in F(F_i)$, 
one can check that $n\lambda(u,1)\in A(nF)\cap \{z= n\lambda\}\cap 
\Z^d$, and  for $F\in F(C_u)$ for some $u\in L(P_D)$ one can check
$n(u,1)+(\lambda-1)n(u_j,1)\in A(nF)\cap \{z= n\lambda\}\cap 
\Z^d$. 
This completes the proof of the lemma.

\end{proof}

\section{Ehrhart quasi-polynomial for rational convex polytope}

In this section, we mainly deal with the Ehrhart's theory of lattice 
points inside rational convex polytopes. Recall that, if $\Z^d$ is the 
integral lattice in $d$-dimensional euclidean space $\R^d$, then a 
convex polytope $P\subset \R^d$ is called integral (rational), if all 
its vertices have integral (rational) coordinates.

\begin{defn}\label{d31} For a rational polytope $P$, 
the smallest number $\rho\in \Q_{>0}$ such that $\rho P$ is 
an integral polytope is called the rational denominator of $P$, and 
is denote by $\tau(P)$. Furthermore, 
the rational $i$-index  $\tau_i(P)$ of $P$ is the 
smallest number $\rho \in \Q_{>0}$ such that 
for each $i$-dimensional face $F$ of $P$ the affine space 
$\rho\hspace{.05cm}A(F)$ contains integral points. 
Here $A(F)$ denotes the affine hull of $F$.
\end{defn}

The following classical  result is due to Ehrhart([Eh]) 
and McMullen(\cite{McM78}).

\begin{thm}\label{ehrhart}
Let $P \subset \R^d$ be a rational polytope. Then
$i(P, -):\Z_{\geq 0} \to \Z_{\geq 0}$, given by 
$$i(P,n):=\#(nP \cap\Z^d) = 
\sum_{i=0}^{\dim(P)} C_i(P,n)n^i,\ \ \text{for}\ \ n\in\Z_{\geq 0},$$
  is a quasi-polynomial of 
degree $\dim{P}$, {\it i.e.},  for every $i$, the coefficient $C_i(P, n)$ 
is periodic in $n$ of period 
$\tau_i(P)$, and 
$C_{\dim{P}}(P, n)$ is not identically zero (in fact is 
$= \mbox{rVol}_{\dim{P}}(P)$, if $A(P)$ contains an integral point).

Moreover 
if $P^\circ$ denotes the interior of $P$ in the affine span of $P$, 
then $i(P^\circ, n) 
=\#(nP^\circ\cap \Z^{d})= (-1)^{\dim(P)} i(P, -n)$.
  In particular, 
$$A(F)\cap \Z^d \neq\phi,~\mbox{for every}~F\in F(P) \implies 
C_{\dim(P)-1}(P, n) = (1/2)\sum_{F\in F(P)} \mbox{rVol}_{\dim(P)-1}(F).$$
\end{thm}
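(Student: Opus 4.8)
The statement to prove is the classical Ehrhart–McMullen theorem (Theorem~\ref{ehrhart}), including the reciprocity formula and the formula for the second-leading coefficient. Since this is a well-known result, the plan is to indicate the standard proof architecture rather than reproduce every detail.

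\medskip

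\textbf{Plan of proof.}
First I would establish that $i(P,n)$ is a quasi-polynomial. The cleanest route is via the generating function / cone-triangulation method: triangulate $P$ into rational simplices with disjoint interiors, use inclusion-exclusion to reduce to the case of a rational simplex $\Sigma$, lift $\Sigma$ to the cone $\mathrm{cone}(\Sigma)$ over $\Sigma \times \{1\}$ in $\R^{d+1}$, and analyze the integer-point generating function of this cone. The fundamental parallelepiped of the cone generators is bounded, hence contributes a polynomial numerator, and one obtains $\sum_{n\ge 0} i(\Sigma,n)\,t^n$ as a rational function whose denominator is a product of factors $(1-t^{\tau})$; extracting coefficients shows $i(\Sigma,n)$ agrees with a polynomial in $n$ on each residue class mod the least common multiple of the denominators. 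Tracking which faces of $\Sigma$ force which periods gives that the coefficient $C_i(P,n)$ has period dividing $\tau_i(P)$, since the $i$-th coefficient only sees the affine hulls of $i$-dimensional faces. The leading term is the normalized volume $\mathrm{rVol}_{\dim P}(P)$ whenever $A(P)$ meets the lattice, by a Riemann-sum argument: $i(P,n)/n^{\dim P}$ converges to the relative volume.

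\medskip

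Next I would prove Ehrhart–Macdonald reciprocity, $i(P^\circ,n) = (-1)^{\dim P}\, i(P,-n)$. Here one again reduces to a simplex and uses the generating-function identity: if $F(t) = \sum_{n\ge 0} i(\Sigma,n) t^n$ is a rational function, then the generating function for interior points is $(-1)^{\dim\Sigma+1} F(1/t)$ as rational functions, which is the functional-equation shadow of the fact that the fundamental parallelepiped and its "open" counterpart are related by the involution sending a lattice point to its complement in the half-open box. Summing over the triangulation via inclusion-exclusion, with careful bookkeeping of lower-dimensional faces (which is where Stanley's argument is delicate), yields the stated reciprocity for $P$ itself.

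\medskip

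Finally, for the formula for $C_{\dim P - 1}(P,n)$ under the hypothesis that $A(F)\cap\Z^d\neq\emptyset$ for every facet $F$: write $i(P,n) - i(P^\circ,n)$. On one hand this equals $i(\partial P, n) = \sum_{F\in F(P)} i(F,n) - (\text{lower-dimensional corrections})$, where the corrections have degree $\le \dim P - 2$; and each $i(F,n)$ has leading term $\mathrm{rVol}_{\dim P-1}(F)\,n^{\dim P -1}$ precisely because $A(F)$ contains a lattice point. On the other hand, by reciprocity $i(P,n) - i(P^\circ,n) = i(P,n) - (-1)^{\dim P} i(P,-n)$, and since $i(P,n) = C_d n^d + C_{d-1}n^{d-1} + \cdots$ with $d = \dim P$, the odd-degree-under-$n\mapsto -n$ terms survive: the coefficient of $n^{d-1}$ in $i(P,n) - (-1)^d i(P,-n)$ is $2 C_{d-1}(P,n)$ (the period-$\tau_{d-1}$ function), while the coefficient of $n^{d-1}$ in $\sum_F i(F,n)$ is $\sum_F \mathrm{rVol}_{d-1}(F)$. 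Equating gives $C_{d-1}(P,n) = \tfrac12 \sum_{F\in F(P)} \mathrm{rVol}_{d-1}(F)$.

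\medskip

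\textbf{Main obstacle.} The genuinely subtle point is the inclusion-exclusion bookkeeping when passing from a simplex back to a general rational polytope in the reciprocity step: lower-dimensional faces of the triangulation are counted with signs, and one must verify that the "interior of $P$" generating function assembled from "interiors of simplices" is correct, i.e.\ that the boundary contributions cancel exactly. For the second-coefficient formula the only real content beyond reciprocity is the hypothesis $A(F)\cap\Z^d\neq\emptyset$, which is exactly what guarantees each facet's Ehrhart quasi-polynomial has the expected top degree with leading coefficient equal to its relative volume (rather than a proper fraction of it); without that hypothesis the stated closed form fails.
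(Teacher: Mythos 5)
The paper does not prove this theorem at all; it is stated as a known classical result with citations to Ehrhart [Eh] and McMullen [M], so there is no "paper proof" to compare yours against. Your sketch reproduces the standard proof architecture (triangulation into rational simplices, cone-over-$\Sigma\times\{1\}$ generating functions, Stanley reciprocity for simplicial cones assembled by inclusion--exclusion) and the outline is correct. In particular you correctly isolate where the hypothesis $A(F)\cap\Z^d\neq\phi$ is used in the second-coefficient formula: it forces $\tau_{d-1}(P)=1$, so that $C_{d-1}(P,\cdot)$ is constant; without that, the coefficient of $n^{d-1}$ in $i(P,n)-(-1)^d i(P,-n)$ is $C_{d-1}(P,n)+C_{d-1}(P,-n)$, which need not equal $2C_{d-1}(P,n)$, and your equating step would break. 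Your bookkeeping that the lower-dimensional inclusion--exclusion corrections to $i(\partial P,n)=\sum_F i(F,n)-\cdots$ only contribute in degree $\leq d-2$, and that each $i(F,n)$ has constant leading coefficient $\mathrm{rVol}_{d-1}(F)$ precisely because $A(F)$ meets the lattice, is also right.

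The one place the sketch compresses something genuinely nontrivial is the period statement. The cone-generating-function argument by itself yields that $i(P,n)$ is a quasi-polynomial with period dividing $\tau(P)$; the sharper assertion that the period of the individual coefficient $C_i(P,\cdot)$ divides the rational $i$-index $\tau_i(P)$ (so that, e.g., integral facet affine hulls alone force $C_{d-1}$ constant) is exactly McMullen's theorem, and your phrase "the $i$-th coefficient only sees the affine hulls of $i$-dimensional faces" names the conclusion rather than derives it. That said, for a sketch of a cited classical theorem this is an acceptable level of detail, and since your application of it in the last paragraph is correct, there is no gap in the logic---just a step that is outsourced, as the paper itself outsources the whole theorem.
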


Here note that, since $i(P,n)$ is a quasi-polynomial, it can be defined for all 
$n \in \Z$. 

McMullen has generalised Theorem \ref{ehrhart} for the rational 
Minkowski sum of finitely many polytopes $P_1,\dots, P_k\subset \R^d$. 
Throughout this section we use the following notations and definition from the literature. 

\begin{notations}
\begin{enumerate}\item For ${\textbf{r}}=(r_1,\ldots, r_k)\in\R^k$ and 
$\textbf{\textit{l}}=(l_1,\ldots,l_k)\in \Z_{\geq 0}^k$, we denote 
$\prod_{i=1}^{k}r_i^{l_i}$ by ${\textbf{r}}^{\textbf{\textit{l}}}$ and 
$\sum_il_i$ by $|\textbf{\textit{l}}|$.
\item  The Hadamard product 
${\textbf{r}} \odot{\textbf{s}}$ of two rational vectors 
$\bf{r},{\textbf{s}}\in\Q^k$ is the coordinate-wise product 
${\textbf{r}}\odot{\textbf{s}}=(r_1s_1,\ldots, r_ks_k)$. 
\end{enumerate}
\end{notations}

With these notations McMullen's result (see comments on page~2 of [HL]) 
 on the Ehrhart quasi polynomial 
of a  Minkowski sum of rational polytopes can be stated as follows.

\begin{thm}\label{McMullen}
Let $P_1,\ldots,P_k \subset \R^d$ be rational polytopes. Then 
the function 
$Q(P_1,\ldots,P_k, -) : \Q^k_{\geq 0} \to \N$ given by 
$$Q(P_1,\ldots,P_k, {\bf r})=
\#(\sum_ir_iP_i\cap \Z^d),~ 
\text{for}\ {\textbf{r}}=(r_1,\ldots, r_k)\in\Q^k_{\geq 0}$$
is a rational 
quasi-polynomial of degree $\dim(P_1 +\cdots+P_k)$ with period 
${\bf \tau}=(\tau(P_1),\ldots,\tau(P_k))$, {\it i.e.},
$Q(P_1,\ldots,P_k, {\bf r})= \sum_{|\textbf{\textit{l}}|\leq d} 
p_{\textbf{\textit{l}}}({\textbf{r}}){\textbf{r}}^{\textbf{\textit{l}}},$ 
where
$p_{\textbf{\textit{l}}} : \Q^k_{\geq 0} \to \Q$ is a periodic function 
with period $\tau_i = \tau(P_i)$ in the 
$i$-th argument, $i=1,\ldots, k$, and $p_{\textbf{\textit{l}}}({\bf r})$ 
is non-zero positive constant
for some ${\textbf{\textit{l}}}\in \Z^k_{\geq 0}$ with 
$|{\textbf{\textit{l}}}| = \dim(P_1+\cdots+P_k)$.
\end{thm}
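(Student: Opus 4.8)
\textbf{Proof plan for Theorem~\ref{McMullen}.}
The plan is to deduce McMullen's statement from the single-polytope Ehrhart theorem (Theorem~\ref{ehrhart}) by the standard homogenization/Cayley-sum trick. First I would consider the polytope
$$\mathrm{Cay}(P_1,\dots,P_k) := \mathrm{conv}\Big(\bigcup_{i=1}^k \big(P_i\times\{e_i\}\big)\Big)\subset \R^d\times\R^k,$$
where $e_1,\dots,e_k$ is the standard basis of $\R^k$; this is a rational polytope since each $P_i$ is. For a point ${\bf r}=(r_1,\dots,r_k)\in\Q^k_{\geq 0}$ with $|{\bf r}|=1$ one checks, directly from the description of points of a convex hull, that the slice of $\mathrm{Cay}(P_1,\dots,P_k)$ by the affine subspace $\R^d\times\{{\bf r}\}$ is exactly $\sum_i r_i P_i$ (sitting in that fiber). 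More generally, dilating by a positive integer $N$, the slice of $N\cdot\mathrm{Cay}(P_1,\dots,P_k)$ at height ${\bf s}\in\Z^k_{\geq 0}$ (with $|{\bf s}|=N$) is $\sum_i s_i P_i$, and the lattice points of $\R^d\times\{{\bf s}\}$ in it are in bijection with $(\sum_i s_i P_i)\cap\Z^d$. This reduces the counting of $\#(\sum_i s_iP_i\cap\Z^d)$ to counting lattice points in fixed-height slices of dilates of one rational polytope.

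Next I would invoke the slice version of Ehrhart's theorem — equivalently, apply Theorem~\ref{ehrhart} to the rational polytopes obtained by intersecting $\mathrm{Cay}(P_1,\dots,P_k)$ with the rational affine subspaces $\{$ height proportional to a fixed rational direction $\}$, or simply use McMullen's observation that for a rational polytope $\widetilde P\subset\R^{d+k}$ the function ${\bf s}\mapsto \#\big((\widetilde P\cap(\R^d\times\{{\bf s}\}))\cap\Z^{d}\big)$, for ${\bf s}\in\Z^k_{\geq 0}$ lying in a fixed rational ray-cone, is a quasi-polynomial. Carrying this out along each ray through ${\bf r}\in\Q^k_{>0}$ and assembling, one gets that $Q(P_1,\dots,P_k,-)$ agrees, on $\Q^k_{\geq 0}$, with a function of the form $\sum_{|{\bf l}|\leq d}p_{\bf l}({\bf r}){\bf r}^{\bf l}$ with each $p_{\bf l}$ periodic of period $\tau(P_i)$ in the $i$-th variable; the total degree bound $\dim(P_1+\cdots+P_k)$ comes from the dimension of the generic slice $\sum r_iP_i$. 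The positivity/non-vanishing of the top-degree coefficient for some ${\bf l}$ with $|{\bf l}|=\dim(P_1+\cdots+P_k)$ follows because, for ${\bf r}$ in the interior of $\Q^k_{>0}$, the leading term must reproduce $\mathrm{rVol}_{\dim(P_1+\cdots+P_k)}(\sum r_iP_i)$, which is a nonzero homogeneous polynomial in ${\bf r}$ of that degree (a mixed-volume-type expression) — in particular some monomial coefficient is a positive constant.

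The main obstacle is the periodicity bookkeeping: one must verify that after restricting $\mathrm{Cay}(P_1,\dots,P_k)$ to the integer points and running the homogenization, the resulting quasi-polynomial genuinely has period exactly $(\tau(P_1),\dots,\tau(P_k))$ in the respective arguments, rather than some common refinement — this is where the rational denominators $\tau(P_i)$ of the individual polytopes (Definition~\ref{d31}) enter, and it requires tracking which lattice directions in $\R^{d+k}$ control the lattice structure of each slice. I would handle this by noting that scaling the $i$-th height coordinate by $\tau(P_i)$ makes the corresponding slice an integral translate of a lattice polytope, so the count is unchanged under ${\bf r}\mapsto{\bf r}+\tau(P_i)e_i$. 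A secondary technical point is that Theorem~\ref{ehrhart} is stated for dilations by a single scalar $n$, so to get the genuinely multivariate statement one either iterates the Cayley trick variable by variable, or appeals to the known multivariate refinement; since the paper only needs the two-polytope case with the structure already recalled in the excerpt, I would present the argument for general $k$ but emphasize that the case $k=2$ (used in Section~4 and Section~5) follows verbatim. All of the above is classical, so in the paper I would state it briefly and cite McMullen~\cite{McM78} together with the exposition in [HL], indicating the Cayley-sum reduction as the proof.
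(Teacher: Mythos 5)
The paper itself gives no proof of this statement: its entire ``proof'' is the line \emph{``See \cite{McM78}, Theorem~7.''} Your closing recommendation --- state the theorem and cite McMullen together with [HL] --- therefore matches exactly what the paper does, and that is the right call for a result of this sort.

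Your Cayley-sum sketch is the standard conceptual route, and the bijection between $(\sum_i s_iP_i)\cap\Z^d$ and the height-${\bf s}$ lattice slice of $N\cdot\mathrm{Cay}(P_1,\dots,P_k)$ with $N=|{\bf s}|$ is set up correctly. But note a circularity risk in the middle step: what you call ``the slice version of Ehrhart's theorem'' or ``McMullen's observation'' is not a corollary of the single-polytope Ehrhart statement (Theorem~\ref{ehrhart}) --- it is essentially Theorem~7 of \cite{McM78} itself. Applying Theorem~\ref{ehrhart} to $\mathcal{C}_{\bf r}=\mathrm{Cay}(P_1,\dots,P_k)\cap(\R^d\times\mathrm{span}\,{\bf r})$ only gives, for each fixed rational ray direction ${\bf r}$, that $n\mapsto Q(P_1,\dots,P_k;n\delta{\bf r})$ is a quasi-polynomial in $n$ (where $\delta$ clears denominators of ${\bf r}$). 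It does not, by itself, say that the resulting ray-by-ray coefficients assemble into a single expression $\sum_{|{\bf l}|\le d}p_{\bf l}({\bf r}){\bf r}^{\bf l}$ with each $p_{\bf l}$ periodic of period $\tau(P_i)$ in the $i$-th coordinate: that uniformity across rays is precisely the content of McMullen's result (and of Henk--Linke's Theorem~1.3). So as a proof, your sketch implicitly assumes the conclusion at the ``slice Ehrhart'' step. The mixed-volume argument for positivity of the top coefficients and the $\tau(P_i)$-translation argument for the periodicity are fine. Since the paper does not need a self-contained proof and you end up citing McMullen anyway, this gap does not affect the paper --- but you should not present the Cayley-trick paragraph as if it supplies a complete argument on top of Theorem~\ref{ehrhart} alone.
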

\begin{proof}
See \cite{McM78}, Theorem 7.
\end{proof}

\begin{defn}
$Q(P_1,\ldots,P_k, -) $ is called the {\em rational Ehrhart quasi-polynomial} 
of $P_1,\ldots, P_k$, and the $\textbf{\textit{l}}$-th coefficient of 
$Q(P_1,\ldots,P_k, -) $ is denoted by $Q_{\textbf{\textit{l}}}(P_1,\ldots,P_k, -)$. 
\end{defn}

In 2011, Linke  has proved (see Theorem~1.2, Corollary~1.5 and 
Theorem~1.6 of [L]) the following result 
about the coefficients of rational Ehrhart quasi-polynomial of a 
rational polytope.

 \begin{thm}\label{tl}
 Let $P \subset \R^d$ be a rational polytope of dimension $d$ with 
rational Ehrhart quasi-polynomial 
$$i(P,r):=\#(rP \cap\Z^d)=
\sum_{i=0}^{\dim(P)}C_i(P,r)r^i,~\mbox{where}~ r \in \Q_{\geq 0}.$$

Then (1)~~there exist $0=r_0<r_1<\cdots <r_l=\tau(P)$, such that 
$C_i(P,-)$ is a polynomial of degree $d-i$ on $(r_{m-1}, r_m)$, for each 
$m=1,\ldots, l$ and $i=0,\ldots , d$.  (2)~~The reciprocity 
theorem is true for rational dilates and for all dimension, i.e. for all 
$r\in \Q_{\geq 0}$,
$i(P, r)=(-1)^{\dim(P)}i(P, -r)$. In particular $C_d(P, r) = \mbox{Vol}_d(P)$, for all 
$r\in \Q_{>0}$, and, in addition, if $C_{d-1}(P, r)$ is independent of 
$r>0$ then 
$C_{d-1}(P, r) = (1/2)\sum_{F\in F(P)}\mbox{rVol}_{d-1}F$.
\end{thm}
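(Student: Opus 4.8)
\textbf{Proof strategy for Theorem~\ref{tl}.}
This is Linke's theorem, and the plan is to reduce it to the ordinary (integral) Ehrhart theory of Theorem~\ref{ehrhart} together with the general Minkowski result of Theorem~\ref{McMullen}. First observe that Theorem~\ref{McMullen}, applied with $P_1=\cdots=P_k=P$, already yields most of the quasi-polynomial structure: substituting $(r_1,\ldots,r_k)=(r,0,\ldots,0)$ into $Q(P,\ldots,P;\mathbf r)$ kills every term with $l_2+\cdots+l_k\geq 1$ and leaves
\[
i(P,r)=\#(rP\cap\Z^d)=\sum_{i=0}^{d}C_i(P,r)\,r^{i},\qquad C_i(P,r):=p_{(i,0,\ldots,0)}(r,0,\ldots,0),
\]
with each $C_i(P,-)$ periodic of period $\tau(P)$ and with $C_d(P,r)=\mbox{vol}_d(P)$ constant (the volume of $P$). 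So the content still to be proved is (a) that on a suitable finite subdivision $0=r_0<r_1<\cdots<r_l=\tau(P)$ each $C_i(P,-)$ agrees with a \emph{polynomial} of degree $d-i$, and (b) the reciprocity law.

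For part (a) I would reduce to simplices. Fix a triangulation of $P$ with rational vertices and pass to a half-open decomposition $P=\bigsqcup_{\sigma}\sigma^{\flat}$ into relatively half-open rational simplices; since $Q\mapsto\#(rQ\cap\Z^d)$ is additive over such a decomposition, the coefficient functions add, $C_i(P,r)=\sum_\sigma C_i(\sigma^{\flat},r)$, so it suffices to treat a single rational $d$-simplex $\Delta=\mathrm{conv}(v_0,\ldots,v_d)$ (lower-dimensional pieces are handled identically inside their affine hulls). For $r=a/b\in\Q_{>0}$ one has $i(\Delta,a/b)=\#(a\Delta\cap b\Z^d)$, the value at the integer $a$ of the ordinary Ehrhart quasi-polynomial of $\Delta$ with respect to the sublattice $b\Z^d$. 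Writing this out through the cone $C=\mathrm{cone}\{(v_0,1),\ldots,(v_d,1)\}\subset\R^{d+1}$ --- either via Brion's formula at the vertices of $a\Delta$, or via a fundamental-parallelepiped decomposition of $C\cap(b\Z^d\times\Z)$ --- I expect an expression for $\#(a\Delta\cap b\Z^d)$ whose only non-polynomial dependence on the pair $(a,b)$ is carried by a combinatorial datum recording, for each face $F$ of $\Delta$, whether the affine hull of $(a/b)F$ meets $\Z^d$. This datum is locally constant in $r$ and changes only at those rationals $r$ for which some $rF$ first acquires a lattice point in its affine hull --- a finite union of arithmetic progressions, hence finitely many points in $(0,\tau(\Delta)]$; taking the union over all faces of all the simplices $\sigma^{\flat}$ produces the breakpoints $r_1<\cdots<r_{l-1}$. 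On each open interval between consecutive breakpoints the expression becomes a genuine polynomial in $r$, and expanding it in powers of $r$ and reading off leading terms should give $\deg_r C_i(P,-)=d-i$.

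For part (b) I would invoke Ehrhart--Macdonald reciprocity for the rational polytope $P$ relative to each finite-index sublattice $b\Z^d$: with $E_b(n):=\#(nP\cap b\Z^d)$ one has $\#(\mathrm{relint}(nP)\cap b\Z^d)=(-1)^{\dim P}E_b(-n)$. Rescaling by $1/b$ and using $E_b(n)=i(P,n/b)$, this says $i(P^\circ,n/b)=(-1)^{\dim P}\,i(P,-n/b)$ as an identity of quasi-polynomials; since $b$ is arbitrary, the reciprocity law holds for all rational dilates. Comparing leading coefficients re-derives $C_d(P,r)=\mbox{vol}_d(P)$ for every $r>0$; and if $C_{d-1}(P,-)$ happens to be independent of $r$, then the periodic correction terms in the second Ehrhart coefficient of each $E_b$ must vanish, which forces every facet of $P$ to have a lattice point in its affine hull, so Theorem~\ref{ehrhart} applies and gives $C_{d-1}(P,r)=\frac12\sum_{F\in F(P)}\mbox{rVol}_{d-1}(F)$.

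The hard part is (a): namely, that the coefficient data depends only on $r$ and not on the representation $r=a/b$, that it is genuinely polynomial of the stated degree on each sub-interval, and that the breakpoints are exactly the ones described. The obstacle is uniformity in $b$: distinct denominators give Ehrhart quasi-polynomials with respect to distinct lattices $b\Z^d$, and one must show that these separately-periodic packages of coefficient data glue into a single piecewise-polynomial function of the real parameter $r$. Making this precise is Linke's Theorem~1.2, and it is exactly the feature that Theorem~\ref{t1} below must carry over --- following Henk--Linke --- to rational Minkowski sums of two polytopes. The cleanest route to the uniformity seems to be to organize the vertex contributions via Barvinok-type short rational generating functions (or within McMullen's polytope-algebra/valuation formalism), throughout keeping track of the dilation parameter $r$ as a one-parameter deformation of $P$.
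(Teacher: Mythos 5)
The paper does not prove Theorem~\ref{tl}: it is imported verbatim from Linke's paper [L] (Theorem~1.2, Corollary~1.5, Theorem~1.6), in the same way Theorems~\ref{ehrhart}, \ref{McMullen}, \ref{t3} are quoted from the literature. So there is no in-paper proof to compare against, and your proposal has to stand on its own.

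On its own it has a genuine gap exactly where you flag it. Your part~(a) stops at ``I expect an expression\ldots should give $\deg_r C_i(P,-)=d-i$\ldots Making this precise is Linke's Theorem 1.2,'' i.e.\ the piecewise-polynomiality and the degree bound --- the actual content of part~(1) --- are asserted, not proved. Linke's route (and the Henk--Linke generalization the paper reproduces in Lemma~\ref{l2}) is not a triangulation/Brion computation at all; it is an inductive finite-difference argument on $r\mapsto p(r+\tau)-p(r)$, which reduces the degree of the quasi-polynomial and yields polynomiality and the degree bound simultaneously on each cell where the quasi-polynomial is locally constant. Your simplex/cone outline is not obviously wrong, but the uniformity-in-denominator issue you isolate is precisely where it is undone, and no argument is offered. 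A second, smaller flaw: at the end you claim that constancy of $C_{d-1}(P,-)$ ``forces every facet of $P$ to have a lattice point in its affine hull,'' which is not justified --- constancy of the coefficient is weaker than $\tau_{d-1}(P)=1$, since contributions from distinct facets can in principle interfere. The conclusion doesn't need that claim: since $C_{d-1}(P,-)$ has period $\tau(P)$, evaluate at $r=\tau(P)$, where $\tau(P)P$ is integral so Theorem~\ref{ehrhart} applies to it, and use $\mbox{rVol}_{d-1}(\tau(P)F)=\tau(P)^{d-1}\mbox{rVol}_{d-1}(F)$ to get $C_{d-1}(P,\tau(P))=\tfrac12\sum_{F\in F(P)}\mbox{rVol}_{d-1}(F)$; constancy then gives the value everywhere. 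Your sublattice reciprocity argument for part~(2) is sound (and correctly produces $i(P^\circ,r)=(-1)^{\dim P}i(P,-r)$, which is what the theorem should and in [L] does say --- the displayed form in the paper without the interior is a typo, consistent with Theorem~\ref{ehrhart}).
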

Later, the above theorem was generalised for Minkowski sum of polytopes 
by Henk and Linke in their paper \cite{HenkLinke2015} (Theorem~1.3).

We recall briefly some important points relevant 
to the statement of Theorem~1.3 of [HL].

For a polytope $P \subset \R^d$, let $h(P,-) : \R^d \to \R$ be its support 
function, i.e., $h(P,v) = \text{max}\{\langle v, x\rangle : x \in P\}$. 
A hyperplane
$H(P,v) := \{x \in {\R}^d\ \arrowvert\ \langle x,v\rangle = h(P,v)\},$
for $v\in {\R}^d\setminus \{0\}$ is called a \textit{supporting hyperplane} of $P$. 
If $P$ is full dimensional, i.e., $\dim(P)=d$, then each facet $F$ of 
$P$ is given by a unique supporting hyperplane 
$H_F=\{x\in {\R}^d\mid \langle x, a_F\rangle = b_F\}$,
where $(a_F, b_F)\in {\R}^d\times\R$ is unique up to multiplication 
by a positive real number. Let 
$H_F^{-1}=\{x\in {\R}^d\mid \langle x, a_F\rangle \leq b_F\}$,   Let $P\subset {\R}^d$ be a full dimensional 
lattice polytope; the hyperplane representation of $P$ is 
$$P=\bigcap_{F\in F(P)}H_F^{-}=\bigcap_{F\in F(P)}\{x\in {\R}^d\mid \langle x, 
a_F\rangle \leq b_F\}$$
where the intersection runs over all facets $F$ of $P$. We call $a_F/||a_F||$
the {\em outer unit normal} of the facet $F$ of $P$.

Let $P_1,\ldots,P_k \subset\R^d$ be rational polytopes with 
$\dim(P_1+\cdots+P_k)= d$. Let $\textbf{v}_1,\ldots, \textbf{v}_m \in \Z^d$, be the 
outer 
normals of the facets of the rational polytope $P_1+\ldots+P_k.$ Observe 
that for all $\textbf{r} \in\R^k_{> 0}$ the facets of the polytope 
$r_1P_1+\ldots+r_kP_k$ have the same outer normals 
$\textbf{v}_1,\ldots, \textbf{v}_m$. 
For details about support function and face decomposition of Minkowski 
sum, see  \cite{Sch2013}.
Now, for $\textbf{r} \in \R^k_{> 0}$ and $\textbf{z} \in\Z^d$ we 
know $\textbf{z}\in \sum_{i=1}^{k}r_iP_i$ if and only if 
$\langle \textbf{z}, \textbf{v}_j\rangle \leq \sum_{i=1}^{k}r_ih(P_i,{\bf v}_j)$ 
for $1\leq j \leq m$. Thus $Q(P_1,...,P_k,\textbf{r})$ is a constant 
function on the interior of the $k$-dimensional cells induced by the 
hyperplane arrangement
\begin{eqnarray}
\label{ehyp}
\left\{ \big\{{\textbf{r}}\in \R^k_{> 0} :\sum_{i=1}^{k}r_ih(P_i,{\bf v}_j)=
\langle \textbf{z}, \textbf{v}_j\rangle\big\} : {\textbf z}\in\Z^d, 
j=1,\ldots,s\right\}.
\end{eqnarray}
Let $S$ be the interior of a fixed $k$-dimensional cell given by 
this section. 
Then $Q(P_1,\ldots,P_k,-)$ is constant on $S$.

The result of Henk-Linke (Theorem~1.3) can be stated as follows
\begin{thm}\label{t3}
Let $P_1,\ldots,P_k \subset\R^d$ be rational polytopes with 
$\dim(P_1+\cdots+P_k)= d$ and let $\textbf{\textit{l}} \in\Z_{\geq 0}^k$ 
with $|\textbf{\textit{l}}|\leq d$. Then $Q_{\textbf{\textit{l}}}(P_1,\ldots,P_k,-)$ is a 
piecewise polynomial function of degree at most 
$d-|\textbf{\textit{l}}|$ on open $k$-cells given by 
the hyperplane arrangements as in (\ref{ehyp}).
\end{thm}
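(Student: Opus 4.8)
The plan is to prove Theorem~\ref{t3} (the Henk--Linke result, stated as the final theorem in the excerpt) by reducing the assertion about a coefficient $Q_{\textbf{\textit{l}}}(P_1,\ldots,P_k,-)$ to the known polynomiality of $Q(P_1,\ldots,P_k,-)$ itself on each open $k$-cell of the arrangement (\ref{ehyp}). First I would recall the basic structural fact used above: for $\textbf{r}\in\R^k_{>0}$, the polytope $r_1P_1+\cdots+r_kP_k$ has a fixed normal fan, so the combinatorial type of its face lattice is constant, and its vertices are of the form $\sum_i r_i w_i^{(F)}$ where $w_i^{(F)}$ is the vertex of $P_i$ ``selected'' by the facet-normal data at a given vertex $v_F$ of the sum. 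Consequently, on each open $k$-cell $S$ of (\ref{ehyp}), the half-open decomposition of $r_1P_1+\cdots+r_kP_k$ into half-open simplices (a la Stanley/McMullen) can be chosen with vertices depending linearly on $\textbf{r}$, uniformly over $S$.

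Second, on such a fixed half-open simplex $\Delta(\textbf{r})$ with linearly varying lattice-point vertices, the number of lattice points $\#(\Delta(\textbf{r})\cap\Z^d)$ is given by a single polynomial in $\textbf{r}$ of degree $\dim\Delta$ on $S$ — this is where one invokes McMullen's theorem \ref{McMullen} together with the fact that the quasi-period is built into the arrangement, so that $Q$ agrees with a polynomial $g_S(\textbf{r})$ on $S$. Summing the inclusion--exclusion over the half-open pieces shows $Q(P_1,\ldots,P_k,\textbf{r}) = g_S(\textbf{r})$ on $S$, a polynomial of total degree $d$. Now I would compare two expressions for the restriction $Q|_S$: on one hand $Q|_S = g_S(\textbf{r})$, a genuine polynomial; on the other hand $Q(P_1,\ldots,P_k,\textbf{r}) = \sum_{|\textbf{\textit{l}}|\leq d} p_{\textbf{\textit{l}}}(\textbf{r})\,\textbf{r}^{\textbf{\textit{l}}}$, where each $p_{\textbf{\textit{l}}}$ is $\boldsymbol{\tau}$-periodic. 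The key point is that $S$ can be taken small enough to lie inside a single period box (each cell of (\ref{ehyp}) is cut out by hyperplanes whose translates under $\Z^d$-shifts generate, within $(0,\tau_1]\times\cdots\times(0,\tau_k]$, a refinement; more precisely the arrangement is $\boldsymbol{\tau}$-periodic, so one period box is a union of whole cells). Hence on $S$ each $p_{\textbf{\textit{l}}}$ is a constant $c_{\textbf{\textit{l}}}^{(S)}$, and we get the polynomial identity $g_S(\textbf{r}) = \sum_{\textbf{\textit{l}}} c_{\textbf{\textit{l}}}^{(S)}\textbf{r}^{\textbf{\textit{l}}}$.

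Third, to extract the claimed degree bound on the coefficient functions $Q_{\textbf{\textit{l}}} = p_{\textbf{\textit{l}}}$ viewed as piecewise polynomials, I would argue by a dilation/scaling trick rather than trying to disentangle monomials directly. Fix a ray direction: for $t>0$ in a suitable interval, consider $Q(P_1,\ldots,P_k,t\textbf{r}_0)$ for $\textbf{r}_0$ a fixed rational point; by homogeneity of the monomials $\textbf{r}^{\textbf{\textit{l}}}$ this is $\sum_{\textbf{\textit{l}}} p_{\textbf{\textit{l}}}(t\textbf{r}_0)\,t^{|\textbf{\textit{l}}|}\textbf{r}_0^{\textbf{\textit{l}}}$, and comparing with the Ehrhart quasi-polynomial of the single polytope $Q_0 := \sum_i r_{0,i}P_i$ evaluated at dilation $t$ one reads off, via Theorem~\ref{tl}, that $p_{\textbf{\textit{l}}}(t\textbf{r}_0)$ is polynomial of degree $\leq d-|\textbf{\textit{l}}|$ in $t$ on each subinterval avoiding the walls; doing this for $\textbf{r}_0$ ranging over a Zariski-dense set of directions inside a single cell $S$, and using that $p_{\textbf{\textit{l}}}|_S = c_{\textbf{\textit{l}}}^{(S)}$ is already constant from the previous step, one upgrades to: $Q_{\textbf{\textit{l}}}|_S$ is the coefficient function obtained by matching the polynomial $g_S$ against the monomial basis, which visibly has the stated degree. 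Actually, since $p_{\textbf{\textit{l}}}$ is literally constant on each cell $S$, the ``degree at most $d - |\textbf{\textit{l}}|$'' statement collapses: on open cells these coefficient functions are constants, hence trivially piecewise polynomial of every nonnegative degree, and the content of the theorem is precisely the polynomiality of $Q$ on each cell plus the constancy of the $p_{\textbf{\textit{l}}}$ there, which the above establishes. I would therefore present the proof in the two substantive steps — (i) $Q$ is a polynomial of degree $d$ on each open $k$-cell, via a $\textbf{r}$-uniform half-open triangulation and McMullen; (ii) each $p_{\textbf{\textit{l}}}$ restricted to an open $k$-cell is a constant, using $\boldsymbol{\tau}$-periodicity of both the $p_{\textbf{\textit{l}}}$ and the arrangement — and then note the degree claim follows formally.

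\medskip

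\noindent\emph{Main obstacle.} The delicate point is step (i): producing a triangulation (or half-open decomposition) of $r_1P_1+\cdots+r_kP_k$ whose simplices have vertices that are \emph{linear} functions of $\textbf{r}$ and whose combinatorial structure is \emph{constant} across a whole open cell $S$, so that lattice-point counting on each piece is genuinely polynomial in $\textbf{r}$ rather than merely quasi-polynomial. This requires the observation that the vertices of the Minkowski sum are indexed by the (fixed) normal fan and are given by selecting, for each maximal cone, a vertex from each $P_i$ — which is where the constancy of the outer normals $\textbf{v}_1,\ldots,\textbf{v}_m$ (noted in the text preceding (\ref{ehyp})) does the work — together with care that the chosen triangulation of the boundary complex is itself normal-fan-compatible so it does not jump as $\textbf{r}$ moves within $S$. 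Once this uniformity is in hand, the remaining arguments are bookkeeping with inclusion--exclusion and periodicity.
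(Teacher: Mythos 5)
Your proposal has a fundamental misconception at its heart that trivializes the theorem. In your step (ii) you assert that each coefficient $p_{\mathbf{l}}$ restricted to an open $k$-cell $S$ of the arrangement (\ref{ehyp}) is a constant, justifying this by the $\boldsymbol{\tau}$-periodicity of $p_{\mathbf{l}}$ together with $S$ fitting inside a single period box. But periodicity $p_{\mathbf{l}}(\mathbf{r}) = p_{\mathbf{l}}(\mathbf{r}+\mathbf{u}\odot\boldsymbol{\tau})$ only relates values at points differing by a whole period; it imposes no constraint on the variation of $p_{\mathbf{l}}$ \emph{within} one period box, and the $p_{\mathbf{l}}$ are in general nonconstant on cells. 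Already for $k=1$, $d=1$, $P_1=[0,1/2]$: here $Q(r)=\lfloor r/2\rfloor + 1$, the arrangement is $2\Z \cap \R_{>0}$, the cells are the intervals $(2z,2z+2)$, and the constant coefficient is $p_0(r) = (z+1) - r/2$ on $(2z,2z+2)$, a nonconstant degree-one polynomial. Showing that each $p_{\mathbf{l}}$ is a polynomial of degree $\leq d-|\mathbf{l}|$ on cells is precisely the nontrivial content of Henk--Linke; it does not ``collapse.'' Your step (i) is also misdirected: the passage just before the theorem statement already records that $Q$ itself is \emph{constant} on each open cell of (\ref{ehyp}) (a cell is by definition a region on which the set of lattice points in $r_1P_1+\cdots+r_kP_k$ does not change), so the ``polynomial $g_S$ of total degree $d$'' you build via half-open triangulation is a constant, and the triangulation machinery adds nothing.

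What the argument actually requires --- and what the paper's Lemma~\ref{l2}, imitating Lemmas~2.2--2.3 of [HL], supplies --- is a finite-difference induction. The two inputs are: (a) $Q$ is constant on each cell $S$ and on every translate $S+\mathbf{u}\odot\boldsymbol{\tau}$ with $\mathbf{u}\in\Z_{\geq 0}^k$ (Lemma~\ref{l5}); and (b) the $p_{\mathbf{l}}$ are $\boldsymbol{\tau}$-periodic with constant top-degree coefficients. One forms $q(\mathbf{r}) := Q(\mathbf{r}+\mathbf{e}_i\odot\boldsymbol{\tau}) - Q(\mathbf{r})$, a quasi-polynomial of degree $d-1$ that again satisfies (a) and (b), and whose coefficients are fixed $\Q$-linear combinations of the $p_{\mathbf{l}}$ with $|\mathbf{l}|$ strictly larger. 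Induction on degree gives polynomiality of the $q$-coefficients on $S$; a descending induction on $|\mathbf{l}|$ then recovers polynomiality and the degree bound for the $p_{\mathbf{l}}$ on $S$, with the identity $p_{\mathbf{0}} = Q - \sum_{\mathbf{l}\neq\mathbf{0}} p_{\mathbf{l}}\,\mathbf{r}^{\mathbf{l}}$ on $S$ handling the constant term. This is the step your proposal omits, and without it the degree bound does not ``follow formally.''
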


Here we  consider 
the case with $k=2$. By Theorem \ref{McMullen}, $Q(P_1, P_2, -)$ is a 
quasi-polynomial of degree dim$(P_1+P_2)$ with period $\tau=(\tau_1, \tau_2)$. 
The hyperplane arrangement in (\ref{ehyp}) can be rewritten as
\begin{equation}\label{3*}
\left\{ \big\{{\textbf{r}}\in \R^2_{> 0} : r_1h(P_1,{\bf v}_j)+r_2h(P_2, {\bf v}_j)=
\langle \textbf{z}, \textbf{v}_j\rangle\big\} :{\textbf z}\in\Z^d, 
j=1,\ldots,s\right\}
\end{equation}
and $Q(P_1, P_2, -)$ is constant on each open $2$-cell in the complement of 
these lines.
 
\begin{notations}\label{n2}
\begin{enumerate}
\item For ${\textbf z}\in \Z^d,$ and $1\leq j \leq m$, we denote 
 the line 
$$ L_{j}(\textbf{z}) =  \big\{{\textbf{r}}\in \R^2 : 
r_1h(P_1,{\bf v}_j)+r_2h(P_2,{\bf v}_j)=\langle \textbf{z}, \textbf{v}_j\rangle\big\}.$$  

 the positive halfspace 
 $L_{j}(\textbf{z})^+ = 
 \big\{{\textbf{r}}\in \R^2 : 
r_1h(P_1,{\bf v}_j)+r_2h(P_2,{\bf v}_j)\geq\langle \textbf{z}, 
\textbf{v}_j\rangle\big\}$
and the positive open halfspace 
 $L_{j}(\textbf{z})^{+\circ}$ the interior of 
$L_{j}(\textbf{z})^+$, {\em i.e.}, 
$$L_{j}(\textbf{z})^{+\circ} = \big\{{\textbf{r}}\in \R^2 : 
r_1h(P_1,{\bf v}_j)+r_2h(P_2,{\bf v}_j)>\langle \textbf{z}, \textbf{v}_j\rangle\big\}.$$ 
Similarly one defines $L_{j}(\textbf{z})^-$ and $L_{j}(\textbf{z})^{-\circ}$ 
for ${\textbf z}\in \Z^d$ and for $j=1,\ldots , s.$

\item Denote the period rectangle $T = (0, \tau_1]\times (0, \tau_2]$, where 
$(\tau_1, \tau_2) = (\tau(P_1), \tau(P_2))\in \Q^2_{>0}$.
\item Note that for each $j\in \{1,\ldots, m\}$, there can be only finitely 
many $L_j(\textbf{z})$ 
intersecting the period rectangle $T$, as ${\bf v}_j\in \Z^d$.

\item $\R^2_{>0}$ is the disjoint union of locally closed 
sets, namely 
\begin{equation}\label{e3}\R^2_{> 0} = (\bigcup_{S\in {\tilde C_P}}S)\cup 
(\bigcup_{I\in {\tilde I_P}}I)\cup T_0,\end{equation}
where ${\tilde C_P}$ = the set of open $2$-cells obtained by the hyperplane 
arrangement as given in (\ref{3*}), 
 the set $$T_0 = \{L_j({\bf z}) \cap L_i({\bf z}')\cap \R^2_{> 0}\mid L_j({\bf z}) 
\neq L_i({\bf z}'),
\quad\mbox{for all}~z, z'\in \Z^d~\mbox{and}~1\leq i, j\leq m\}$$
is a discrete  set of points and 
$${\tilde I_P} = \mbox{the connected components of}~~ 
\R^2_{> 0} \setminus (\bigcup_{S\in {\tilde C_P}}S)\cup T_0 $$
is the set of open intervals.
In particular, for any $I\in {\tilde I_P}$, there exists a unique line  
$L_{j_0}({\bf z}_0)$ such that $I$ is 
 a  conncted component of 
$$L_{j_0}({\bf z}_0)\setminus \{L_{j_0}({\bf z}_0)\cap L_j({\bf z})\mid  
L_{j_0}({\bf z}_0) \neq L_j({\bf z}),~~1\leq j\leq m,~~{\bf z}\in 
\Z^d\}\cap \R^2_{>0}.$$

\end{enumerate}
\end{notations}

\begin{defn}\label{d1}For given $I \in {\tilde I_P}$, we associate 
(the unique) $S_I \in {\tilde C_P}$ as follows: By definition 
$I \subset L_j({\bf z})$, for a unique line $L_j({\bf z})$ (as in Notation~\ref{n2})~in $\R^2$.
Then  $S_I$ is  the  unique cell in ${\tilde C_P}$ such 
that $I\subset {\bar S_I}$, the 
 closure of $S_I$ in $\R^2_{>0}$ and $S_I\subset L_j({\bf z})^{+o}$. 
\end{defn}

\begin{lemma}\label{l3}Given $(I, S_I)\in {\tilde I_P}\times {\tilde C_P}$,  we have 
$$S_I\subset L_{j_l}({\bf z}_{i_l})^+ \iff I\subset L_{j_l}({\bf z}_{i_l})^+.$$
Moreover, if $L_{j_0}({\bf z}_{i_0})$  is the line containing $I$,
then for 
$L_{j_l}({\bf z}_{i_l}) \neq L_{j_0}({\bf z}_{i_0})$, we have 
$$ S_I \subset L_{j_l}({\bf z}_{i_l})^+ \iff S_I \subset L_{j_l}({\bf z}_{i_l})^{+o} 
\iff I \subset L_{j_l}({\bf z}_{i_l})^{+o} \iff I\subset L_{j_l}({\bf z}_{i_l})^+.$$
\end{lemma}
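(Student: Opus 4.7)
The plan is to exploit the structure of the hyperplane arrangement: the open $2$-cells $S\in\tilde{C_P}$ are by construction connected components of the complement of the union of all lines $L_j({\bf z})$ meeting $\R^2_{>0}$, so no such $S$ meets any line in the arrangement; and an interval $I\in \tilde{I_P}$ lies in a unique line $L_{j_0}({\bf z}_{i_0})$ and, being a connected component of that line minus all its crossing points with other lines (which form $T_0$), meets no other line $L_{j_l}({\bf z}_{i_l})\neq L_{j_0}({\bf z}_{i_0})$.

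For the first equivalence, the direction $(\Rightarrow)$ is immediate since $L_{j_l}({\bf z}_{i_l})^+$ is closed and $I\subset \overline{S_I}$. For $(\Leftarrow)$, suppose $I\subset L_{j_l}({\bf z}_{i_l})^+$. If $L_{j_l}({\bf z}_{i_l})=L_{j_0}({\bf z}_{i_0})$ then by Definition~\ref{d1} we have $S_I\subset L_{j_0}({\bf z}_{i_0})^{+o}\subset L_{j_l}({\bf z}_{i_l})^+$ and there is nothing to prove. Otherwise, by the remark above, $I$ is disjoint from $L_{j_l}({\bf z}_{i_l})$, so actually $I\subset L_{j_l}({\bf z}_{i_l})^{+o}$. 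Fix any $p\in I$ and an open disc $U\subset \R^2_{>0}$ around $p$ small enough to avoid every line of the arrangement except $L_{j_0}({\bf z}_{i_0})$. Then $U\subset L_{j_l}({\bf z}_{i_l})^{+o}$, and the component of $U\setminus L_{j_0}({\bf z}_{i_0})$ lying on the $S_I$-side is contained in $S_I$. Thus $S_I\cap L_{j_l}({\bf z}_{i_l})^{+o}\neq\emptyset$; since $S_I$ is connected and disjoint from $L_{j_l}({\bf z}_{i_l})$, it lies entirely in $L_{j_l}({\bf z}_{i_l})^{+o}$, and in particular in $L_{j_l}({\bf z}_{i_l})^+$.

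For the moreover part, assuming $L_{j_l}({\bf z}_{i_l})\neq L_{j_0}({\bf z}_{i_0})$, both $S_I$ (as an open cell in the complement of all lines) and $I$ (by the argument just above) are disjoint from $L_{j_l}({\bf z}_{i_l})$. Hence containment of either $S_I$ or $I$ in the closed halfspace $L_{j_l}({\bf z}_{i_l})^+$ automatically upgrades to containment in the open halfspace $L_{j_l}({\bf z}_{i_l})^{+o}$. Combining these upgrades with the first equivalence yields the chain of four equivalences.

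The only nontrivial step is the connectedness argument in $(\Leftarrow)$ of the first part, where one propagates a local inclusion (on a small disc around a point of $I$) to the whole cell $S_I$; this works precisely because the cells of $\tilde{C_P}$ are defined as connected components of the complement of the \emph{entire} line arrangement, so $S_I$ cannot straddle $L_{j_l}({\bf z}_{i_l})$. The remaining assertions are essentially bookkeeping with closed versus open halfspaces.
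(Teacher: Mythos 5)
Your argument is correct: the key observations are that a $2$-cell $S_I$ is disjoint from every line of the arrangement while the interval $I$ is disjoint from every line except the one $L_{j_0}({\bf z}_{i_0})$ containing it, and that $I\subset\overline{S_I}$ with $S_I\subset L_{j_0}({\bf z}_{i_0})^{+o}$; a small-disc/connectedness argument then propagates the halfspace inclusion from $I$ to $S_I$ and back, and the disjointness upgrades closed to open inclusions. The paper dismisses this lemma with ``It is easy to check,'' so your proof fills in exactly the details the authors elected to omit, along the expected lines.
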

\begin{proof} It is easy to check.
\end{proof}

\begin{lemma}\label{l4} Given $S\in {\tilde C_P}$, 
$Q(P_1,P_2, {\textbf{s}}) = \mbox{constant\quad for all}\quad
{\textbf{s}}\in S.$

Given  $(I, S_I)\in {\tilde I_P}\times {\tilde C_P}$
$$Q(P_1,P_2, {\textbf{s}}) = \mbox{constant for all}\quad
{\textbf{s}}\in S_I\cup I.$$
\end{lemma}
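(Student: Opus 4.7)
The plan is to reduce both assertions to a single reformulation of $Q(P_1,P_2,{\bf r})$ as a count of lattice points satisfying half-plane conditions. Recall from the discussion preceding Notations~\ref{n2} that $\textbf{z}\in r_1P_1+r_2P_2$ if and only if ${\bf r}\in L_j(\textbf{z})^+$ for every $j=1,\ldots,m$. Consequently, for any subset $A\subset \R^2_{>0}$ on which each halfspace membership ``$A\subset L_j(\textbf{z})^+$'' is either true or false uniformly in ${\bf r}\in A$, the function $Q(P_1,P_2,-)$ is constant on $A$, with value $\#\{\textbf{z}\in\Z^d:A\subset L_j(\textbf{z})^+\text{ for all }j\}$.

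For the first assertion, an open $2$-cell $S\in{\tilde C_P}$ is by construction disjoint from every line $L_j(\textbf{z})$ of the arrangement (\ref{3*}), so either $S\subset L_j(\textbf{z})^{+\circ}\subset L_j(\textbf{z})^+$ or $S\subset L_j(\textbf{z})^{-\circ}$ for each pair $(j,\textbf{z})$. Thus the uniform halfspace membership property holds on $A=S$, and the constancy of $Q(P_1,P_2,-)$ on $S$ is immediate.

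For the second assertion, fix $(I,S_I)\in{\tilde I_P}\times{\tilde C_P}$ and let $L_{j_0}(\textbf{z}_0)$ be the unique line of the arrangement containing $I$ (as in Notations~\ref{n2}). I would verify the uniform half-plane property on $A=S_I\cup I$ by splitting into two cases. For any pair $(j,\textbf{z})$ with $L_j(\textbf{z})\neq L_{j_0}(\textbf{z}_0)$, Lemma~\ref{l3} directly gives $S_I\subset L_j(\textbf{z})^+$ iff $I\subset L_j(\textbf{z})^+$, so either both inclusions hold or neither does. For the exceptional line, Definition~\ref{d1} yields $S_I\subset L_{j_0}(\textbf{z}_0)^{+\circ}\subset L_{j_0}(\textbf{z}_0)^+$, while $I\subset L_{j_0}(\textbf{z}_0)\subset L_{j_0}(\textbf{z}_0)^+$; hence both $S_I$ and $I$ sit inside the same closed halfspace. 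Any further pair $(j',\textbf{z}')$ for which $L_{j'}(\textbf{z}')$ coincides as a line with $L_{j_0}(\textbf{z}_0)$ is handled identically. Therefore $A\subset L_j(\textbf{z})^+$ iff $S_I\subset L_j(\textbf{z})^+$ for every pair $(j,\textbf{z})$, and the lattice-point count is constant on $A$.

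The only subtlety lies in the transition across $L_{j_0}(\textbf{z}_0)$: the argument crucially uses the closed halfspace ``$\leq$'' description of $r_1P_1+r_2P_2$, which absorbs the boundary so that any point ${\bf s}'\in I$ lying exactly on this line still counts every lattice point $\textbf{z}$ with $L_j(\textbf{z})=L_{j_0}(\textbf{z}_0)$ that is counted on the strict side $S_I$. Once this observation is in place, the proof is a purely formal combination of Definition~\ref{d1} and Lemma~\ref{l3}.
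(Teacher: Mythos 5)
Your overall strategy—recasting $Q(P_1,P_2,{\bf r})$ as the count of lattice points ${\bf z}$ satisfying ${\bf r}\in L_j({\bf z})^+$ for all $j$, and then checking a uniform half-space dichotomy on the set $A$—is exactly the argument the paper has in mind; the paper itself offers only "It is easy to check." Your treatment of an open $2$-cell $S$ is clean and correct, and the use of Lemma~\ref{l3} together with Definition~\ref{d1} for pairs $(j,{\bf z})$ with $L_j({\bf z})\neq L_{j_0}({\bf z}_0)$ is also the right move.

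There is, however, one step where you are too quick, and it is the crux of the matter: the sentence \emph{"Any further pair $(j',{\bf z}')$ for which $L_{j'}({\bf z}')$ coincides as a line with $L_{j_0}({\bf z}_0)$ is handled identically."} It is not. Two pairs $(j,{\bf z})$ and $(j',{\bf z}')$ can cut out the same line as a subset of $\R^2$ but with \emph{opposite} positive half-planes (this happens precisely when $(h(P_1,{\bf v}_j),h(P_2,{\bf v}_j))$ is a negative multiple of $(h(P_1,{\bf v}_{j'}),h(P_2,{\bf v}_{j'}))$). In that situation $S_I\subset L_{j_0}({\bf z}_0)^{+\circ}$ forces $S_I\subset L_{j'}({\bf z}')^{-\circ}$, so $S_I\cap L_{j'}({\bf z}')^+=\emptyset$ while $I\subset L_{j'}({\bf z}')\subset L_{j'}({\bf z}')^+$. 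The uniform dichotomy you rely on therefore breaks at $(j',{\bf z}')$, and the count genuinely can jump. A concrete instance is $P_1=[1,2]$, $P_2=[2,4]$ in $\R^1$: here $L_1(2n)$ and $L_2(n)$ are the same line $\{r_1+2r_2=n\}$ with opposite orientations, $Q(1,1)=\#([3,6]\cap\Z)=4$, while $Q$ equals $3$ on the open strips immediately on either side, so no adjacent $2$-cell matches $Q$ on $I$. (The same phenomenon makes the asserted uniqueness in Definition~\ref{d1} and the first equivalence in Lemma~\ref{l3} problematic.) To be fair, this defect is inherited from the paper, and the downstream results of Section~4 can be salvaged by simply treating each $I$ as its own stratum (where $Q$ is indeed constant) rather than grouping it with $S_I$; but as written your step does not follow, and you should at least record the standing assumption that the line through $I$ has a unique representation up to positive scaling, or replace the appeal to $S_I\cup I$ by a direct verification on $I$ alone.
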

\begin{proof}It is easy to check.\end{proof}

\begin{lemma}\label{l5}Let ${\bf u} = (u_1, u_2)\in \Z^2_{\geq 0}$ and let
 $T = (0\times \tau_1]\times
(0\times \tau_2]$ be as in Notations~\ref{n2}.
Then, for given 
\begin{enumerate}
\item  
$S\in {\tilde C_P}$, we have  
$Q(P_1,P_2, {\textbf{s}}) = \mbox{constant, for all}\quad
{\textbf{s}}\in S\cap T + \textbf{u}\odot {\bf\tau}$ and, for given
\item   $(I, S_I)\in {\tilde I_P}\times {\tilde C_P}$,  we have 
$Q(P_1,P_2, {\textbf{s}}) = \mbox{constant, for all}\quad
{\textbf{s}}\in (S_I\cup I)\cap T + \textbf{u}\odot {\bf\tau}.$
\end{enumerate}
\end{lemma}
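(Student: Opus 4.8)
\textbf{Proof strategy for Lemma~\ref{l5}.}
The plan is to reduce the statement to Lemma~\ref{l4} by exploiting the periodicity of the rational Ehrhart quasi-polynomial $Q(P_1,P_2,-)$. Recall from Theorem~\ref{McMullen} that $Q(P_1,P_2,{\bf r}) = \sum_{|{\bf l}|\leq d} p_{\bf l}({\bf r}){\bf r}^{\bf l}$, but the \emph{function} $Q(P_1,P_2,-):\Q^2_{\geq 0}\to\N$ itself is \emph{not} periodic; only the coefficient functions $p_{\bf l}$ are periodic with period ${\bf \tau} = (\tau_1,\tau_2)$. So a naive translation argument fails. Instead, the key observation is that the hyperplane arrangement in (\ref{3*}) is translation-equivariant in the following sense: for ${\bf z}\in\Z^d$ and ${\bf u}=(u_1,u_2)\in\Z^2_{\geq 0}$, the line $L_j({\bf z})$ translated by ${\bf u}\odot{\bf\tau}$ is again a line of the form $L_j({\bf z}')$ for a suitable ${\bf z}'\in\Z^d$. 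Indeed, a point ${\bf r}$ lies on $L_j({\bf z})$ iff $r_1 h(P_1,{\bf v}_j) + r_2 h(P_2,{\bf v}_j) = \langle{\bf z},{\bf v}_j\rangle$, and translating ${\bf r}$ by $u_1\tau_1$ in the first coordinate and $u_2\tau_2$ in the second shifts the left-hand side by $u_1\tau_1 h(P_1,{\bf v}_j) + u_2\tau_2 h(P_2,{\bf v}_j)$; since $\tau_i P_i$ is a lattice polytope, $\tau_i h(P_i,{\bf v}_j)\in\Z$, so this shift is an integer $\langle{\bf w},{\bf v}_j\rangle$ realizable by some lattice vector ${\bf w}$ (taking ${\bf w}$ so that $\langle{\bf w},{\bf v}_j\rangle$ equals that integer for all $j$ simultaneously requires a bit of care — see below).

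First I would establish the translation-equivariance of the arrangement precisely. The cleanest route: since $\tau_i P_i$ has integral vertices, we have $\tau_i h(P_i,{\bf v}_j) = h(\tau_i P_i,{\bf v}_j)\in\Z$ for every facet normal ${\bf v}_j$. Set $N_{ij} := \tau_i h(P_i,{\bf v}_j)\in\Z$. Then translating by ${\bf u}\odot{\bf\tau} = (u_1\tau_1, u_2\tau_2)$ sends $L_j({\bf z})$ to the line $\{{\bf r}: r_1 h(P_1,{\bf v}_j) + r_2 h(P_2,{\bf v}_j) = \langle{\bf z},{\bf v}_j\rangle + u_1 N_{1j} + u_2 N_{2j}\}$. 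Now I must check that the right-hand side is of the form $\langle{\bf z}',{\bf v}_j\rangle$ for a \emph{single} ${\bf z}'\in\Z^d$ working for all $j$; but actually this is automatic because the quantity $u_1 N_{1j} + u_2 N_{2j}$ is exactly $\langle{\bf z}'',{\bf v}_j\rangle$ where ${\bf z}''$ is \emph{any} lattice point in $u_1\tau_1 P_1 + u_2\tau_2 P_2$ lying on the corresponding facet — more simply, since ${\bf v}_1,\ldots,{\bf v}_m$ span $\R^d$, the map ${\bf z}\mapsto(\langle{\bf z},{\bf v}_j\rangle)_j$ is injective, and one checks directly that translation of the whole arrangement $\{L_j({\bf z})\}_{{\bf z},j}$ by ${\bf u}\odot{\bf\tau}$ permutes the lines. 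Consequently it permutes the open $2$-cells in ${\tilde C_P}$, the open intervals in ${\tilde I_P}$, and the discrete point set $T_0$, and it is compatible with the pairing $I\mapsto S_I$ of Definition~\ref{d1} (because the positive-halfspace relations $L_j({\bf z})^{+o}$ transform consistently).

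Next, with equivariance in hand, fix $S\in{\tilde C_P}$ and ${\bf u}\in\Z^2_{\geq 0}$. The set $S\cap T + {\bf u}\odot{\bf\tau}$ is contained in the translated cell $S + {\bf u}\odot{\bf\tau}$, which by the above is again one of the open $2$-cells in ${\tilde C_P}$, call it $S'$. By Lemma~\ref{l4}, $Q(P_1,P_2,-)$ is constant on $S'$, hence constant on the subset $S\cap T + {\bf u}\odot{\bf\tau}$, which proves (1). For (2), given $(I,S_I)\in{\tilde I_P}\times{\tilde C_P}$, the translate $(S_I\cup I) + {\bf u}\odot{\bf\tau}$ equals $S_{I'}\cup I'$ for the translated pair $(I',S_{I'})$ (using that the pairing is preserved by the translation), so Lemma~\ref{l4} applies again to give that $Q(P_1,P_2,-)$ is constant on $(S_I\cup I)\cap T + {\bf u}\odot{\bf\tau}\subseteq S_{I'}\cup I'$, proving (2).

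\textbf{Main obstacle.} The only genuinely delicate point is verifying that translation by ${\bf u}\odot{\bf\tau}$ really does map the \emph{labelled} arrangement (lines, cells, intervals, and the $I\mapsto S_I$ incidence data) to itself — in particular that for each $j$ the integer $u_1 N_{1j} + u_2 N_{2j}$ is attained as $\langle{\bf z}',{\bf v}_j\rangle - \langle{\bf z},{\bf v}_j\rangle$ by some common lattice translation, and that the halfspace containments in Lemma~\ref{l3} and Definition~\ref{d1} are respected. Everything after that is a one-line appeal to Lemma~\ref{l4}. I do not expect this to require more than a short paragraph of bookkeeping, since the translation vector ${\bf u}\odot{\bf\tau}$ is chosen precisely so that each $\tau_i h(P_i,{\bf v}_j)$ is an integer.
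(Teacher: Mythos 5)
Your proposal is correct and takes essentially the same route as the paper: establish that translation by ${\bf u}\odot{\bf\tau}$ carries the hyperplane arrangement of~(\ref{3*}), its cells, its intervals, and the $I\mapsto S_I$ incidence to themselves, then invoke Lemma~\ref{l4}. One small clarification: the worry about finding a \emph{single} ${\bf z}'\in\Z^d$ working for all $j$ is a red herring — you only need, for each line $L_j({\bf z})$ separately, some ${\bf z}'$ with $L_j({\bf z})+{\bf u}\odot{\bf\tau}=L_j({\bf z}')$; the paper does exactly this by picking, for each facet normal ${\bf v}_j$, a lattice point ${\tilde{\bf z}_j}$ on the corresponding facet of the integral polytope $u_1\tau_1P_1+u_2\tau_2P_2$, so that $L_j({\bf z}+{\tilde{\bf z}_j})=L_j({\bf z})+{\bf u}\odot{\bf\tau}$ (and likewise for $L_j^{\pm o}$), which is the clean version of the equivariance step you sketch.
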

\begin{proof}For $(u_1, u_2)\in \Z^2_{\geq 0}$, the polytope
 $u_1\tau_1P_1+u_2\tau_2P_2$ is an integral polytope. Therefore, for every 
facet $F_j$ of (this polytope) with the outer normal ${\bf v}_j$, we can choose 
${\tilde {\bf z}_j}\in F_j\cap \Z^d$ such that 
$h(u_1\tau_1P_1+u_2\tau_2P_2, {\bf v}_j) = \langle{{\tilde {\bf z}_j}, 
{\bf v}_j}\rangle $.
Now, it is easy check that, for every $1\leq j\leq m$, 
$$L_j({\bf z}+{\tilde {\bf z}_j}) = L_j({\bf z}) + {\bf u}\odot {\bf \tau},\mbox{}\quad 
L_j({\bf z}+{\tilde {\bf z}_j})^{+o} = L_j({\bf z})^{+o} + {\bf u}\odot {\bf \tau} 
\quad\mbox{and}\quad L_j({\bf z}+{\tilde {\bf z}_j})^{-o} = L_j({\bf z})^{-o} + 
{\bf u}\odot {\bf \tau}.$$

A given $S\in {\tilde C_P}$  can be written as
$$S = \left[\cap_{\mu=0}^{s_1}L_{j_\mu}
(\textbf{z}_{i_\mu})^{+\circ}\right]\cap
\left[\cap_{\nu=1}^{s_2}L_{l_\nu}(\textbf{z}_{k_\nu})^{-\circ}\right]\cap 
\R^2_{>0},$$
for some 
$\textbf{z}_{i_\mu}, \textbf{z}_{k_\nu}\in \Z^d$ and 
$1\leq j_{\mu}, l_{\nu}\leq m$.

Therefore 
$$S + {\bf u}\odot {\bf \tau} = \left[\cap_{\mu=0}^{s_1}L_{j_\mu}
(\textbf{z}_{i_\mu})^{+\circ} + {\bf u}\odot {\bf \tau}\right]\cap
\left[\cap_{\nu=1}^{s_2}L_{l_\nu}(\textbf{z}_{k_\nu})^{-\circ} + {\bf u}\odot {\bf \tau}\right]\cap 
\left[\R^2_{>0}+ {\bf u}\odot {\bf \tau}\right],$$
$$ = \left[\cap_{\mu=0}^{s_1}L_{j_\mu}
(\textbf{z}_{i_\mu} + {\bf z}_{j_{\mu}})^{+\circ} \right]\cap
\left[\cap_{\nu=1}^{s_2}L_{l_\nu}(\textbf{z}_{k_\nu}+
{\bf z}_{l_{\nu}})^{-\circ}\right]\cap 
\left[\R^2_{>0}+ {\bf u}\odot {\bf \tau}\right].$$
Note that, for any ${\bf z}\in \Z^d$ and $1\leq j
\leq m$, if $S\in {\tilde C_P}$ then
  we have 
$L_j({\bf z}-{\tilde {\bf z}_j})\cap S = \phi$, since
$L_j({\bf z}-{\tilde {\bf z}_j})$ is one of the lines in the hyperplane 
arrangement~\ref{ehyp}.  
This implies 
$L_j({\bf z})\cap (S + {\bf u}\odot{\bf \tau}) = 
(L_j({\bf z}-{\tilde {\bf z}_j})
\cap S)+{\bf u}\odot {\bf \tau} = \phi$.
Hence $S + {\bf u}\odot {\bf \tau} \subseteq S_1$, for some $S_1\in {\tilde C_P}$.

Therefore, by Lemma~\ref{l4}, 
$$Q(P_1,P_2, {\textbf{s}}) = \mbox{constant, for all}\quad
{\textbf{s}}\in (S + \textbf{u}\odot {\bf\tau})\cap 
(T+\textbf{u}\odot {\bf\tau}) = (S\cap T) + \textbf{u}\odot {\bf\tau}.$$

This proves Assertion~(1).

Note that $I\in {\tilde I_P}$ if and only if  $I \subset L_{j_0}({\bf z}_0)$, some 
$1\leq j_0\leq m$ and ${\bf z}_0\in \Z^d$, 
such that $I$ is a connected component of 
$$ (L_{j_0}({\bf z}_0)\setminus \{L_{j_0}({\bf z}_0)\cap 
L_j({\bf z})\mid {\bf z}\in \Z^d,~~~1\leq j\leq m~\mbox{and}~
 L_j({\bf z})\neq L_{j_0}({\bf z}_0)\})\cap \R^2_{>0}.$$
This implies that $I+ {\bf u}\odot{\bf \tau} $ is a connected component of 
$$\{L_{j_0}({\bf z}_0+{\tilde {\bf z}_{j_0}})\setminus \{L_{j_0}({\bf z}_0+
{\tilde {\bf z}_{j_0}})\cap 
L_j({\bf z}+{\tilde {\bf z}_j})\mid {\bf z}\in \Z^d,~~~1
\leq j\leq m,~\mbox{and}~L_{j_0}({\bf z}_0+
{\tilde {\bf z}_{j_0}})\neq
L_j({\bf z}+{\tilde {\bf z}_j})\}\cap \R^2_{>0}.$$
Hence $I+{\bf u}\odot{\bf \tau}\in {\tilde I_P}$.
One can easily check (from Lemma~\ref{l3}) that 
$S_I+{\bf u}\odot 
{\bf \tau} = S_{I+ {\bf u}\odot{\bf \tau}}$. Hence,
$$(S_{I+{\bf u}\odot{\bf \tau}} \cup (I+{\bf u}\odot{\bf \tau}))\cap (T 
+\textbf{u}\odot {\bf\tau})
= ((S_I\cup I)\cap T)+{\bf u}\odot{\bf\tau}.$$
Now, by Lemma~\ref{l4}, 
$$Q(P_1,P_2, {\textbf{s}}) = \mbox{constant, for all}\quad {\bf s} \in ((S_I\cup I)\cap T) + 
\textbf{u}\odot {\bf\tau}.$$
This completes the proof of the lemma.\end{proof}

In the proof of the next Lemma, we imitate the  arguments given in the proof 
of Lemma 2.2 and Lemma 2.3 of [HL].

\begin{lemma}\label{l2}
Let $p:\Q^2_{>0}\longto \Q$ be a rational quasi-polynomial of degree $n\geq 1$ 
with period $\tau = (\tau_1, \tau_2)\in \Q^2_{>0}$ and constant leading coefficients, {\em i.e.}, 
\begin{equation}\label{e1}
p({\bf r})=\sum_{l_1+l_2 \leq n}
p_{l_1,l_2}({\bf r})r_1^{l_1}r_2^{l_2},\quad\mbox{where}\quad{\bf r} = (r_1, r_2)\in 
\Q^2_{> 0},
\end{equation}
such that 
\begin{enumerate} \item  
$p_{l_1, l_2}({\bf r})\in \Q$ is constant for all 
$(l_1, l_2)\in\Z^2_{\geq 0}$ 
with $l_1+l_2=n$, and for every ${\bf r}\in \Q^2_{>0}$,
\item $p_{l_1, l_2}:\Q^2_{>0}\longto \Q$ are periodic functions with period 
${\bf \tau}=(\tau_1, \tau_2)$ for all $(l_1, l_2)\in\Z^2_{\geq 0}$ with 
$l_1+l_2 < n$.
\end{enumerate}

 Let $E\subseteq \R^2_{>0}$ be a 
subset of $\R^2$, such that, for every ${\bf u}\in \Z^2_{\geq 0}$, there exists 
$c_{\bf u}\in \Q$ with 
\begin{equation}\label{e2}
p({\bf r}+{\bf u}\odot{\bf \tau})=c_{\bf u}\text{ for all }{\bf r}\in E\cap\Q^2.
\end{equation}
Then for all $(l_1, l_2)\in\Z^2_{\geq 0}$ with $l_1+l_2 \leq n$, 
the coefficient function $p_{l_1, l_2}:E\cap \Q^2_{>0}\longto \Q$ is a polynomial of 
degree atmost  $n-(l_1+l_2)$.
\end{lemma}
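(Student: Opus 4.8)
The plan is to exploit hypothesis~(\ref{e2}) together with the periodicity of the coefficient functions $p_{l_1,l_2}$ in order to rewrite the assertion as a triangular system of polynomial identities in ${\bf r}$, which one then solves by a downward induction on $l_1+l_2$. This follows closely the strategy behind Lemmas~2.2 and~2.3 of [HL].

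First I would fix ${\bf r}=(r_1,r_2)\in E\cap\Q^2_{>0}$ and, for ${\bf u}=(u_1,u_2)\in\Z^2_{\geq 0}$, substitute ${\bf r}+{\bf u}\odot{\bf \tau}$ into (\ref{e1}). By periodicity (hypotheses~(1) and~(2)) we have $p_{l_1,l_2}({\bf r}+{\bf u}\odot{\bf \tau})=p_{l_1,l_2}({\bf r})$, so expanding $(r_1+u_1\tau_1)^{l_1}(r_2+u_2\tau_2)^{l_2}$ by the binomial theorem and collecting the monomials $u_1^a u_2^b$ yields
$$c_{\bf u}=p({\bf r}+{\bf u}\odot{\bf \tau})=\sum_{a+b\leq n}A_{a,b}({\bf r})\,u_1^a u_2^b,\qquad A_{a,b}({\bf r}):=\tau_1^a\tau_2^b\sum_{\substack{l_1\geq a,\ l_2\geq b\\ l_1+l_2\leq n}}\binom{l_1}{a}\binom{l_2}{b}\,p_{l_1,l_2}({\bf r})\,r_1^{l_1-a}r_2^{l_2-b}.$$
By (\ref{e2}) the left-hand side $c_{\bf u}$ does not depend on ${\bf r}\in E\cap\Q^2_{>0}$. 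Since the monomials $\{u_1^a u_2^b : a+b\leq n\}$ are linearly independent as functions on $\Z^2_{\geq 0}$ (a Vandermonde argument: first fix $u_2\in\{0,1,\ldots,n\}$ and vary $u_1$, then vary $u_2$), it follows that for each $(a,b)$ with $a+b\leq n$ the function ${\bf r}\mapsto A_{a,b}({\bf r})$ is constant on $E\cap\Q^2_{>0}$; denote this constant by $\alpha_{a,b}\in\Q$.

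Next I would induct downward on $k=l_1+l_2$, from $k=n$ to $k=0$, to show that the restriction of $p_{l_1,l_2}$ to $E\cap\Q^2_{>0}$ agrees with a polynomial in $r_1,r_2$ of degree at most $n-k$. When $a+b=n$ the only index occurring in $A_{a,b}$ is $(l_1,l_2)=(a,b)$, so $\alpha_{a,b}=\tau_1^a\tau_2^b\,p_{a,b}({\bf r})$ and $p_{a,b}\equiv\alpha_{a,b}/(\tau_1^a\tau_2^b)$ is constant on $E\cap\Q^2_{>0}$, which is the base case. For the inductive step, fix $(a,b)$ with $a+b=k<n$ and isolate the term with $(l_1,l_2)=(a,b)$ in the defining formula for $A_{a,b}$:
$$p_{a,b}({\bf r})=\frac{\alpha_{a,b}}{\tau_1^a\tau_2^b}-\sum_{\substack{l_1\geq a,\ l_2\geq b\\ k<l_1+l_2\leq n}}\binom{l_1}{a}\binom{l_2}{b}\,p_{l_1,l_2}({\bf r})\,r_1^{l_1-a}r_2^{l_2-b},\qquad {\bf r}\in E\cap\Q^2_{>0}.$$
By the induction hypothesis each $p_{l_1,l_2}$ with $l_1+l_2>k$ agrees on $E\cap\Q^2_{>0}$ with a polynomial of degree at most $n-(l_1+l_2)$, so the corresponding summand $p_{l_1,l_2}({\bf r})\,r_1^{l_1-a}r_2^{l_2-b}$ has degree at most $n-(l_1+l_2)+(l_1-a)+(l_2-b)=n-k$. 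Hence the right-hand side is the restriction to $E\cap\Q^2_{>0}$ of a polynomial in $r_1,r_2$ of degree at most $n-k=n-(l_1+l_2)$, completing the induction and the proof.

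The computations above are bookkeeping with binomial coefficients and degrees; the step requiring genuine care is the passage from the fact that $c_{\bf u}$ is independent of ${\bf r}$ to the conclusion that each coefficient $A_{a,b}({\bf r})$ is independent of ${\bf r}$, which rests on the linear independence of the monomials $u_1^a u_2^b$ over $\Z^2_{\geq 0}$. The triangularity of the system with respect to the coordinatewise partial order on $(l_1,l_2)$ is what makes the downward induction close.
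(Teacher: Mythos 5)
Your proof is correct, and it takes a route that is genuinely different in structure from the paper's, though both ultimately exploit the same algebraic phenomenon. The paper imitates Lemmas~2.2--2.3 of [HL]: it inducts on the degree $n$, forms the finite difference $q({\bf r}) = p(r_1, r_2+\tau_2) - p(r_1,r_2)$ (and its analogue $q'$ in the $r_1$-direction), verifies that $q$ is again a quasi-polynomial of degree $n-1$ with constant leading coefficients and with $q({\bf r}+{\bf u}\odot\tau) = c_{{\bf u}+e_2} - c_{\bf u}$ constant on $E$, applies the inductive hypothesis to $q$ and $q'$, and then runs a nested descending induction on $(l_1,l_2)$ via the triangular relation (\ref{e4}) to recover the coefficients of $p$ from those of $q$ and $q'$. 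You instead expand $p({\bf r}+{\bf u}\odot\tau)$ once and for all as a polynomial in $u_1,u_2$, invoke the linear independence of the monomials $u_1^a u_2^b$ on $\Z^2_{\geq 0}$ (together with the constancy of $c_{\bf u}$ over $E$) to conclude directly that every coefficient $A_{a,b}$ is constant on $E\cap\Q^2_{>0}$, and then solve the resulting triangular system by a single downward induction on $l_1+l_2$. What you gain is a cleaner, one-pass argument: you never need to verify that a lower-degree auxiliary quasi-polynomial again satisfies all the lemma's hypotheses, and you avoid the double induction. The computation in your step three is correct: the summand $p_{l_1,l_2}({\bf r})\,r_1^{l_1-a}r_2^{l_2-b}$ has degree at most $(n-l_1-l_2)+(l_1-a)+(l_2-b) = n-k$, and the base case $a+b=n$ forces $(l_1,l_2)=(a,b)$ so only one term survives. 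The only care point, as you flag, is the linear-independence step, and your Vandermonde sketch is adequate since $\Z^2_{\geq 0}$ is Zariski dense. Both proofs are valid; the paper's is closer to the published literature, yours is the more self-contained.
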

\begin{proof}
We prove the lemma by induction on $\deg~p = n$. For $n=1$ and for ${\bf r}\in 
E\cap \Q^2$, 
we have
\begin{equation*}
c_{\bf 0} = p({\bf r})
= p_{0,0}({\bf r})+p_{(1, 0)}({\bf r})r_1+p_{(0, 1)}({\bf r})r_2.
\end{equation*}
Therefore $p_{0,0}:E\cap \Q^2\longto \Q$ is a polynomial of degree $\leq 1$ (if $p_{(1,0)} = p_{(0,1)} = 0$ then $p_{(0,0)} = $ constant). 

Now let $n\geq 2$.

Let
$$q({\bf r})=p({\bf r}+{\bf e}_2\odot{\bf\tau})-p({\bf r}) = p(r_1, r_2+\tau_2)-p(r_1, r_2).$$
Then, for ${\bf r}\in E\cap\Q^2_{>0}$, and ${\bf u}\in\Z^2_{\geq 0}$, we have
$$q({\bf r}+{\bf u}\odot{\bf \tau})=p({\bf r}+({\bf u}+e_2)\odot{\bf \tau})
-p({\bf r}+{\bf u}\odot{\bf \tau})=c_{{\bf u}+e_2}-c_{\bf u}.$$
Next we show that $q$ is a quasi-polynomial of degree $n-1$ and of period
${\bf \tau}$ with 
constant leading coefficients. Now, for every ${\bf r}\in \Q^2_{\geq 0}$

\begin{eqnarray*}
q({\bf r})&=&p(r_1, r_2+\tau_2)-p(r_1,r_2)\\
&=&\sum_{l_2\neq 0,~~l_1+l_2\leq n}p_{l_1, l_2}({\bf r})r_1^{l_1}
\left[ \tau_2^{l_2}+{{l_2}\choose{1}}\tau_2^{l_2-1}r_2+\cdots +
{{l_2}\choose{l_2-1}}\tau_2r_2^{l_2-1}\right]\\
& = & \sum_{l_1, l_2\geq 0,~~l_1+l_2\leq n-1} q_{l_1, l_2}({\bf r})r_1^{l_1}r_2^{l_2},
\end{eqnarray*}

\noindent{where} 
\begin{equation}\label{e4}q_{l_1, l_2}({\bf r}) = p_{l_1, l_2+1}({\bf r})c_{1}(l_1,l_2)+
p_{l_1, l_2+2}({\bf r})c_{2}(l_1,l_2)+\cdots +p_{l_1, n-l_1}
({\bf r})c_{n-l_1-l_2}(l_1,l_2)
\end{equation}
\noindent{and} $c_{i}(l_1,l_2)$ are positive constants.
Therefore
$q_{l_1,l_2}:\Q^2_{> 0}\longto Q$ are periodic functions with period 
$(\tau_1,\tau_2)$ and, for every ${\bf r}\in \Q^2_{>0}$, 
$$q_{l_1, n-1-l_1}({\bf r}) = p_{l_1, n-l_1}({\bf r})
c_{1}(l_1,l_2) = C_{l_1}~\mbox{ = a constant, for every}~0\leq l_1 \leq n-1.$$
Therefore, by induction hypothesis, $q_{l_1,l_2}:E\cap \Q^2_{> 0}\longto \Q$ 
 is a polynomial of degree atmost $n-1-l_1-l_2$, for all 
 $(l_1,l_2)\in \Z^2_{\geq 0}$ such that $l_1+l_2\leq n-1$.
By (\ref{e4}) and  descending induction on $(l_1,l_2)$, we deduce that
$p_{l_1, l_2+1}({\bf r}):E\cap \Q^2\longto \Q$ is a polynomial of degree 
atmost $n-(l_1+l_2+1)$.
Similarly, by considering the function 
$q'(r_1, r_2) = p({r_1+\tau_1, r_2})-
p(r_1, r_2)$ , we deduce
that $p_{l_1+1, l_2}({\bf r}):E\longto \Q$ is a polynomial of degree atmost $n-(l_1+l_2+1)$.
Now, the function $p_{0,0}:\Q^2_{\geq 0}\longto \Q$ is given by 
$$p_{0,0}({\bf r}) = c_0-\sum_{(0,0)\neq (l_1,l_2)\in \Z^2_{\geq 0}, 
l_1+l_1\leq n}p_{l_1, 
l_2}({\bf r})r_1^{l_1}r_2^{l_2}$$ 
ane hence $p_{0,0}:E\cap \Q^2_{>0}\longto \Q$ is a polynomial of degree 
atmost $n$. This completes 
the proof of the lemma\end{proof}

\begin{thm}\label{t1}Let $P_1, P_2\subset \R^d$ be rational convex polytopes such that 
$\dim{(P_1+P_2)} = d$. Let 
$Q(P_1,P_2;-):\Q^2_{\geq 0} \longto \Q$ be the rational quasi-polynomial of degree $d$,
 given by 
$$Q(P_1, P_2,;{\bf r}) = \#((r_1P_1+r_2P_2)\cap \Z^d) = 
\sum_{(l_1, l_2)\in \Z^2_{\geq 0}}p_{l_1, l_2}({\bf r})r_1^{l_1}r_2^{l_2}.$$ 
 
Then there is a finite set ${\tilde S}$ consisting of  locally closed,
 bounded subsets of 
$\R^2_{>0}$, and a finite set of polynomials
$$\{f_{l_1, l_2}^U:U\longto \Q\mid U\in {\tilde S},~~~
(l_1, l_2)\in \Z^2_{\geq 0},~~~ l_1+l_2\leq n\},$$
 such that 
\begin{enumerate}\item 
$\R^2_{>0} = \cup_{U\in {\tilde S}}\cup_{u\in \Z^2_{\geq 0}}(U+{\bf u}
\odot {\bf\tau})$ and  
$$p_{l_1,l_2}({\bf r}) = f_{l_1, l_2}^U({\bf r}), ~~~\mbox{for every}~~~ {\bf r}\in U\in {\tilde S}.$$
\item In particular,  there exist nonnegative constants
$C_{l_1,l_2}$ such that for all ${\bf r}\in \Q^2_{>0}$,
 $$|p_{l_1, l_2}({\bf r})|\leq C_{l_1, l_2},~\mbox{and}~ 
p_{l_1,d-l_1}({\bf r}) = C_{l_1, d-l_1}.$$
\end{enumerate}
\end{thm}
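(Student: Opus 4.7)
The proof reduces to setting up the hypotheses of Lemma~\ref{l2} on finitely many pieces of the period rectangle $T = (0,\tau_1]\times(0,\tau_2]$ and then propagating the conclusion by periodicity. I would take
\[
{\tilde S} = \{S \cap T : S \in {\tilde C_P}\} \sqcup \{I \cap T : I \in {\tilde I_P}\} \sqcup \{\{{\bf t}\} : {\bf t} \in T_0 \cap T\},
\]
using the decomposition of $\R^2_{>0}$ from Notations~\ref{n2}. Because only finitely many lines $L_j({\bf z})$ meet the bounded set $T$, this is a finite collection of bounded, locally closed subsets of $\R^2_{>0}$, and the translates $U + {\bf u}\odot{\bf\tau}$ with ${\bf u}\in \Z^2_{\geq 0}$ evidently cover all of $\R^2_{>0}$, as required by part~(1).

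Next, for each $U \in {\tilde S}$ I would verify the key hypothesis of Lemma~\ref{l2}: for every ${\bf u}\in \Z^2_{\geq 0}$ there is a constant $c_{\bf u}$ with $Q(P_1, P_2; {\bf r} + {\bf u}\odot{\bf\tau}) = c_{\bf u}$ for all ${\bf r}\in U\cap\Q^2$. For $U = S \cap T$ this is exactly Lemma~\ref{l5}(1); for $U = I \cap T$ with $I \subset S_I\cup I$, Lemma~\ref{l5}(2) gives constancy on $(S_I \cup I)\cap T$ and its ${\bf u}\odot{\bf\tau}$-translates, which restricts to the required property on $U+{\bf u}\odot{\bf\tau}$; for singletons $U=\{{\bf t}\}$ it is trivial. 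The constant-leading-coefficient assumption of Lemma~\ref{l2} is supplied by Theorem~\ref{McMullen}, which moreover yields the identity $p_{l_1,d-l_1}({\bf r}) = C_{l_1,d-l_1}$ in part~(2). Applying Lemma~\ref{l2} to each $U$ then produces a polynomial $f_{l_1,l_2}^U : U \to \Q$ of degree at most $d - (l_1+l_2)$ agreeing with $p_{l_1,l_2}$ on $U \cap \Q^2$, proving part~(1). The boundedness assertion in part~(2) follows because each $f_{l_1,l_2}^U$ is continuous on the bounded set $U\subset T$, and periodicity of $p_{l_1,l_2}$ with period ${\bf\tau}$ promotes $C_{l_1,l_2} := \max_{U\in {\tilde S}} \sup_U |f_{l_1,l_2}^U|$ to a global bound on $\Q^2_{>0}$.

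The main obstacle, relative to Theorem~\ref{t3} of Henk--Linke, is treating the $1$-dimensional pieces $I \in {\tilde I_P}$ (lying on the lines $L_j({\bf z})$) together with the discrete points in $T_0$: Henk--Linke establish polynomial behaviour of the coefficients only on the interiors of open $2$-cells, and the boundary of the induced arrangement is not negligible, so even pointwise boundedness of $p_{l_1,l_2}$ cannot be extracted directly from their argument. The decisive improvement is packaged in Lemma~\ref{l5}(2), which shows $Q(P_1,P_2;-)$ is constant on $S_I \cup I$ (and on all its ${\bf u}\odot{\bf\tau}$-translates) rather than on the open cell $S_I$ alone; this fills the gap on the lower-dimensional strata and allows the decomposition ${\tilde S}$ to cover the entire period rectangle, making the uniform argument based on Lemma~\ref{l2} go through.
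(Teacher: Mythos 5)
Your proof is correct and follows essentially the same route as the paper: decompose the period rectangle $T$ into a finite family of locally closed pieces coming from the hyperplane arrangement of Notations~\ref{n2}, invoke Lemma~\ref{l5} for the constancy of $Q$ on ${\bf u}\odot{\bf\tau}$-translates of each piece, feed this into Lemma~\ref{l2} to get the polynomial description of the $p_{l_1,l_2}$, and extract the uniform bound from finiteness and boundedness together with periodicity. The only difference is cosmetic: the paper takes $\tilde{S}=\{(S_I\cup I)\cap T\}\cup\{T_0\cap T\}$, whereas you list $S\cap T$, $I\cap T$, and the singletons of $T_0\cap T$ as separate strata, which makes the covering of $T$ a bit more transparent but is otherwise the identical argument.
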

\begin{proof}
Let $${\tilde S} = \{(S_I\cup I)\cap T\mid S_I\in {\tilde C_P},~~I\in 
{\tilde I_P}~~(S_I\cup I)\cap T\neq \phi\}
\cup \{T_0\cap T\}.$$
Since $T_0$ is a discrete set of points, the set $T_0\cap T =$ a finite set of points.

By Lemmas~\ref{l5} and \ref{l2},  for every $U\in {\tilde S}$
there is a set of polynomials, 
$$\{f_{l_1, l_2}^U:U\longto \Q\mid ~~~
(l_1, l_2)\in \Z^2_{\geq 0},~~~ l_1+l_2\leq n\},$$
such that $p_{l_1, l_2}({\bf r}) = f_{l_1, l_2}^U({\bf r})$, for every ${\bf r}\in U$.

Now since each $p_{l_1, l_2}:\Q^2_{>0}\longto \Q$  is a periodic function of period 
$\tau = (\tau_1, \tau_2)$, we can choose 
$C_{l_1, l_2} = \max\{|f_{l_1, l_2}^U({\bf r})|\mid {\bf r}\in U,~~~U\in {\tilde S}\}$.
This proves the theorem.
\end{proof}

\begin{rmk}\label{rhl}Theorem~\ref{t1} can be generalised to the Minkowski sum of 
any finite number of polytopes, say $P_1, P_2, \ldots, P_n$ in $\R^d$ 
 where $\dim{(P_1+\cdots +P_n)} =d$:  
For this we express
$\R^d_{>0}$ as the disjoint union of locally closed 
sets, namely 
$$\R^d_{> 0} = (\bigcup_{S\in {\tilde C_n}}S)\cup 
(\bigcup_{S\in {\tilde C_{n-1}}}S)\cup\cdots (\bigcup_{S\in 
{\tilde C_1}}S)\cup S_0,$$
where ${\tilde C_k}$ denotes the set of $k$-cells obtained by the hyperplane 
arrangements as given in (\ref{ehyp}). 
Now, given any $I\in {\tilde C_k}$ there exists a unique 
cell $S_I\in {\tilde C_n}$ such that 
$I\subseteq {\bar S_I}$, the closure of $S_I$ in $\R^d_{>0}$, and
$$S_I\subseteq H_{j_1}({\bf z}_1)^{+o}\cap H_{j_2}({\bf z}_2)^{+o}\cap 
\cdots\cap H_{j_{n-k}}({\bf z}_{n-k})^{+o},$$
where
$I\subseteq  H_{j_1}({\bf z}_1)\cap H_{j_2}({\bf z}_2)\cap 
\cdots\cap H_{j_{n-k}}({\bf z}_{n-k}).$
Now one can check that 
 $Q(P_1, \ldots, P_n, {\bf s}) =$ constant for all ${\bf s}\in S_I\cup I$
and hence the proof follows imitating the rest of the arguments.
\end{rmk}

\begin{thm}\label{tcb}
Let $P$ be a  convex rational polytope in $\R^d = \R^{d-1}\times \R$ of dimension $d$ 
and let 
$$i(P_\lambda, n) = \sum_{i=0}^{\dim(P_\lambda)} 
C_i(P_\lambda,n)n^i,\ \ \text{for}\ \ n\in\Z_{\geq 0},$$
be the  Ehrhart quasi-polynomial for $P_\lambda := P\cap \{z=\lambda\} 
\subset \R^{d-1}\times \{\lambda\}$, for $\lambda \in \Q_{\geq 0}$. Then 
there are  constants, independent of $n$,  ${\tilde c_i}(P)$ and   
${\tilde c_{d-1}}(P_\lambda)$ such that 
for $0\leq i \leq d-1$, 
$$|C_i(P_\lambda, n)|\leq {\tilde c_i}(P),\quad
\mbox{and}\quad C_{d-1}(P_\lambda, n)= {\tilde c_{d-1}}(P_\lambda)
\quad\mbox{provided}~~~\lambda n \in \Z_{\geq 0}.$$  
\end{thm}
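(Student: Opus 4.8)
The one genuinely new ingredient here is Theorem~\ref{t1}; granting it, the statement reduces to a slicing argument together with bookkeeping of quasi-polynomial coefficients. Write $\pi:\R^d=\R^{d-1}\times\R\longto\R^{d-1}$ for the projection forgetting the last (the $z$-)coordinate, and let $\mu_0<\mu_1<\cdots<\mu_t$ be the distinct values of $z$ on the vertex set of $P$; these are rational since $P$ is, and $t\geq 1$ since $\dim P=d$. If $\lambda\notin[\mu_0,\mu_t]$ then $P_\lambda=\phi$ and every $C_i(P_\lambda,n)=0$. If $\lambda=\mu_j$ for some $j$, then $P_\lambda$ is one of finitely many fixed rational polytopes, so by Theorem~\ref{ehrhart} each $C_i(P_{\mu_j},n)$ is periodic in $n$, hence bounded, and $C_{d-1}(P_{\mu_j},n)=\mbox{rVol}_{d-1}(P_{\mu_j})$ (which is $0$ if $\dim P_{\mu_j}<d-1$). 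These finitely many bounds are absorbed at the end, so from now on I fix an open chamber $(\mu_{k-1},\mu_k)$ and let $\lambda$ range over it.

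\emph{The slicing lemma.} The first step is to show that for $\mu_{k-1}\le\lambda\le\mu_k$,
$$\pi(P_\lambda)=t_1A_k+t_2B_k,\qquad A_k:=\pi(P_{\mu_{k-1}}),\ \ B_k:=\pi(P_{\mu_k}),$$
where $t_1=\tfrac{\mu_k-\lambda}{\mu_k-\mu_{k-1}}$ and $t_2=\tfrac{\lambda-\mu_{k-1}}{\mu_k-\mu_{k-1}}$, so $t_1+t_2=1$ and $0<t_1,t_2<1$ on the open chamber; here $A_k,B_k\subset\R^{d-1}$ are rational and $\dim(A_k+B_k)=d-1$. To prove this, put $P':=P\cap\{\mu_{k-1}\le z\le\mu_k\}$. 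A vertex $w$ of $P'$ is either a vertex of $P$ — then $z(w)\in[\mu_{k-1},\mu_k]$, hence $z(w)\in\{\mu_{k-1},\mu_k\}$ since no vertex of $P$ has $z$-coordinate strictly between $\mu_{k-1}$ and $\mu_k$ — or it is not a vertex of $P$, in which case it cannot lie in the open slab ($P'$ coincides with $P$ in a neighbourhood of such a point), so again $z(w)\in\{\mu_{k-1},\mu_k\}$. Thus every vertex of $P'$ lies in $P_{\mu_{k-1}}\cup P_{\mu_k}$, whence $P'=\mbox{conv}(P_{\mu_{k-1}}\cup P_{\mu_k})$. Writing a point of $P'\cap\{z=\lambda\}$ as $(1-s)a+sb$ with $a\in P_{\mu_{k-1}}$, $b\in P_{\mu_k}$, $s\in[0,1]$, the equation $z=\lambda$ forces $s=t_2$; applying $\pi$ gives the claimed identity. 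Finally, for $\lambda$ in the open chamber $\{z=\lambda\}$ meets $P^\circ$, so $\dim\pi(P_\lambda)=d-1$ and $\dim(A_k+B_k)=\dim(t_1A_k+t_2B_k)=d-1$.

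\emph{Identification and conclusion.} Fix $n\geq1$ with $\lambda n\in\Z_{\geq 0}$ (the case $n=0$ is trivial). Since $nP_\lambda\subseteq\{z=n\lambda\}$ with $n\lambda\in\Z$, the points of $nP_\lambda\cap\Z^d$ correspond bijectively to those of $n\pi(P_\lambda)\cap\Z^{d-1}$, so with $r_1=nt_1$, $r_2=nt_2\in\Q_{>0}$,
$$i(P_\lambda,n)=\#\big(n\pi(P_\lambda)\cap\Z^{d-1}\big)=\#\big((r_1A_k+r_2B_k)\cap\Z^{d-1}\big)=Q(A_k,B_k;r_1,r_2).$$
Expanding $Q(A_k,B_k;{\bf r})=\sum_{l_1+l_2\le d-1}p^{(k)}_{l_1,l_2}({\bf r})r_1^{l_1}r_2^{l_2}$ by Theorem~\ref{McMullen}, substituting $r_i=nt_i$, and collecting powers of $n$ gives a quasi-polynomial expansion of $i(P_\lambda,n)$ valid along the progression $\{n\in\Z_{\geq0}:\lambda n\in\Z\}$ (whose coefficients are periodic there, since $t_1,t_2\in\Q$ and the $p^{(k)}_{l_1,l_2}$ are periodic); comparing with $i(P_\lambda,n)=\sum_iC_i(P_\lambda,n)n^i$ and using uniqueness of such expansions yields $C_i(P_\lambda,n)=\sum_{l_1+l_2=i}p^{(k)}_{l_1,l_2}(nt_1,nt_2)\,t_1^{l_1}t_2^{l_2}$ for all such $n$. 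Now Theorem~\ref{t1}, applied to the pair $A_k,B_k$ (its ``$d$'' being our $d-1$), supplies constants $C^{(k)}_{l_1,l_2}$ with $|p^{(k)}_{l_1,l_2}({\bf r})|\le C^{(k)}_{l_1,l_2}$ on $\Q^2_{>0}$, and with $p^{(k)}_{l_1,d-1-l_1}$ equal to the constant $C^{(k)}_{l_1,d-1-l_1}$. Since $0<t_1,t_2<1$, this gives $|C_i(P_\lambda,n)|\le\sum_{l_1+l_2=i}C^{(k)}_{l_1,l_2}$ for $0\le i\le d-1$ and $C_{d-1}(P_\lambda,n)=\sum_{l_1}C^{(k)}_{l_1,d-1-l_1}t_1^{l_1}t_2^{d-1-l_1}$, which is independent of $n$. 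Taking $\tilde c_i(P)$ to be the maximum of these bounds over the finitely many chambers $k$ (together with the bounds from the $P_{\mu_j}$ of the first paragraph), and $\tilde c_{d-1}(P_\lambda)$ to be the chamber expression just displayed (equivalently $\mbox{rVol}_{d-1}(P_\lambda)$), completes the proof.

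I expect the main obstacle to be the slicing lemma — concretely, the identity $P\cap\{\mu_{k-1}\le z\le\mu_k\}=\mbox{conv}(P_{\mu_{k-1}}\cup P_{\mu_k})$, which is what makes $\pi(P_\lambda)$ a genuine (scaled) Minkowski sum of two \emph{fixed} rational polytopes of combined dimension $d-1$ — together with the care needed at the chamber endpoints, where $\dim P_\lambda$ may drop and $t_1,t_2$ reach $0$ or $1$, so that Theorem~\ref{t1}, stated over $\Q^2_{>0}$, does not apply directly; those finitely many $\lambda$ are disposed of separately, as in the first paragraph. Once the identification $i(P_\lambda,n)=Q(A_k,B_k;nt_1,nt_2)$ is in hand, the asserted boundedness is an immediate consequence of Theorem~\ref{t1}.
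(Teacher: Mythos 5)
Your proposal is correct and follows essentially the same route as the paper's proof: reduce to the sub-polytopes $P\cap\{\mu_{k-1}\le z\le\mu_k\}$ whose vertices lie in two parallel hyperplanes, observe that each slice is a scaled Minkowski sum of the two boundary slices (projected to $\R^{d-1}$), identify $i(P_\lambda,n)$ with an evaluation of the rational Ehrhart quasi-polynomial $Q$ of that pair, invoke Theorem~\ref{t1} for uniform bounds, and absorb the finitely many chamber endpoints via ordinary Ehrhart theory. The only cosmetic difference is that you supply a fuller justification of the identity $P\cap\{\mu_{k-1}\le z\le\mu_k\}=\mathrm{conv}(P_{\mu_{k-1}}\cup P_{\mu_k})$ (which the paper states without elaboration as its condition $(\star)$ decomposition), so nothing is missing.
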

\begin{proof}We say a polytope $P$  satisfies $(\star)$ condition if 
all the vertices of $P$ lie in the union of two hyperplanes $\{z=a_1\}\cup\{z=a_2\}$,
 for some rational numbers $a_1< a_2$.

\vspace{5pt}

First we prove the theorem for
 $P$ with the additional $(\star)$ condition. 
 
Let $P_1 = P\cap \{z = a_1\}$ and $P_2 = P\cap \{z = a_2\}$.
Then 
$$P_1 = \mbox{convex hull}\{({\bf v}_1,a_1), \ldots,  ({\bf v}_m,a_1)\}~~~
\mbox{and}~~~P_2 = \mbox{convex hull}\{({\bf w}_1,a_2), 
\ldots,  ({\bf w}_n,a_2)\}, $$
for a set of some rational points $\{{\bf v}_i, {\bf w}_j\}_{i,j}\subset 
\R^{d-1}$.

Note that, for $\lambda \in [a_1, a_2]$, there is a decomposition as a 
Minkowski sum
$$P\cap \{z =\lambda\} = r_{\lambda}P_1+r'_{\lambda}P_2,\quad 
\mbox{ where}\quad
r_{\lambda}=\frac{a_2-\lambda}{a_2-a_1},\quad 
r'_{\lambda}=\frac{\lambda-a_1}{a_2-a_1}.$$
Let 
${\tilde P_1} = \mbox{convex hull of}~\{{\bf v}_1, \ldots,  {\bf v}_m\}~~~
\mbox{and}~~~{\tilde P_2} = \mbox{convex hull}~\{{\bf w}_1, 
\ldots,  {\bf w}_n\}.$
Note that $\dim{({\tilde P_1}+{\tilde P_2})} = d-1$, as there exists an 
isomtric map
${\tilde P_1}+{\tilde P_2} \longto P_1+P_2$ given by 
$$\sum_i\lambda_i{\bf v}_i+ \sum_j\mu_j{\bf w}_j\mapsto 
(\sum_i\lambda_i{\bf v}_i+ \sum_j\mu_j{\bf w}_j, a_1+a_2) = 
(\sum_i\lambda_i{\bf v}_i, a_1)+ (\sum_j\mu_j{\bf w}_j, a_2)$$
and therefore 
$\dim{(P_1+P_2)} = \dim{(2(P\cap\{z={a_1+a_2}/{2}\}))} = d-1$.
Hence, by Theorem~\ref{t1}, we have 
\begin{equation}\label{ec}Q({\tilde P_1}, {\tilde P_2},{\bf r}) := 
|(r_1{\tilde P_1}+r_2{\tilde P_2})\cap \Z^{d-1}| =
\sum_{l_1+l_2\leq d-1}p_{l_1, l_2}({\bf r})r_1^{l_1}r_2^{l_2},\quad\mbox{for}\quad
{\bf r}\in \Q^2_{\geq 0},\end{equation} 
where,  there exist  constants $C_{l_1, l_2}$ and $C_{l_1}$ such that
$$|p_{l_1, l_2}({\bf r})| \leq C_{l_1, l_2}\quad{and}\quad 
 p_{l_1,d-1-l_1}({\bf r}) = C_{l_1}\quad\mbox{for all}\quad {\bf r}\in\Q^2_{>0}.$$
  
If $\lambda n\in \Z_{> 0}$, then one can check that 
$$i(P\cap \{z = \lambda\}, n) = \#((r_{\lambda}nP_1+r'_{\lambda}nP_2)
\cap \Z^{d}) = 
\#((r_{\lambda}n{\tilde P_1}+r'_{\lambda}n{\tilde P_2})\cap \Z^{d-1}).$$
Therefore, by (\ref{ec}),
$$i(P\cap \{z = \lambda\}, n) = \sum_{l_1+l_2\leq d-1}p_{l_1, l_2}
(r_{\lambda}n, {r'_\lambda} n)r_{\lambda}^{l_1}
{r'_\lambda}^{l_2}n^{l_1+l_2} = 
\sum_{i=0}^{d-1}{C_i}(P_\lambda, n)n^i,$$
where 
$${C_i}(P_\lambda, n) = \sum_{l_1+l_2=i}p_{l_1, l_2}
(r_{\lambda}n, {r'}_{\lambda}n)r_{\lambda}^{l_1}{r'_\lambda}^{l_2}.$$
 Now $ a_1 < \lambda < a_2$ and $n\in \Z_{>0}$ implies  $(r_{\lambda}n, 
{r'_\lambda}n)
\in \Q^2_{>0}$ and therefore, by Theroem~\ref{t1} and Theorem~\ref{tl}, 
there exist constants $c_i'(P)$ and 
$c'_{d-1}(P_\lambda)$ such that 
$|{C_i}(P_\lambda, n)|\leq c'_i(P)$ and 
${C_{d-1}}(P_\lambda, n) = c'_{d-1}(P_\lambda) = \mbox{rVol}_{d-1}(P_\lambda)$
(see Definition~\ref{rv} for a discussion of the relative volume 
$\mbox{rVol}_{d-1}$).

Now ${\tilde c_i}(P) := 
\max\{c'_i(P), C_i(P_{a_1}, n), C_i(P_{a_2}, n)\}$,  
is finite, by the 
theory of Ehrhart polynomials for the rational polytopes 
$P_{a_1}$ and $P_{a_2}$. Moreover, $C_{d-1}(P_{a_i}, n)$ is constant 
($= 0$, if $\dim(P_{a_i}) <d-1$).

This proves the theorem for a rational polytope $P$ which satisfies the  
condition $(\star)$.

Consider the projection map $\pi:\R^d\longto \R$ to the last coordinate.
Let $b_1<b_2<\cdots <b_l$, where $b_i$ are 
the images of the vertices of the polytope $P$. 
Now $P_{b_m}^{b_{m+1}}:=P\cap \{b_m\leq z \leq b_{m+1}\}$ satisfy the 
condition $(\star)$. 
Hence the proof of the theorem follows by taking 
$${\tilde c_i}(P) = \max\{{\tilde c_i}(P_{b_m}^{b_{m+1}})\mid 
1\leq m\leq l-1\}\quad\mbox{and}\quad 
{\tilde c_{d-1}}(P_\lambda) =  {\tilde c_{d-1}}((P_{b_m}^{b_{m+1}})_\lambda),~~\mbox{ if}~~\lambda \in 
[b_m, b_{m+1}].$$
\end{proof}

\section{Main theorem}

We now resume the study of Eto's set $\sP_D$, and the decompositions 
$\sP_D = \cup_{j=1}^sP'_j$ and ${\bar \sP_D} = \cup_{j=1}^sP_j$, as 
discussed in Section~3. We will make use of properties of 
relative volumes, recalled below in the  appendix (see Lemmas~\ref{rvol} and \ref{la}).
 
\begin{notations}\label{d4}\begin{enumerate}
\item  $F(Q) = \{\mbox{the facets of}~Q\}$ and $v(Q) = \{\mbox{the 
vertices of}~Q\}$, where 
$Q$ is a convex polytope.
\item 
Let $v(\sP_D) := \cup_{j=1}^s\pi(v(P_j))$, where 
 $\pi:\R^d \longto \R$ is the projection 
map given by projecting  to the last coordinate $z$ and the set 
$\pi(v(P_j)) = \{\rho_{j_1}, \ldots, \rho_{j_m}\},\quad\mbox{with}~
\rho_{j_1}<\rho_{j_2}<\cdots < \rho_{j_{m_j}}$. 
\item Let $S = \{m/q\mid q = p^n, ~~m, n\in \Z_{\geq 0}\}\setminus v(\sP_D)$.
\end{enumerate}
\end{notations}

\begin{lemma}\label{lr1}Let $P_j$ be a convex polytope as given in 
Notations~\ref{n3}~(8). Let 
$$i(P_j, n, \lambda) = i(P_{j\lambda}, n)) = \sum_{i=0}^{\dim(P_{j\lambda})} 
C_i(P_{j\lambda},n)n^i~~~\mbox{for}~~ n\in\Z_{\geq 0}$$
be the  Ehrhart quasi-polynomial for 
$P_{j\lambda} = P_j\cap \{z=\lambda\}$, for $\lambda\in \R_{\geq 0}$.
Then 
for $\lambda \in S$ and $q\lambda \in \Z$,
we have
\begin{enumerate}
\item  
$\sum_{j=1}^sC_{d-1}(P_{j\lambda}, q) = 
f_{R, {\bf m}}(\lambda)$.
\item $C_{d-2}(P_{j\lambda}, q) = \frac{1}{2}\sum_{{\tilde F}\in F(P_j)}
rVol_{d-2}({\tilde F}\cap\{z=\lambda\})$.
\item For every $i\leq d-3$, we have $|C_i(P_{j\lambda}, q)| \leq 
{\tilde c_i}(P_{j})$ for some 
constants ${\tilde c_i}(P_{j})$ independent of $\lambda \in S$.
\end{enumerate}
\end{lemma}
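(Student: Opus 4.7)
The plan is to treat the three assertions separately, each as a direct consequence of an ingredient already in the paper: Ehrhart's Theorem \ref{ehrhart} for (1) and (2), and the uniform boundedness Theorem \ref{tcb} for (3). Throughout, the restriction $\lambda\in S$ is what lets us work with ``generic'' slices of the $P_j$'s, so we never land at a $z$-coordinate of a vertex.

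For (1), I would apply the top-coefficient formula in Theorem \ref{ehrhart} to the rational polytope $P_{j\lambda}$. Since $P_j$ is $d$-dimensional and, by Lemma \ref{l21}, each of its facets lies in a hyperplane transversal to $\{z=0\}$, the slice $P_{j\lambda}$ has dimension $d-1$ for every $\lambda\in S$ with affine hull $\{z=\lambda\}$; since $q\lambda\in\Z$, the affine hull of $qP_{j\lambda}$ trivially contains lattice points, so $C_{d-1}(P_{j\lambda},q)=\mbox{rVol}_{d-1}(P_{j\lambda})$. Lemma \ref{l6} then ensures distinct $P_i,P_j$ meet only in a common facet, and Lemma \ref{l8} forces the $\{z=\lambda\}$-slice of that facet to have dimension at most $d-2$, so the $P_{j\lambda}$ have pairwise disjoint $(d-1)$-interiors inside $\{z=\lambda\}$. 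Summing and using $\bar\sP_D=\cup_j P_j$ together with Theorem \ref{thk} then identifies $\sum_j C_{d-1}(P_{j\lambda},q)$ with $f_{R,{\bf m}}(\lambda)$.

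For (2), I would again invoke Theorem \ref{ehrhart}, this time the second-coefficient formula for $P_{j\lambda}$. The nontrivial hypothesis to verify is $A(F')\cap\Z^d\neq\emptyset$ for every facet $F'$ of $qP_{j\lambda}$. But every such $F'$ has the form $q\tilde F\cap\{z=q\lambda\}$ for some $\tilde F\in F(P_j)$ with $\dim(\tilde F\cap\{z=\lambda\})=d-2$, and Lemma \ref{l8}(3) produces an integer point in $A(q\tilde F)\cap\{z=q\lambda\}$. Thus Ehrhart's formula gives $C_{d-2}(P_{j\lambda},q)=\tfrac12\sum_{F'\in F(P_{j\lambda})}\mbox{rVol}_{d-2}(F')$; extending the sum over all of $F(P_j)$ only adjoins facets whose slice has dimension $\leq d-3$, contributing zero $(d-2)$-volume. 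The restriction $\lambda\in S$ rules out the exceptional case of Lemma \ref{l8}(1), where a facet could collapse onto the slice hyperplane. Finally, (3) is immediate from Theorem \ref{tcb} with $P=P_j$: for each $0\leq i\leq d-3$ it delivers a constant $\tilde c_i(P_j)$, depending only on $P_j$, such that $|C_i(P_{j\lambda},q)|\leq \tilde c_i(P_j)$ whenever $q\lambda\in\Z_{\geq 0}$.

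The main obstacle, as I see it, is the bookkeeping in (2): carefully matching facets of the slice $P_{j\lambda}$ with facets of $P_j$, and confirming that the condition $\lambda\in S$ really excludes every slicing pathology (collapsed facets, coincidences of $z$-coordinates of vertices with $\lambda$, lower-dimensional $P_i\cap P_j$ contributing spuriously). All substantive inputs --- the transversality of facet hyperplanes from Lemma \ref{l8}, Ehrhart's theorem in its precise form, and the uniform boundedness Theorem \ref{tcb} --- are already in hand, so the remainder is a matter of assembling them.
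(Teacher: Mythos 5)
Your argument matches the paper's: Theorem~\ref{ehrhart} supplies parts (1) and (2), with Lemma~\ref{l8}(3) furnishing lattice points in the affine hulls of the slice facets so that the half-sum formula for $C_{d-2}$ applies, and part (3) is exactly Theorem~\ref{tcb}. Two small points to tidy up when writing this out: $\dim P_{j\lambda}=d-1$ only when $\lambda\in(\rho_{j_1},\rho_{j_{m_j}})$, so you still need the (trivial) empty-slice case as in the paper, with $\lambda\in S$ merely excluding the boundary values $\rho_{j_i}\in\pi(v(P_j))$; and the ``collapsed facet'' scenario you attribute to Lemma~\ref{l8}(1) cannot occur for facets of $P_j$ --- that item concerns $(\leq d-2)$-dimensional subsets of facets, and every facet of $P_j$ already lies in a hyperplane transversal to $\{z=0\}$.
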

\begin{proof} Let $\lambda = m_0/q_0$ then $q\lambda \in \Z$ implies 
$q\geq q_0$. 

\vspace{5pt}

\noindent{(1)}  
By Theorem~{1.1} of [MT], for any $\lambda \in \R$, we have 
$f_{R, {\bf m}}(\lambda) = \mbox{Vol}_{d-1}(\sP_D\cap\{z=\lambda\})$.
Also,  by the proof of 
Theorem~1.1 of [MT] (see the proof of the claim there), we have 
$$C_{d-1}(P_{j\lambda},q) = C_{d-1}(P_{j\lambda})
 =  rVol_{d-1}(P_{j\lambda})~~\mbox{and}~~ 
 \sum_jrVol_{d-1}(P_{j\lambda}) = \mbox{Vol}_{d-1}(\sP_D\cap\{z=\lambda\}).$$

\noindent{(2)}~Case~(a):~~If $\lambda \in (\rho_{j_1}, \rho_{j_{m_j}})$, where 
$\{\rho_{j_1}, \ldots, \rho_{j_{m_j}}\} = \pi(v(P_j))$ (as in Notations~\ref{d4})
then 
$\dim{P_{j\lambda}} = d-1$. 
Note that the set of facets of $P_{j\lambda}$
$$ F(P_{j\lambda}) = \{{\tilde F}\cap\{z=\lambda\}
\mid  {\tilde F}\in F(P_{j}),~~\dim{\tilde F}\cap
\{z= \lambda\}=d-2\}.$$ 
By Lemma~\ref{l8}~(3), for any $F \in F(P_{j\lambda})$, we have
 $A(q_0F\cap\{z=m_0\})\cap \Z^d \neq \phi$.
Hence, for all $q\geq q_0$, Theorem~\ref{ehrhart} implies 
$$C_{d-2}(q_0P_{j\lambda}, \frac{q}{q_0}) = 
\sum_{F\in F(P_{j\lambda})}\frac{rVol_{d-2}(q_0F_{\lambda})}{2} = 
\frac{q_0^{d-2}}{2}\sum_{F\in F(P_{j\lambda})}rVol_{d-2}(F_{\lambda}).$$
Moreover $rVol_{d-2}({\tilde F}\cap\{z=\lambda\}) = 0$ if 
$\dim({\tilde F}\cap\{z=\lambda\}) <d-2$. 

\noindent{(2)}~Case~(b):~~For $\lambda \not\in [\rho_{j_1}, \rho_{j_{m_j}}]$ we  know
$i(P_{j\lambda},q)=0$, for all $q$.

\noindent{(3)} follows by Theorem~\ref{tcb}.
This proves the lemma.\end{proof}

\begin{lemma}\label{lr2} Let $Q_{\alpha} = P_i\cap P_j$ or
$E_{j\nu}$ (as in Notations~\ref{n3} and Remark~\ref{r1}), where  $P_i \neq P_j$. Let 
$i(Q_{\alpha\lambda}, n)) = \sum_{i=0}^{d-2} 
C_i(Q_{\alpha\lambda},n)n^i$ be the
  Ehrhart quasi-polynomial
for
$Q_{\alpha\lambda} = Q_\alpha \cap \{z= \lambda\}$ where $\lambda 
\in \Q_{\geq 0}$. 
Then, for $\lambda \in S$ and $q\lambda \in \Z_{\geq 0}$, we have 
\begin{enumerate}
\item $C_{d-2}(Q_{\alpha\lambda}, q) =  
rVol_{d-2}(Q_{\alpha\lambda})$. Moreover 
\item \begin{enumerate}
\item if $\dim{Q_\alpha} = d-1$ then $|C_i(Q_{\alpha\lambda}, q)|
\leq {\tilde c_i}(Q_\alpha)$, for every $i\leq d-3$, 
 and 
\item if $\dim{Q_\alpha} \leq d-2$ then 
$i(Q_{\alpha \lambda}, q) \leq C_\alpha q^{d-3}$.
\end{enumerate}\end{enumerate}
\end{lemma}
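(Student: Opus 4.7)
The plan is to handle the three clauses separately, using the dimensional geometry of $Q_\alpha$ together with the preceding structural lemmas and the uniform boundedness provided by Theorems \ref{ehrhart}, \ref{t1} and \ref{tcb}.

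For part (1), I would distinguish cases by $\dim Q_{\alpha\lambda}$. If $\dim Q_{\alpha\lambda}<d-2$, then $i(Q_{\alpha\lambda},n)$ is a quasi-polynomial of degree $<d-2$, so $C_{d-2}(Q_{\alpha\lambda},q)=0$, and simultaneously $\mbox{rVol}_{d-2}(Q_{\alpha\lambda})=0$ since the relative volume of a lower-dimensional set vanishes. If $\dim Q_{\alpha\lambda}=d-2$, then $C_{d-2}$ is the leading Ehrhart coefficient; to conclude from Theorem~\ref{ehrhart} that it equals $\mbox{rVol}_{d-2}(Q_{\alpha\lambda})$ I must check the integer-point condition $A(qQ_{\alpha\lambda})\cap\Z^d\neq\phi$. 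Observe that $Q_\alpha$ is contained in a facet of some $P_j$ (directly for $Q_\alpha=E_{j\nu}$, and via Lemma~\ref{l6} when $Q_\alpha=P_i\cap P_j$ has dimension $d-1$, or by completing to a facet otherwise), so Lemma~\ref{l8}(3) furnishes an integer point in $A(qQ_\alpha)\cap\{z=q\lambda\}$, which lies in $A(qQ_{\alpha\lambda})$.

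For part (2)(a), with $\dim Q_\alpha=d-1$, I would adapt the proof of Theorem~\ref{tcb} directly to $Q_\alpha$ viewed as a full-dimensional polytope inside its affine hull $A(Q_\alpha)$. Subdividing $Q_\alpha$ at the finitely many $z$-coordinates of its vertices reduces to pieces satisfying the $(\star)$ condition between two parallel hyperplanes $\{z=a_1\}$ and $\{z=a_2\}$; on each piece the slice $Q_{\alpha\lambda}$ is an (affinely equivalent to a) Minkowski sum $r_\lambda{\tilde Q}^{(1)}+r'_\lambda{\tilde Q}^{(2)}$ in $\R^{d-1}$ of dimension $d-2$. Applying Theorem~\ref{t1} to this two-term Minkowski sum gives a uniform bound on every coefficient of the rational Ehrhart quasi-polynomial, and reassembling across the finitely many $(\star)$-pieces yields constants ${\tilde c}_i(Q_\alpha)$, depending only on $Q_\alpha$, such that $|C_i(Q_{\alpha\lambda},q)|\leq{\tilde c}_i(Q_\alpha)$ for all $i\leq d-3$ and all permissible $(\lambda,q)$.

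For part (2)(b), with $\dim Q_\alpha\leq d-2$, the key observation is that $\lambda\in S$ forces $\dim Q_{\alpha\lambda}\leq d-3$. Indeed, by Lemma~\ref{l8}(1) the slice can reach dimension $d-2$ at most at one value $\lambda_0$, and then $Q_\alpha\subset\{z=\lambda_0\}$; but every vertex of $Q_\alpha$ is a vertex of some $P_j$, so $\lambda_0\in\pi(v(P_j))\subseteq v(\sP_D)$, contradicting $\lambda\in S$. Thus $i(Q_{\alpha\lambda},q)$ is a quasi-polynomial in $q$ of degree at most $d-3$, and the same $(\star)$-decomposition plus Theorem~\ref{t1} argument (this time in one lower codimension) provides a uniform bound on its coefficients, yielding $i(Q_{\alpha\lambda},q)\leq C_\alpha q^{d-3}$ for a constant $C_\alpha$ depending only on $Q_\alpha$.

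The main technical obstacle is the uniformity in $\lambda$ in parts (2)(a) and (2)(b): the individual Ehrhart coefficients $C_i(Q_{\alpha\lambda},q)$ vary with $\lambda$, and one cannot simply invoke the classical Ehrhart theorem polytope by polytope. The entire force of Theorem~\ref{t1}, which produces only finitely many polynomial values for the coefficients of a rational Ehrhart quasi-polynomial of a Minkowski sum, is exactly what overcomes this; the required bookkeeping is to set up the $(\star)$-decomposition of $Q_\alpha$ cleanly so that the Minkowski-sum representation of $Q_{\alpha\lambda}$ has the right dimension for Theorem~\ref{t1} to apply.
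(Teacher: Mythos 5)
Your treatment of part (1) coincides with the paper's: split on $\dim Q_{\alpha\lambda}$, use Lemma~\ref{l8}(3) to supply a lattice point in $A(qQ_{\alpha\lambda})$ when $\dim Q_{\alpha\lambda}=d-2$, invoke Theorem~\ref{ehrhart}, and note both sides vanish in the lower-dimensional case. For part (2)(a) the idea is also the same, though the paper is more economical: rather than re-running the $(\star)$-decomposition of Theorem~\ref{tcb} inside $A(Q_\alpha)$, it first applies Lemma~\ref{rvol} to transport $Q_\alpha$ by a lattice-preserving affine isomorphism to a full-dimensional rational polytope in $\R^{d-1}$ (with the $z$-slicing intact), and then quotes Theorem~\ref{tcb} as a black box. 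Your ``view $Q_\alpha$ as full-dimensional inside its affine hull'' elides exactly the lattice-compatibility and $z$-compatibility issues that Lemma~\ref{rvol} is there to handle, so you should really cite it.

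Part (2)(b) is where you have a genuine gap. You deduce $\dim Q_{\alpha\lambda}\leq d-3$ via Lemma~\ref{l8}(1) followed by the assertion that ``every vertex of $Q_\alpha$ is a vertex of some $P_j$,'' so that the exceptional $\lambda_0$ lands in $v(\sP_D)$. But $Q_\alpha=P_i\cap P_j$, and vertices of an intersection of two polytopes are not in general vertices of either factor; making this work would require proving that $P_i\cap P_j$ is a common face of $P_i$ and $P_j$ even when $\dim(P_i\cap P_j)\leq d-2$, which is not supplied by Lemma~\ref{l6} (that lemma only treats $\dim = d-1$) nor by any other cited result. The paper's argument is both more elementary and unconditionally valid: every $P_j$ contains the origin of the cone $C_D$, hence so does $Q_\alpha=P_i\cap P_j$; since $Q_\alpha$ also contains points with $z>0$ once $\dim Q_\alpha\geq 1$, its affine hull is transversal to the hyperplanes $\{z=\lambda\}$, so $\dim Q_{\alpha\lambda}<\dim Q_\alpha\leq d-2$ for \emph{every} $\lambda$, not just $\lambda\in S$. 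The bound $i(Q_{\alpha\lambda},q)\leq C_\alpha q^{d-3}$ then follows directly from the elementary box-counting Lemma~\ref{la}; the Minkowski-sum machinery of Theorem~\ref{t1} (which you invoke ``in one lower codimension,'' raising the same affine-hull lattice issues as in (2)(a)) is not needed here at all.
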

\begin{proof}Let $\lambda = m_0/q_0$ and let $q\geq q_0$.

\noindent{(1)}~
Case~(a):~~If  $\dim{Q_{\alpha\lambda}} = d-2$ then 
by Lemma~\ref{l8}~(3), for $q\lambda\in \Z_{\geq 0}$,
 $A(q_0Q_{\alpha\lambda})\cap \Z^d = A(Q_{\alpha})\cap\Z^d \neq \varphi$. Hence, by Theorem~\ref{ehrhart}
\begin{equation}\label{evol}C_{d-2}(q_0Q_{\alpha\lambda}, q/q_0) =  
C_{d-2}(q_0Q_{\alpha\lambda}) =
rVol_{d-2}(q_0Q_{\alpha\lambda}) = 
q_0^{d-2}rVol_{d-2}(Q_{\alpha\lambda}).\end{equation}

\noindent{(1)}~Case~(b):~~Let $\dim{Q_{\alpha\lambda}} < d-2$.
By Lemma~\ref{la}, for a convex rational polytope $Q_\alpha$, there exists a 
constant $C_\alpha$ such that 
$i(Q_{\alpha \lambda}, q) \leq C_\alpha q^{\dim{Q_{\alpha\lambda}}}$.
Therefore $C_{d-2}(Q_{\alpha\lambda}, q) = 0 = 
rVol_{d-2}(Q_{\alpha\lambda})$.
This proves the first assertion.

\vspace{5pt}

\noindent{(2)}\quad if $\dim{Q_\alpha} = d-1$ then 
by Lemma~\ref{rvol}, there exists a map $\varphi_\alpha:\R^d\longto \R^{d-1}$ and 
$z_\alpha \in \Z^{d-1}$ such that  
$i(Q_{\alpha\lambda},q) = i(\varphi_{\alpha}(Q_\alpha)_\lambda, q)$,
 where 
$\varphi_\alpha(Q_\alpha)$ is a $d-1$-dimensional  polytope in $\R^{d-1}$
Hence Assertion~2~(a) follows from Theorem~\ref{tcb}.
If $\dim{Q_\alpha} \leq  d-2$ then $Q_\alpha = P_i\cap P_j$, as 
$\dim{E_{j\nu}} = d-1$ if $E_{j\nu}\neq \varphi$.  Without loss of 
generality we can assume that $\dim{Q_\alpha} \geq 1$. Since 
$Q_\alpha$ is tranversal to the hyperplane $\{z = 0\}$, 
we have $\dim{Q_{\alpha\lambda}} < \dim{Q_\alpha} $, 
for all $\lambda $. Hence, by Lemma~\ref{la}, we have Assertion~(2)~(b).
This completes the proof of the lemma.\end{proof}

\begin{lemma}\label{lr3}For $Q'_\alpha$, where $\alpha \in I_1$, we have
 $i(Q'_{\alpha\lambda}, q) 
= {\tilde c'}_{\alpha}(\lambda)q^{d-3}$,
where $|{\tilde c'}_{\alpha}(\lambda)|< {\tilde c_\alpha}$, 
for all $\lambda\in\Q_{\geq 0}$.
\end{lemma}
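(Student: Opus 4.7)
The plan is to adapt the dimensional argument from the proof of Lemma~\ref{lr2}(2)(b): the key point is that every $Q'_\alpha$ has ambient dimension at most $d-2$ and is transversal to the hyperplane $\{z=0\}$, which forces every slice to have dimension at most $d-3$, after which a standard Ehrhart bound finishes the job. First I would establish $\dim Q'_\alpha\leq d-2$. By Remark~\ref{r1} there are two cases. If $Q'_\alpha=P_{j_1}\cap\cdots\cap P_{j_l}$ with $l\geq 3$ distinct $P_{j_i}$, then Lemma~\ref{l6} gives $\dim(P_{j_1}\cap P_{j_2})\leq d-1$, with equality only when the intersection is a common facet; since $\{P_j\}$ decomposes $\overline{\sP_D}$ into cells with pairwise disjoint interiors, such a facet can be shared by at most two of the $P_j$, so intersecting further with a third distinct $P_{j_3}$ strictly lowers the dimension, giving $\dim Q'_\alpha\leq d-2$. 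If instead $Q'_\alpha=E_{j_1\gamma_1}\cap\cdots\cap E_{j_l\gamma_l}$ with $l\geq 2$ distinct facets, then the two distinct $(d-1)$-dimensional affine hulls $A(E_{j_1\gamma_1})\neq A(E_{j_2\gamma_2})$ meet in an affine subspace of dimension at most $d-2$.

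Next I would verify that $A(Q'_\alpha)$ is transversal to $\{z=0\}$, in the sense that it is not contained in any level set $\{z=c\}$. In both cases $Q'_\alpha$ lies in a facet of some $P_j$, and Lemma~\ref{l21} places this facet inside an affine hull $A(F_{j_i})$ with $F_{j_i}\in F(F_j)$, or $A(F_{u_\nu})$ with $F_{u_\nu}\in F(C_u)$. Lemma~\ref{l8} records that each such hyperplane is transversal to $\{z=0\}$, and transversality transfers to $A(Q'_\alpha)$. It follows that $\dim Q'_{\alpha\lambda}<\dim Q'_\alpha\leq d-2$ for every $\lambda$, i.e.\ $\dim Q'_{\alpha\lambda}\leq d-3$ uniformly in $\lambda\in\Q_{\geq 0}$.

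Finally I would invoke Lemma~\ref{la} of the appendix, which supplies a constant $C_\alpha$, depending only on $Q'_\alpha$ and in particular independent of $\lambda$ and $q$, such that $i(Q'_{\alpha\lambda},q)\leq C_\alpha q^{\dim Q'_{\alpha\lambda}}\leq C_\alpha q^{d-3}$. Setting $\tilde{c}_\alpha:=C_\alpha$ and $\tilde{c}'_\alpha(\lambda):=i(Q'_{\alpha\lambda},q)/q^{d-3}$ then delivers the stated identity with $|\tilde{c}'_\alpha(\lambda)|<\tilde{c}_\alpha$. I expect the main obstacle to be the triple-intersection case in the first step: verifying that no common facet $P_{j_1}\cap P_{j_2}$ can be fully contained in a third distinct cell $P_{j_3}$. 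This relies on the polyhedral character of the decomposition arising in Notations~\ref{n3} and can be made precise either by a careful interior argument through the definitions or by a direct convexity argument mirroring the proof of Lemma~\ref{l6}.
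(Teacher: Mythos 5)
Your overall strategy mirrors the paper's: show $\dim Q'_\alpha\leq d-2$, deduce that the $z$-slices have dimension at most $d-3$, then invoke Lemma~\ref{la} for the bound. However, there is a genuine gap in the middle step. You argue that because $Q'_\alpha$ lies in a facet $F$ of some $P_j$ whose affine hull $A(F)$ is transversal to $\{z=0\}$, ``transversality transfers to $A(Q'_\alpha)$.'' That implication is false: $A(Q'_\alpha)$ is a proper, possibly much lower-dimensional, affine subspace of $A(F)$, and nothing prevents it from lying entirely inside a single level set $\{z=\lambda_0\}$ even when $A(F)$ does not. If that happens with $\dim Q'_\alpha=d-2$, then $\dim Q'_{\alpha\lambda_0}=d-2$, not $\leq d-3$, and your chain of inequalities breaks at $\lambda=\lambda_0$.

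The paper avoids this. It does not claim transversality of $A(Q'_\alpha)$, but instead cites Lemma~\ref{l8}~(1): a polytope of dimension $\leq d-2$ lying in a facet of some $P_j$ can have a $(d-2)$-dimensional slice for at most \emph{one} value of $\lambda$ (precisely when the polytope itself sits inside a level set), and that exceptional value lands in the finite set $v(\sP_D)$. The conclusion $\dim Q'_{\alpha\lambda}\leq d-3$ is therefore drawn only for $\lambda\in S=\{m/p^n\}\setminus v(\sP_D)$, which is all that Lemma~\ref{ml1} actually needs. Your version asserts the slice bound for every $\lambda\in\Q_{\geq 0}$, which is both stronger than required and, in general, untrue. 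To repair the argument, replace the transversality-transfer sentence with an appeal to Lemma~\ref{l8}~(1) and restrict the conclusion to $\lambda\in S$. (As for the first step, the bound $\dim Q'_\alpha\leq d-2$ is asserted without proof in the paper; your sketch of it is reasonable, and your concern about the triple-intersection case is fair, but that is a smaller, separate issue and not the main gap.)
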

\begin{proof}Note that $\dim{Q'_{\alpha}} 
\leq d-2$, for $\alpha \in I_1$. Hence, by Lemma~\ref{l8}~(1), if 
$\dim{Q'_{\alpha\lambda}} = d-2$, for some $\lambda \in \R_{\geq 0}$ then 
$\lambda \in v(\sP_D)$. Therefore $\lambda \in S$ implies  
$\dim{Q'_{\alpha\lambda}} \leq d-3$.  
 Hence the proof follows by Lemma~\ref{la}.\end{proof}

\begin{defn}\label{d5}For a pair $(R,{\bf m})$, where $R$ is a standard 
graded ring  of dimension $d$, we define

\noindent{(1)}\quad a sequence of functions 
$g_n:[0, \infty)\longto \R$, given by 
$$g_n(\lambda)= g_n(\lfloor \lambda q\rfloor/q) = \frac{1}{q^{d-2}}
\left(\ell(R/\textbf{m}^{[q]})_{\lfloor \lambda q\rfloor}
- {\bar f_n}(\lambda)q^{d-1}\right),$$
where $f_{R, {\bf m}}$ is the \mbox{HK} density function for $(R,{\bf m})$
 (see Theorem~\ref{hkd}) and ${\bar f_n}(\lambda):= 
f_{R, {\bf m}}(\frac{{\lfloor \lambda q\rfloor}}{q})$.

\vspace{5pt}

\noindent{(2)}\quad
We also define the {\it $\beta$ density function} $g_{R, {\bf m}}:[0, 
\infty)\longto \R$, given by 
$$g_{R, {\bf m}}(\lambda) = \sum_{\{F\in F(P_j), 
F\subseteq \partial(C_D)\}_{j}}\frac{
\text{\mbox{rVol}}_{d-2}\big(F_\lambda\big)}{2}
- \sum_{\{F\in F(P_j), 
F\subseteq F'\in F(C_u), u\in L(P_D)\}_{j}}  \frac{
\text{\mbox{rVol}}_{d-2}\big(F_\lambda\big)}{2},$$
where $P_j$ and $v(\sP_D)$ are as in Notations~\ref{n3} and in 
 Notations~\ref{d4} and $F_\lambda = F\cap \{z = \lambda\}$.
Hence, by Lemma~\ref{l7}~(2), 
\begin{equation}\label{d*}g_{R, {\bf m}}(\lambda) = 
\mbox{rVol}_{d-2}\left(\partial({\sP_D})\cap \partial(C_D)\cap\{z=\lambda\}\right)
- \frac{\mbox{rVol}_{d-2}\left(\partial({\sP_D})\cap \{z=\lambda\}\right)}{2}.
\end{equation}
\end{defn}

\begin{rmk}\label{r4}By Lemma~\ref{rvol}, for a facet $F$ of $P_j$,
 there exists an
invertible affine transformation
$\varphi_{F}:A(F)\longto \R^{d-1}$ and $z_F\in \varphi(A(F))\cap \Z^{d-1}$
such that for $\lambda \in S = \{m/p^n\mid m, n\in \Z_{\geq 0}\}$, 
$$\mbox{rVol}_{d-2}(F\cap \{z = \lambda\}) = 
\mbox{Vol}_{d-2}(\varphi_{F}(F)\cap \{z_F = \lambda \}).$$ 
Note that 
$\cup_{j}\{v(F)\mid F\in F(P_j)\}\subseteq v(\sP_D)$.
Therefore, by Theorem~2.3 of [MT], 
the function $\psi_{F}:[0, \infty)\setminus v(\sP_D)\longto [0,\infty)$
 given by 
$\lambda \to \mbox{Vol}_{d-2}(\varphi_{F}(F)\cap \{z_{F} = \lambda \})$ 
is continuous.

Thus the  function $g_{R, {\bf m}}$ is  a compactly supported function and is 
continuous outside the finite set $v(\sP_D)$. 
Moreover, by Lemma~3.4 of [MT], 
$g_{R, {\bf m}}$ is a piecewise polynomial function.
\end{rmk}

\begin{lemma}\label{ml1}Let  $(R, {\bf m})$ denote the ring
associated to 
 a given toric pair $(X, \Delta)$ (as in Notations~\ref{n1}) of 
dimension $d-1\geq 1$. If $\lambda \in \R_{\geq 0}$ and $q= p^n\in\N$ are  
such that $\lambda_n := {\lfloor \lambda q\rfloor}/q \in S$ then  
 there exists a constant ${\tilde C_{\sP_D}}$ such that 
$$ g_n(\lambda) = g_{R, {\bf m}}(\lambda_n)
 + {\tilde c}(\lambda_n)/q,~~\mbox{where}~~ 
|{\tilde c}(\lambda_n)|\leq {\tilde C_{\sP_D}}.$$
\end{lemma}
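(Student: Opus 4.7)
The plan is to expand $q^{d-2} g_n(\lambda)$ by substituting the inclusion-exclusion formula~(\ref{***}) of Remark~\ref{r1} for $i(\sP_D, q, \lfloor\lambda q\rfloor)$, and then group the result by powers of $q$. Writing $m=\lfloor\lambda q\rfloor$, so that $\lambda_n=m/q\in S$, and using $\ell(R/\mathbf{m}^{[q]})_{m} = i(\sP_D,q,m)$, we have
\[
q^{d-2}g_n(\lambda) = \sum_j i(P_{j,\lambda_n},q) - \sum_{j<k} i((P_j\cap P_k)_{\lambda_n},q) - \sum_{j,\gamma} i(E_{j\gamma,\lambda_n},q) + \sum_{\alpha\in I_1}\epsilon_\alpha\, i(Q'_{\alpha,\lambda_n},q) - f_{R,\mathbf{m}}(\lambda_n)q^{d-1}.
\]
By Lemma~\ref{lr1}(1), $\sum_j C_{d-1}(P_{j,\lambda_n},q)=f_{R,\mathbf{m}}(\lambda_n)$, and by Lemmas~\ref{lr2}--\ref{lr3} and Lemma~\ref{la} every other summand has degree $\leq d-2$ in $q$. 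Hence the $f_{R,\mathbf{m}}(\lambda_n)q^{d-1}$ subtraction cancels the leading term of $\sum_j i(P_{j,\lambda_n},q)$ exactly.

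Next I would read off the $q^{d-2}$-coefficient. By Lemma~\ref{lr1}(2) this coefficient receives $\frac{1}{2}\sum_j\sum_{\tilde F\in F(P_j)}\text{rVol}_{d-2}(\tilde F_{\lambda_n})$ from $\sum_j i(P_j,\cdot)$, while Lemma~\ref{lr2}(1)--(2) contributes $-\sum_{j<k}\text{rVol}_{d-2}((P_j\cap P_k)_{\lambda_n}) - \sum_{j,\gamma}\text{rVol}_{d-2}(E_{j\gamma,\lambda_n})$. The combinatorial heart of the proof is to invoke Lemmas~\ref{l21} and \ref{l6} to classify each facet $\tilde F$ of each $P_j$ as one of: (i)~a subfacet of the cone $F_j$ lying in $\partial(C_D)$, (ii)~an internal wall of $F_j$ shared with a unique adjacent cone $F_k$ (which by Lemma~\ref{l6} equals the common facet $P_j\cap P_k$), or (iii)~an $E_{j\gamma}$ contained in some $F'\in F(C_u)$ with $u\in L(P_D)$. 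Type~(ii) facets appear twice in $\sum_j$ with weight $\tfrac{1}{2}$ and once in the subtracted $(P_j\cap P_k)$-sum, yielding net weight $0$; type~(iii) facets contribute $\tfrac{1}{2}-1=-\tfrac{1}{2}$; and type~(i) facets contribute $\tfrac{1}{2}$. Comparing with Definition~\ref{d5}(2), the surviving $q^{d-2}$-coefficient is exactly $g_{R,\mathbf{m}}(\lambda_n)$.

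It remains to control the lower-order terms uniformly in $\lambda_n$. For each $P_j$, Theorem~\ref{tcb} gives $|C_i(P_{j,\lambda_n},q)|\leq \tilde c_i(P_j)$ for every $i\leq d-3$; Lemma~\ref{lr2}(2) supplies analogous bounds for the pieces $P_j\cap P_k$ and $E_{j\gamma}$; and Lemma~\ref{lr3} together with Lemma~\ref{la} bounds the $Q'_\alpha$-contribution by $\tilde c_\alpha q^{d-3}$. All these constants depend only on the finite collection of polytopes attached to $(X,D)$ and not on $\lambda_n$, so summing them produces a total error of the form $\tilde c(\lambda_n)q^{d-3}$ with $|\tilde c(\lambda_n)|\leq \tilde C_{\sP_D}$. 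Dividing by $q^{d-2}$ yields the claimed identity $g_n(\lambda)=g_{R,\mathbf{m}}(\lambda_n)+\tilde c(\lambda_n)/q$. The main obstacle is the bookkeeping in the middle step: one must verify that the $\tfrac{1}{2}$ appearing in Lemma~\ref{lr1}(2) is precisely what is needed to cancel shared walls (via Lemma~\ref{l6}) and to leave behind the signed boundary contribution defining $g_{R,\mathbf{m}}$, and that the hypothesis $\lambda_n\in S$ (avoidance of $v(\sP_D)$) is exactly what legitimises the use of Lemmas~\ref{lr1}--\ref{lr3} uniformly along the $\{z=\lambda_n\}$-slice.
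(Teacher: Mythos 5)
Your proposal is correct and takes essentially the same route as the paper: expand $i((\mathcal{P}_D)_{\lambda_n},q)$ via the inclusion-exclusion formula of Remark~\ref{r1}, use Lemma~\ref{lr1}(1) to cancel the $q^{d-1}$ term against $f_{R,\mathbf{m}}(\lambda_n)q^{d-1}$, classify facets of the $P_j$ into the three types (shared internal walls, boundary-of-$C_D$, and $E_{j\gamma}$) to show the $q^{d-2}$ coefficient equals $g_{R,\mathbf{m}}(\lambda_n)$ as defined in Definition~\ref{d5}(2), and bound the remaining $q^{d-3}$ contributions uniformly via Lemmas~\ref{lr2}, \ref{lr3} and Theorem~\ref{tcb}. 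The facet bookkeeping with the weights $1-1=0$, $\tfrac12$, and $\tfrac12-1=-\tfrac12$ is exactly what the paper's proof carries out.
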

\begin{proof} For a polytope $P$, we 
have $i(P, q,  \lambda_n q) = i(P_{\lambda_n}, q)$ and  
$\ell(R/\textbf{m}^{[q]})_{\lambda_n q}
= i((\mathcal{P}_D)_{\lambda_n}, q)$, where, by Remark~\ref{r1}~(\ref{***}), we have
$$i((\mathcal{P}_D)_{\lambda_n}, q) = 
\sum_ji((P_j)_{\lambda_n}, q)-\sum_{j< k}i((P_j\cap P_k)_{\lambda_n}, q)
- \sum_{j,\gamma}i((E_{j\gamma})_{\lambda_n}, q) +
\sum_{\alpha\in I_1}\epsilon_\alpha i((Q'_{\alpha})_{\lambda_n}, q).$$

Now, by Lemma~\ref{lr1}, for $1\leq j\leq s$, we have 
 $$i((P_j)_{\lambda_n}, q)  
- \mbox{rVol}_{d-1}((P_j)_{\lambda_n})q^{d-1} =  \frac{1}{2}\sum_{\{F\in F(P_j)\}}
\mbox{rVol}_{d-2}{F_{\lambda_n}}q^{d-2}+ {\tilde c_j}(\lambda_n)q^{d-3}$$
$$= 
\left[\sum_{\{F\mid F=P_i\cap P_j\}_i}
\frac{\mbox{rVol}_{d-2}{F_{\lambda_n}}}{2} + 
\sum_{\{F\mid F\subseteq \partial(C_D)\}}
\frac{\mbox{rVol}_{d-2}F_{\lambda_n}}{2} + \sum_{\{\nu\}}
\frac{\mbox{rVol}_{d-2}(E_{j\nu})_{\lambda_n}}{2}\right]q^{d-2}+
{\tilde c_j}(\lambda_n)q^{d-3},$$
where $F\in F(P_j)$ and  $|{\tilde c_j}(\lambda_n)|\leq {\tilde c_j}$, 
for some constant $c_j$ independent of $\lambda_n$.

Hence, by Lemma~\ref{lr1}, 
$$\sum_ji((P_j)_{\lambda_n}, q) - 
f_{R,{\bf m}}(\lambda_n)q^{d-1} =$$
$$\left[ \sum_{\{F= P_i\cap P_j\mid {i<j}\}_j}
\mbox{rVol}_{d-2}{F_{\lambda_n}} + 
\sum_{\{F\mid F\subseteq \partial(C_D)\}_j}
\frac{\mbox{rVol}_{d-2}{F_{\lambda_n}}}{2} + \sum_{\{j, \nu\}}
\frac{\mbox{rVol}_{d-2}(E_{j\nu})_{\lambda_n}}{2}\right]q^{d-2}+ 
{\tilde c_j}(\lambda_n)q^{d-3}$$
and by Lemma~\ref{lr2},
$$\sum_{i<j}i((P_i\cap P_j)_{\lambda_n}, q) =  
\sum_{\{F= P_i\cap P_j\}_{i<j}}
\mbox{rVol}_{d-2}{F_{\lambda_n}} q^{d-2} + {\tilde c_{ij}}(\lambda_n)q^{d-3},~~
\mbox{where}~~|{\tilde c_{ij}}(\lambda_n)|\leq {\tilde c_{ij}}.$$

Now, by Lemmas~\ref{lr2} and \ref{lr3}, we have 
$$\frac{1}{q^{d-2}}\ell(\frac{R}{{\bf m}^{[q]}})_{\lambda_n q}-
f_{R, {\bf m}}(\lambda_n)q = \sum_{\{F\in F(P_j),~ F\subseteq \partial(C_D)\}_j}
\frac{\mbox{rVol}_{d-2}{F_{\lambda_n}}}{2} - \sum_{\{j, \nu\}}
\frac{\mbox{rVol}_{d-2}(E_{j\nu})_{\lambda_n}}{2}+
\frac{{\tilde c}(\lambda_n)}{q},$$
where $|{\tilde c}(\lambda_n)|\leq {\tilde C_{\sP_D}}$.
 Hence 
$g_n(\lambda) = g_{R, {\bf m}}(\lambda_n)
 + {\tilde c}(\lambda_n)/q$, where $|{\tilde c}(\lambda_n)|\leq 
{\tilde C_{\sP_D}}$.
This implies the lemma. 
\end{proof}

\begin{rmk}By construction it follows that 
$\mbox{Support}(g_{R, {\bf m}})\cup_n\mbox{Support}(g_n)
 \subseteq \pi({\bar{\sP_D)}}$, which is a compact set and 
where $\pi:\R^d\to \R$ is the projection map as in Notations~\ref{d4}~(2).
\end{rmk}

\begin{lemma}\label{l42}The function $g_{R, {\bf m}}$ is a 
compactly supported funtion and is continuous on 
$[0, \infty)\setminus v(\sP_D)$.
 \begin{enumerate}
\item For any given compact set 
$V\subseteq [0, \infty)\setminus v(\sP_D)$, 
the sequence $g_n|_V$ converges uniformly to  $g_{R, {\bf m}}|_V$.
\item $\int_0^\infty g_n(\lambda)d\lambda = \int_0^\infty 
g_{R, {\bf m}}(\lambda)d\lambda +O(1/q)$.\end{enumerate}
\end{lemma}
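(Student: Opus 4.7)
The compact-support and continuity claims I would dispatch first and directly from Remark~\ref{r4}: the definition of $g_{R,\mathbf{m}}$ in Definition~\ref{d5} expresses it as a finite signed combination of slicing functions $\lambda \mapsto \mathrm{rVol}_{d-2}(F \cap \{z=\lambda\})$ for facets $F$ of the $P_j$, each of which Remark~\ref{r4} shows is continuous on $[0,\infty)\setminus v(\sP_D)$, piecewise polynomial there, and supported in the compact set $\pi(\overline{\sP_D})$.

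For part (1), I would fix a compact $V \subseteq [0,\infty)\setminus v(\sP_D)$ and set $\delta_0 = \mathrm{dist}(V, v(\sP_D)) > 0$, which is positive since $v(\sP_D)$ is finite. For $q = p^n$ with $1/q < \delta_0/2$, every $\lambda \in V$ satisfies $|\lambda - \lambda_n| < 1/q < \delta_0/2$, forcing $\lambda_n = \lfloor \lambda q\rfloor/q \notin v(\sP_D)$ and hence $\lambda_n \in S$. Lemma~\ref{ml1} then yields $|g_n(\lambda) - g_{R,\mathbf{m}}(\lambda_n)| \leq \tilde C_{\sP_D}/q$ uniformly in $\lambda \in V$. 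To replace $g_{R,\mathbf{m}}(\lambda_n)$ by $g_{R,\mathbf{m}}(\lambda)$, I would use the piecewise polynomial description of $g_{R,\mathbf{m}}$ from Remark~\ref{r4}: on each connected component of a (slightly shrunken) neighborhood of $V$ that avoids $v(\sP_D)$, the function is Lipschitz with some constant $L_V$, and for $q$ large both $\lambda$ and $\lambda_n$ lie in the same component, giving $|g_{R,\mathbf{m}}(\lambda_n) - g_{R,\mathbf{m}}(\lambda)| \leq L_V/q$. Combining, $|g_n - g_{R,\mathbf{m}}| = O(1/q)$ uniformly on $V$.

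For part (2), I would split the integration across a shrinking neighborhood of $v(\sP_D)$. Both $g_n$ and $g_{R,\mathbf{m}}$ are supported in the compact set $K = \pi(\overline{\sP_D})$. The function $g_{R,\mathbf{m}}$ is bounded (piecewise polynomial on compact support), and for $g_n$ a \emph{uniform} bound $M$ follows from Theorem~\ref{tcb} applied term-by-term to the decomposition (\ref{***}), exactly as in the proof of Lemma~\ref{ml1} but now also valid at $\lambda$ with $\lambda_n \in v(\sP_D)$, since Theorem~\ref{tcb} bounds all lower Ehrhart coefficients uniformly. Writing $V_q = K \setminus \bigcup_{v \in v(\sP_D)}(v - 1/q, v + 1/q)$, I would apply part (1) on $V_q$ to obtain a contribution $\mathrm{meas}(K) \cdot O(1/q) = O(1/q)$, while on $K \setminus V_q$ the trivial bound $|g_n - g_{R,\mathbf{m}}| \leq 2M$ combined with $\mathrm{meas}(K \setminus V_q) \leq 2|v(\sP_D)|/q$ again gives $O(1/q)$.

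The main obstacle is the boundedness of $g_n$ uniformly in $n$ \emph{and} in $\lambda$, including the points $\lambda$ where $\lambda_n \in v(\sP_D)$ and Lemma~\ref{ml1} does not directly apply. This is exactly where one needs the full strength of the refined Ehrhart estimate Theorem~\ref{tcb}: the decomposition (\ref{***}) of $\ell(R/\mathbf{m}^{[q]})_{\lfloor\lambda q\rfloor}$ into Ehrhart counts of the polytopes $P_j$, $P_j \cap P_k$, $E_{j\gamma}$, and $Q'_\alpha$ shows that the $q^{d-2}$-coefficient is a bounded combination of the $C_{d-2}$-Ehrhart coefficients of these, with the $q^{d-1}$-contribution exactly cancelled by $\bar f_n(\lambda) q^{d-1}$ thanks to Lemma~\ref{lr1}(1); the residual $q^{d-3}$ and lower terms are absorbed into the $O(1/q)$ via the uniform bounds of Theorem~\ref{tcb}, Lemma~\ref{lr2}, and Lemma~\ref{lr3}.
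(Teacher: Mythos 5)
Your proof follows essentially the same route as the paper's: compact support and continuity come from Remark~\ref{r4}, uniform convergence on a compact $V$ avoiding $v(\sP_D)$ combines Lemma~\ref{ml1} with the piecewise-polynomial (hence locally Lipschitz) structure of $g_{R,{\bf m}}$, and the integral estimate is proved by splitting $[0,\pi(\overline{\sP_D})]$ into a set $V_q$ where the pointwise $O(1/q)$ bound holds and a complement of measure $O(1/q)$ where one uses a uniform sup-bound on $|g_n - g_{R,{\bf m}}|$.

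One thing worth flagging as a genuine value-add in your writeup: the paper simply asserts that ``all the functions $g_n$ and $g$ are bounded with support in $[0,\pi(\overline{\sP_D})]$'' and uses this to control $\int_{[0,\infty)\setminus V_1}|g_n-g|$, but for the $O(1/q)$ conclusion one needs the bound on $g_n$ to be \emph{uniform in $n$}, which is not justified in the text. You correctly identify this as the crux and supply the missing step: run the decomposition (\ref{***}) through Theorem~\ref{tcb} to get coefficient bounds independent of $n$ and $\lambda$, noting that the $q^{d-1}$ part cancels against $\tilde f_n(\lambda)q^{d-1}$ because the leading Ehrhart coefficient is the relative volume (so Lemma~\ref{lr1}(1) extends to $\lambda_n\in v(\sP_D)$, where $\dim P_{j\lambda_n}<d-1$ forces both sides to vanish or agree). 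Minor caution: you write ``apply part (1) on $V_q$,'' but since $V_q$ varies with $q$ you cannot literally quote part (1); what you actually need, and what the paper does with ``arguing as above,'' is that the constant in the part-(1) estimate is controlled by $\max_i C_i + \tilde C_{\sP_D}$, which is a fixed quantity because there are finitely many polynomial pieces on $[0,\pi(\overline{\sP_D})]$. With that phrasing tightened, your argument is complete.
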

\begin{proof} By Remark~\ref{r4}, the function $g_{R, {\bf m}}$ is a 
compactly supported funtion and is continuous outside $v(\sP_D)$.

\vspace{5pt}
\noindent(1)\quad Let us fix  
 a compact set $V\subseteq [0, \infty)\setminus 
v(\sP_D)$. 
We can assume $V\subseteq 
[0, \pi({\bar \sP_D})]\setminus v(P_D)$.  Now, let  
$[0, \pi({\bar\sP_D})]\setminus v(\sP_D) = 
\cup_{i=1}^m (b_i,b_{i+1})$, where 
$f_{R,{\bf m}}|_{(b_i, b_{i+1})} = P_i$, for some polynomial function $P_i$.
We choose $q_0 = p^{n_0}$ such that 
$$V\subseteq \cup_{i=1}^m[b_i+2/q_0, b_{i+1}-2/q_0] \subseteq
\cup_{i=1}^m[b_i, b_{i+1}].$$ 
Now,  $q\geq q_0$ and $\lambda \in V\cap (b_i, b_{i+1})$ imply  $\lambda_n\in 
[b_i+1/q_0, b_{i+1}-1/q_0]$. Hence 
$$|g_{R, {\bf m}}(\lambda)-g_{R, {\bf m}}(\lambda_n)| = 
|P_i(\lambda)-P_i(\lambda_n)| \leq C_i/q,$$ 
where $C_i$ is a constant determined by $P_i$ and is independent of $q$.
By Lemma~\ref{ml1}, we have $|g_n(\lambda)-g_{R, {\bf m}}(\lambda)|\leq 
C_i/q + {\tilde C_{\sP_D}}/q$, for all $\lambda \in V\cap (b_i, b_{i+1})$ and 
$q\geq q_0$. Hence $|g_n(\lambda)-g_{R, {\bf m}}(\lambda)|\leq c_0/q$,
for alll $\lambda \in V$ and $q\geq q_0$.
This proves part~(1) of the lemma.

\vspace{5pt}
\noindent~(2)\quad Fix  $q_0 = p^{n_0}$ such that 
$\cup_{i=1}^m[b_i+2/q_0, b_{i+1}-2/q_0] \subseteq
\cup_{i=1}^m[b_i, b_{i+1}]$. 
For $q=p^n$, if $q\geq q_0$ then 
$V_1:=\cup_{i=1}^m[b_i+2/q, b_{i+1}-2/q] \subseteq
\cup_{i=1}^m[b_i, b_{i+1}]$. 
Now, arguing as above one can deduce that
there is a constant $c_0$ such that
$|g_n(\lambda)-g_{R, {\bf m}}(\lambda)|\leq c_0/q$,
for alll $\lambda \in V_1$.
Moreover $\mu([0, \pi({\bar \sP_D})\setminus V_1)\leq 4m/q$. Note that all 
the functions $g_n$ and $g$ are
bounded with support in $[0, \pi{\bar \sP_D}]$. Hence 
 $$\int_0^\infty |g_n(\lambda) - g(\lambda)|
d\lambda\leq \int_{V_1} |g_n(\lambda) -g(\lambda)|d\lambda +
\int_{[0, \infty)\setminus V_1}|g_n(\lambda) -
g(\lambda)|d\lambda = O(1/q).$$

The same assertion follows for $q<q_0$, by the boundedness of $g$ and $g_n$.
This proves the part~(2) of the lemma and hence the lemma.
 \end{proof}

\begin{lemma}\label{ml} Let $f:[0, \infty)\to [0, \infty)$ be a 
continuous compactly 
supported piecewise polynomial function. For $q= p^n$, let
 ${\tilde f}_n:[0, \infty)\longto [0, \infty)$ be the function 
given by 
${\tilde f}_n(x) = f({\lfloor qx\rfloor}/{q})$.
Then, for all $q=p^n$, we have 
$$\int_0^\infty\tilde{f}_n(x)\ dx= 
\int_0^\infty f(x)\ dx\ +\ O(1/q^2).$$
\end{lemma}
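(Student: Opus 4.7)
The plan is to expand
\[
\int_0^\infty[f(x) - \tilde f_n(x)]\,dx = \sum_{k\ge 0}\int_{k/q}^{(k+1)/q}[f(x) - f(k/q)]\,dx
\]
and estimate each summand by Taylor expansion inside the polynomial pieces of $f$. Let $0 = b_0 < b_1 < \cdots < b_m$ be the breakpoints, with $f = P_i$ polynomial on $[b_i,b_{i+1}]$ and $f\equiv 0$ on $[b_m,\infty)$; continuity forces $P_{i-1}(b_i)=P_i(b_i)$ and $P_{m-1}(b_m)=0$, and for the intended application $f = f_{R, {\bf m}}$ one also has $f(0)=0$ since $f_n(0) = q^{-(d-1)} \to 0$. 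For $q$ large enough that $1/q$ is smaller than every gap $b_{i+1}-b_i$, I would split the indices $k\ge 0$ into (i) interior indices, for which $[k/q,(k+1)/q] \subseteq (b_i,b_{i+1})$ for some $i$, and (ii) at most $m$ straddling indices, each producing an interval that contains one breakpoint $b_i$.

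On piece $i$, Taylor expanding $P_i$ around $k/q$ and integrating gives
\[
\int_{k/q}^{(k+1)/q}[P_i(x) - P_i(k/q)]\,dx = \frac{P_i'(k/q)}{2q^2} + \sum_{j\ge 2}\frac{P_i^{(j)}(k/q)}{(j+1)!\,q^{j+1}},
\]
whose tail is $O(1/q^3)$ per term and thus sums to $O(1/q^2)$ over the $O(q)$ interior indices in that piece. For the leading $j=1$ contribution, $(2q^2)^{-1}\sum_{k}P_i'(k/q)$ is $(2q)^{-1}$ times a left Riemann sum for $P_i'$; the standard $O(1/q)$ Riemann-sum error applied to the smooth $P_i'$ yields
\[
\frac{1}{2q^2}\sum_{k \text{ interior in piece } i} P_i'(k/q) = \frac{1}{2q}\bigl[P_i(b_{i+1}) - P_i(b_i)\bigr] + O(1/q^2),
\]
where the $O(1/q)$ mismatch between the endpoints of the interior-index range and the true $b_i,b_{i+1}$ affects $P_i$ by $O(1/q)$ and therefore, after the prefactor $1/(2q)$, only by $O(1/q^2)$.

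Summing these leading terms over $i=0,\dots,m-1$ produces
\[
\frac{1}{2q}\sum_{i=0}^{m-1}\bigl[P_i(b_{i+1}) - P_i(b_i)\bigr] = \frac{1}{2q}\bigl[f(b_m) - f(0)\bigr] = 0
\]
by continuity of $f$ combined with $f(0) = f(b_m) = 0$; this telescoping collapse is the decisive cancellation that upgrades the error from $O(1/q)$ to $O(1/q^2)$. The at most $m$ straddling intervals each contribute at most $L/q^2$ by the Lipschitz bound on $f$ (with $L$ coming from the finitely many polynomial pieces on a compact set), so their total is $O(1/q^2)$. Combining, $\int_0^\infty[f(x) - \tilde f_n(x)]\,dx = O(1/q^2)$. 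The main delicacy I anticipate is the endpoint bookkeeping and the telescoping identity: without $f(0) = f(b_m) = 0$ the leading correction would persist as $O(1/q)$, so the boundary vanishing of the HK density function is what makes the second-order cancellation possible.
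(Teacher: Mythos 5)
Your proof is correct and reaches the same decisive cancellation as the paper: the leading $O(1/q)$ contribution telescopes to $\bigl[f(b_m)-f(0)\bigr]/2q = 0$. The two routes differ in bookkeeping rather than in substance. The paper's proof isolates a claim that $\int_0^A\bigl[P(x) - P(\lfloor qx\rfloor/q)\bigr]\,dx = P(A)/2q + O(1/q^2)$, established by exact computation for monomials $x^l$, and then reassembles the pieces via $\int_{b_i}^{b_{i+1}} = \int_0^{b_{i+1}} - \int_0^{b_i}$; you instead estimate each subinterval $[k/q,(k+1)/q]$ by Taylor expansion of the relevant polynomial piece, single out the $j=1$ term whose Riemann sum collapses telescopically, and control the boundary intervals straddling a breakpoint by a Lipschitz bound. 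Both arguments are valid, and your interval-by-interval treatment is slightly more careful: the paper implicitly identifies $\tilde f_n$ with $P_i(\lfloor qx\rfloor/q)$ throughout $[b_i,b_{i+1}]$, which fails on the sub-interval of length less than $1/q$ immediately to the right of $b_i$ when $b_i\notin\frac{1}{q}\Z$ (continuity of $f$ at $b_i$ makes this only an $O(1/q^2)$ error, but the paper does not note it). A minor slip in the paper's claim, incidentally, is that the displayed identity should read $\bigl(P(A)-P(0)\bigr)/2q + O(1/q^2)$ rather than $P(A)/2q + O(1/q^2)$ in order to hold for polynomials with nonzero constant term; the subsequent telescoping cancels the $P_i(0)$ contributions, so the main argument is unaffected.

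The point you flag about $f(0)=0$ is genuine and worth emphasizing. As stated, the lemma's hypotheses do not force $f(0)=0$, and without it the conclusion is false: for $f(x)=\max(0,1-x)$ one computes $\int_0^\infty \tilde f_n - \int_0^\infty f = 1/(2q)$, an honest $O(1/q)$ and not $O(1/q^2)$. The paper's proof silently writes $f(b_0)=f(0)=0$ when choosing the breakpoints, which holds in the one place the lemma is applied, namely $f=f_{R,{\bf m}}$, since $f_n(0)=q^{1-d}\to 0$, but it is an extra hypothesis that should appear in the statement. You correctly identify this boundary vanishing as the mechanism that upgrades the error from $O(1/q)$ to $O(1/q^2)$.
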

\begin{proof} We assume the following claim for the moment.

\vspace{5pt}
\noindent{\bf Claim}\quad If $P(x)\in \R[x]$ is a polynomial 
function and  ${\tilde P}_n(x):= P({\lfloor qx\rfloor}/{q})$ then 
$$\int_0^A[P(x)-{\tilde P_n}(x)]dx = P(A)/2q+ O(1/q^2).$$

Now, since $f$ is a compactly supported piecewise polynomial continuous 
function, 
 there exist 
$0 = b_0 < b_1< \ldots< b_{\nu+1}$ and polynomials $P_0(x), \ldots, 
P_{\nu}(x) 
\in \R[x]$ such that 
  $f\mid_{[b_i,b_{i+1}]} = P_{i}(x)$ 
and $\mbox{Support}(f) \subseteq [b_0, b_{\nu+1}]$ and $f(b_0) = f(b_{\nu+1}) =0$.

Now $$\int_0^\infty f(x)dx -  
\int_0^\infty \tilde{f}_n(x)dx = \int_0^\infty \left(f(x) -  
\tilde{f}_n(x)\right)dx = \sum_{i=0}^\nu\int_{b_i}^{b_{i+1}}
(P_i(x)-({\tilde P_i})_n(x))dx $$
$$ = \sum_{i=0}^\nu\left[\int_{0}^{b_{i+1}}
(P_i(x)-({\tilde P_i})_n(x))dx -\int_{0}^{b_{i}}
(P_i(x)-({\tilde P_i})_n(x))dx\right], $$ 
where $({\tilde P_i})_n(x) = P_i({\lfloor qx\rfloor}/{q})$.
Therefore, by the above claim
$$\int_0^\infty [f(x) - \tilde{f}_n(x)]dx = 
\sum_{i=0}^\nu\left[\frac{P_i(b_{i+1})}{2q}- \frac{P_i(b_{i})}{2q}
\right]+O(\frac{1}{q^2})=
\sum_{i=0}^\nu\left[\frac{f(b_{i+1})}{2q}- \frac{f(b_{i})}{2q}
\right]+O(\frac{1}{q^2}) = O(\frac{1}{q^2}).$$

\vspace{5pt}

\noindent{\underline{Proof of the claim}}:\quad We can assume without loss of 
generality that $P(x) = x^l$, for some $l\geq 1$. Note that there is 
$q_0 = p^{n_0}$ such that, for any $q\geq q_0$, there is $l_0$ such that
$l_0/q\leq A < (l_0+1)/q$. Then
$$\int_0^A(P(x)-{\tilde P}_n(x))dx  = 
\int_0^{l_0/q}(P(x)-{\tilde P}_n(x))dx + O(1/q^2),$$
 as  the inequalities
$(A-l_0/q) \leq 1/q$ and $l\geq 1$ implies 
$|\int_{l_0/q}^A(P(x)-{\tilde P}_n(x))dx| = O(1/q^2)$.
It is also obvious that 
$P(A)/2q+ O(1/q^2) = P(l_0/q)/2q+ O(1/q^2)$.

Now, if $m\in \Z_{\geq 0}$ then 
$$\int_{\frac{m}{q}}^{\frac{m+1}{q}}(P(x)-{\tilde P}_n(x))dx = 
\int_{\frac{m}{q}}^{\frac{m+1}{q}}(x^l- (m/q)^l)dx = \int_{
\frac{m}{q}}^{\frac{m+1}{q}}
\left[\sum_{i=0}^{l-1}{ l\choose i }(m/q)^{i}(x-m/q)^{l-i}\right] dx$$
$$= \int_{0}^{1/q}
\left[\sum_{i=0}^{l-1}{ l\choose i }(m/q)^{i}(t)^{l-i}\right] dt 
= \sum_{i=0}^{l-1}{ l\choose i }(m/q)^{i}\frac{l-i+1}{q^{l-i+1}}.$$
Now $\sum_{m=0}^{qA}m^i/q^{l+1} = A^{i+1}/q^{l-i}(i+1)$. 
This implies 
$$\int_0^A(P(x)-{\tilde P}_n(x))dx = 
\sum_{i=0}^{qA}\int_{m/q}^{m+1/q}(P(x)-{\tilde P}_n(x))dx = \sum_{i=0}^{l-1}
{l\choose i}\frac{1}{(l-i+1)(i+1)}\frac{A^{i+1}}{q^{l-i}}$$
$$=\frac{A^l}{2q}+\cdots +\frac{A^2}{2q^{l-1}}+ \frac{A}{(l+1)q^{l}} 
= P(A)/2q + O(1/q^2).$$
This proves the claim and hence the lemma.
\end{proof}

\noindent{\underline{Proof of the Main Theorem}}: Now the proof follows by 
putting together Definition~\ref{d5}~(\ref{d*}), Lemma~\ref{l42} and 
taking $f = f_{R, {\bf m}}$ in Lemma~\ref{ml}. $\hfill \Box$

\vspace{5pt}

\noindent{\underline{Proof of Corollary}~\ref{thebeta}:\quad
Let $g_{R, {\bf m}}$ and $g_n$ denote the function as given in 
 Definition~\ref{d5}. 

 The formula for the integral of $g_{R, {\bf m}}$ follows from
the definition of $g_{R, {\bf m}}$ and Fubini's theorem.

For $q=p^n$ and 
for the function ${\tilde f_n}$ given by 
${\tilde f_n}(x) = f_{R, {\bf m}}({\lfloor qx\rfloor}/{q})$, we have 
\begin{eqnarray*}
\int_0^\infty g_n(\lambda)\ d(\lambda)
&=&\frac{1}{q^{d-2}}
\int_0^\infty\left(\ell\big((R/\textbf{m}^{[q]})_{\lfloor q\lambda
\rfloor}\big)-{\tilde f_n}(\lambda)q^{d-1}\right)d\lambda\\
&=& \frac{1}{q^{d-2}}
\left(\sum_{m=0}^{\infty}\int_{\frac{m}{q}}^{\frac{m+1}{q}}
\ell\big((R/\textbf{m}^{[q]})_{\lfloor q\lambda\rfloor}\big)
-q^{d-1}\int\tilde{f}_n(\lambda)d\lambda \right)\\
&=& \frac{1}{q^{d-2}}
\left(\sum_{m=0}^{\infty}\frac{1}{q}\ell\big((R/\textbf{m}^{[q]})_{m}\big)
-q^{d-1}\int\tilde{f}_n(\lambda)d\lambda\right).
\end{eqnarray*}
By Lemma~\ref{ml},
$$\int_0^\infty g_n(\lambda)\ d(\lambda) = 
\frac{\ell(R/{\bf m}^{[q]})}{q^{d-1}}
-q\int_0^\infty f_{R, {\bf m}}(\lambda)d\lambda + O(1/q).$$
hence by Theorem~1.1 of [T2] and
part~(2) of Lemma~\ref{l42},
$$\int_0^\infty g_{R, {\bf m}}(\lambda)d\lambda +O(1/q) = 
\frac{\ell(R/{\bf m}^{[q]})}{q^{d-1}}
-(q)e_{HK}(R, {\bf m}) + O(1/q)$$
which implies 
$$\ell(R/{\bf m}^{[q]}) = 
e_{HK}(R, {\bf m})q^d + q^{d-1} \int_0^\infty g_{R, {\bf m}}(\lambda)d\lambda
+ O(q^{d-2}).$$
This gives the corollary.$\hfill\Box$

\begin{rmk}\label{mr}
Since $\partial\sP_D$ and $\partial\sP_D\cap \partial C_D$ consist of rational
$d-1$ dimensional convex polytopes, the number $\beta(R, {\bf m})$ 
(the volume of the integral) is a 
rational number and also is independent of the characteristic.
We note that the above arguement gives a direct proof 
of the result of [HMM] in our particular situation, for the graded ring $R$.
However the assertion that 
$\beta(R, {\bf m})$ is a constant and rational has been proved earlier by Bruns-Gubeladze
in [BG], for 
any  normal affine monoid.

It is an interesting problem to extend the computations here to the case of 
$R$-modules, and to determine the homomorphism $Cl(R)\longto \R$ of [HMM] in
this toric case.\end{rmk}

\section{some properties and examples}
\begin{defn}Let $R$ be a Noetherian standard graded ring of dimension $d\geq 2$ 
with the maximal homogeneous ideal  ${\bf m}$. Let 
$\ell(R_n) = \frac{e_0(R, {\bf m})}{(d-1)!}n^{d-1} + 
\tilde{e}_1(R, {\bf m})n^{d-2}+ \cdots + \tilde{e}_{d-1}(R, {\bf m})$
be the Hilbert polynomial  of $(R,{\bf m})$. 
We define  the  Hilbert  density function $F_R:[0,\infty)
\longto [0, \infty)$, of $R$  as
 $$F_R(\lambda)= \frac{e_0(R, {\bf m})}{(d-1)!}\lambda^{d-1} = 
 \lim_{n\to\infty}F_n(\lambda) 
:=\frac{1}{q^{d-1}}\ell(R_{\lfloor q\lambda\rfloor}).$$
Similarly we can define the  second Hilbert 
 density function 
 $G_R:[0,\infty)\longto \R$ as
 $$G_R(\lambda)= \tilde{e}_1(R, {\bf m})\lambda^{d-2} = 
\lim_{n\to\infty}G_n(\lambda):=\frac{1}{q^{d-2}}
\left(\ell(R_{\lfloor q\lambda\rfloor})-
 F_R\big(\frac{\lfloor q\lambda\rfloor}{q}\big)\right).$$
\end{defn}

\begin{rmk}\label{r5}
Let $R$ and $S$ be two Noetherian standard graded rings over an 
algebraically closed field $K$ of dimension $d\geq 2$ and $d'\geq 2$
with homogenous maximal ideals ${\bf m}$ and ${\bf n}$, respectively. 
Then, using the Kunneth formula for sheaf cohomology, it is easy to see
$$\tilde{e}_1(R\#S, {\bf m}\#{\bf n})=
\frac{e_0(R, {\bf m})}{(d-1)!}\tilde{e}_1(R, {\bf m})
+\frac{e_0(S, {\bf n})}{(d'-1)!}\tilde{e}_1(S, {\bf n}).$$
Hence we have
$G_{R\#S}=G_RF_S+G_SF_R.$
\end{rmk}

\begin{propose}\label{p1}
Let $(R, {\bf m})$ and $(S, {\bf n})$ be two Noetherian standard 
graded rings over an
algebraically closed field $K$ (of characteristic $p>0$) 
of dimension $d\geq 2$ and $d'\geq 2$, associated to the 
toric pairs $(X, D)$ and $(Y, D')$, resply. Then we have, 
$$G_{R\#S}-g_{R\#S, {\bf m}\#{\bf n}}=(G_R-g_{R, {\bf m}})(F_S-f_{S,{\bf n}} )
+(G_S-g_{S, {\bf n}})(F_R-f_{R, {\bf m}}).$$
Here the functions $g_{R, {\bf m}}$ and $g_{S, {\bf n}}$ denote the 
$\beta$-density functions for the pairs $(R, {\bf m})$ and $(S, {\bf n})$
respectively. The function $g_{R\#S, {\bf m}\#{\bf n}}$ denotes the 
$\beta$-density function for the pair $(R\#S, {\bf m}\#{\bf n})$.
\end{propose}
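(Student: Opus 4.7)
The plan is to express both $f_{R\#S,{\bf m}\#{\bf n}}$ and $g_{R\#S,{\bf m}\#{\bf n}}$ explicitly in terms of $F_R,F_S,f_R,f_S,G_R,G_S,g_R,g_S$, and then verify the stated identity by a direct algebraic rearrangement. The crucial input is a decoupling of the Frobenius power of the Segre-product maximal ideal along the two toric factors.

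I will first establish the product formula
\begin{equation*}
f_{R\#S,{\bf m}\#{\bf n}}\;=\;F_Rf_S+F_Sf_R-f_Rf_S
\end{equation*}
by lattice-point counting. Writing $R,S,R\#S$ as semigroup rings attached to $P_D,P_{D'},P_D\times P_{D'}$, one checks that the monomial $\chi^{(w,w',n)}\in R\#S$ lies in $({\bf m}\#{\bf n})^{[q]}$ iff there exist $u\in P_D\cap M$ and $v\in P_{D'}\cap M'$ with $w\in qu+(n-q)P_D$ \emph{and} $w'\in qv+(n-q)P_{D'}$; these conditions decouple, so by inclusion–exclusion
\begin{equation*}
\ell\big((R\#S)/({\bf m}\#{\bf n})^{[q]}\big)_n\;=\;A_nb_n+a_nB_n-a_nb_n,
\end{equation*}
where $A_n=\ell(R_n),\ a_n=\ell((R/{\bf m}^{[q]})_n),\ B_n=\ell(S_n),\ b_n=\ell((S/{\bf n}^{[q]})_n)$. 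Dividing by $q^{d+d'-2}=q^{\dim(R\#S)-1}$ and passing to the limit via Theorem~\ref{hkd} yields the displayed product formula.

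Next, using Definition~\ref{d5}, write $f_n^R(\lambda)=f_R(\lambda')+g_n^R(\lambda)/q$ and $F_n^R(\lambda)=F_R(\lambda')+G_n^R(\lambda)/q$ with $\lambda'=\lfloor q\lambda\rfloor/q$, and analogously for $S$. Substituting into the identity $f_n^{R\#S}=F_n^Rf_n^S+f_n^RF_n^S-f_n^Rf_n^S$ and extracting the $1/q$-order contribution to $f_n^{R\#S}-f_{R\#S}(\lambda')$ (which, multiplied by $q$, is $g_n^{R\#S}$ by construction), I obtain after simplification
\begin{equation*}
g_n^{R\#S}\;=\;F_Rg_n^S+F_Sg_n^R+G_n^Rf_S+G_n^Sf_R-f_Rg_n^S-f_Sg_n^R+O(1/q),
\end{equation*}
with each $R$- and $S$-function evaluated at $\lambda'$. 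Taking the limit via Lemma~\ref{l42} (for the convergence of $g_n$) and via the Hilbert polynomial expansion (for that of $G_n$) produces
\begin{equation*}
g_{R\#S,{\bf m}\#{\bf n}}\;=\;F_Rg_S+F_Sg_R+G_Rf_S+G_Sf_R-f_Rg_S-f_Sg_R.
\end{equation*}

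Finally I combine with $G_{R\#S}=G_RF_S+G_SF_R$ from Remark~\ref{r5} and group the difference $G_{R\#S}-g_{R\#S}$ into the four products $G_S(F_R-f_R)$, $G_R(F_S-f_S)$, $-g_S(F_R-f_R)$, $-g_R(F_S-f_S)$; this immediately rearranges to $(G_R-g_R)(F_S-f_S)+(G_S-g_S)(F_R-f_R)$, as required. The main technical step is the first one — verifying the toric decoupling that yields the product formula for $f_{R\#S}$; once that is in hand, the rest is careful bookkeeping with the $1/q$-expansions of the HK and Hilbert density functions.
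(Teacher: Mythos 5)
Your proof is correct and follows essentially the same route as the paper's: both rest on the Segre length identity
$\ell((R\#S)/({\bf m}\#{\bf n})^{[q]})_m=\ell(R_m)\ell(S/{\bf n}^{[q]})_m+\ell(S_m)\ell(R/{\bf m}^{[q]})_m-\ell(R/{\bf m}^{[q]})_m\ell(S/{\bf n}^{[q]})_m$,
divide by the appropriate power of $q$, extract the $1/q$-order term, pass to the limit, and then rearrange using $G_{R\#S}=G_RF_S+G_SF_R$. The only real differences are cosmetic: you re-derive the length identity by toric lattice-point counting (the paper simply cites Proposition 2.17 of [T2], which holds for arbitrary standard graded Segre products and sidesteps any normality bookkeeping for $P_D$), and you organize the extraction of the subleading coefficient as a clean $1/q$-Taylor expansion $f_n=f+g_n/q$, $F_n=F+G_n/q$, whereas the paper splits $g_n$ into three convergent pieces $\Phi_n,\Psi_n,\xi_n$ and takes their limits separately. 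Both lead to the same intermediate formula $g_{R\#S}=F_Rg_S+G_Rf_S+F_Sg_R+G_Sf_R-f_Rg_S-f_Sg_R$ and then to the stated factorization.
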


\begin{proof}
Let $\lambda\in[0, \infty)$. For $n\in N$ and $q=p^n$, we write 
$m=\lfloor q\lambda\rfloor$.
then, for the Noetherian standard graded ring $R\#S$ with homogenous
maximal ideal ${{\bf m}}\#{{\bf n}}$, we have
\begin{eqnarray*}
g_n(\lambda)=\frac{1}{q^{d+d'-3}}
\left(\ell\big(R\#S/({{\bf m}}\#{{\bf n}})^{[q]}\big)_{m}
-f_{R\#S, {\bf m}\#{\bf n}}(m/q)q^{d+d'-2}\right).
\end{eqnarray*}
By Proposition 2.17, \cite{Tri2015}, we have
\begin{eqnarray*}
g_n(\lambda)&=&\frac{1}{q^{d+d'-3}}\left[\ell(R_{m})\ell(S/{\bf n}^{[q]})_m
+\ell(S_{m})\ell(R/{\bf m}^{[q]})_m-\ell(R/{\bf m}^{[q]})_m\ell(S/{\bf n}^{[q]})_m\right]\\
&&-\left[F_R(m/q)f_{S, {\bf n}}(m/q)+F_S(m/q)f_{R, {\bf m}}(m/q)
-f_{R, {\bf m}}(m/q)f_{S, {\bf n}}(m/q)\right]q.\\
&=& \Phi_n(\lambda)+\Psi_n(\lambda)+\xi_n(\lambda),
\end{eqnarray*}
where
\begin{eqnarray*}
\Phi_n(\lambda)&=&\frac{1}{q^{d+d'-3}}\left[\ell(R_{m})
\ell(S/{\bf n}^{[q]})_m-F_R(m/q)f_{S, {\bf n}}(m/q)q^{d+d'-2}\right],\\
\end{eqnarray*}
\begin{eqnarray*}
\Psi_n(\lambda)&=&\frac{1}{q^{d+d'-3}}\left[\ell(S_{m})
\ell(R/{\bf m}^{[q]})_m-F_S(m/q)f_{R, {\bf m}}(m/q)q^{d+d'-2}\right]\\
\end{eqnarray*}
and
\begin{eqnarray*}
\xi_n(\lambda)&=&-\frac{1}{q^{d+d'-3}}\left[\ell(R/{\bf m}^{[q]})_m
\ell(S/{\bf n}^{[q]})_m-f_{R, {\bf m}}(m/q)
f_{S, {\bf n}}(m/q)q^{d+d'-2}\right].\\
\end{eqnarray*}
We have\begin{eqnarray*}
\Phi_n(\lambda)&=&\frac{1}{q^{d+d'-3}}\left[\ell(R_{m})
\ell(S/{\bf n}^{[q]})_m-F_R(m/q)f_{S, {\bf n}}(m/q)q^{d+d'-2}\right]\\
&=&\frac{1}{q^{d-1}}\ell(R_{m})\times\frac{1}{q^{d'-2}}
\left[(\ell(S/{\bf n}^{[q]})_m-f_{S, {\bf n}}(m/q)q^{d'-1})\right]\\
&&+ f_{S, {\bf n}}(m/q)\times\frac{1}{q^{d-2}}\left[\ell(R_{m})-
F_R(m/q)q^{d-1}\right].
\end{eqnarray*}
Thus 
$\lim_{n\to\infty}\Phi_n(\lambda)=F_R(\lambda)g_{S, {\bf n}}(\lambda)
+f_{S, {\bf n}}(\lambda)G_R(\lambda)$. Similarly 
$\lim_{n\to\infty}\Psi_n(\lambda)=F_S(\lambda)g_{R, {\bf m}}(\lambda)
+f_{R, {\bf m}}(\lambda)G_S(\lambda)$
and $\lim_{n\to\infty}\xi_n(\lambda)=f_{R, {\bf m}}(\lambda)g_{S, {\bf n}}(\lambda)
+ f_{S, {\bf n}}(\lambda)g_{R, {\bf m}}(\lambda)$.
\end{proof}
This implies,
$$g_{R\#S, {\bf m}\#{\bf n}}=F_Rg_{S, {\bf n}}+f_{S, {\bf n}}G_R+F_Sg_{R, {\bf m}}+f_{R, {\bf m}}G_S
+f_{R, {\bf m}}g_{S, {\bf n}}+f_{S, {\bf n}}g_{R, {\bf m}}.$$
By Remark \ref{r5}, we have
$$G_{R\#S}-g_{R\#S, {\bf m}\#{\bf n}}=(G_R-g_{R, {\bf m}})(F_S-f_{S,{\bf n}} )
+(G_S-g_{S, {\bf n}})(F_R-f_{R, {\bf m}}).$$

\vspace{5pt}

\begin{ex}\label{ex1}
Consider the toric pair $(\P^2, -K)$, where $-K$ is the 
anticanonical divisor of $\P^2$, and let $(R, {\bf m})$ be the 
associated coordinate ring with the homogeneous maximal ideal ${\bf m}$.
 The Hilbert-Kunz function of $(R, {\bf m})$
is
$\mbox{HK}(R, {\bf m})(q)=q^3+O(q).$
For $\lambda\in \R_{\geq 0}$ and for $q=p^n$, let
$f_n(R, {\bf m})$ and $f_{R, {\bf m}}$ be as in given in Theorem~\ref{hkd}
and $g_n$ and $g_{R, {\bf m}}$ be as in Definition~\ref{d5}.
 A simple 
calculation shows 
\begin{eqnarray*}
f_n(\lambda)&=&
\begin{cases}
\frac{1}{2q^2}(m+2)(m+1) & \text{if $0\leq \lambda< 1$,}\\
\frac{1}{2q^2}\left((m+2)(m+1)-3(m-q+2)(m-q+1)\right) & 
\text{if $1\leq \lambda< 2$,}\\
\frac{1}{2q^2}[(m+2)(m+1)-3(m-q+2)(m-q+1)\\
+3(m-2q+2)(m-2q+1)] & \text{if $1\leq \lambda< 2$.}\\
\end{cases}
\end{eqnarray*}
Hence
\begin{eqnarray*}
f_{R, {\bf m}}(\lambda)&=&
\begin{cases}
\frac{1}{2}\lambda^2 & \text{if $0\leq \lambda< 1$,}\\
\frac{1}{2}\lambda^2-\frac{3}{2}(\lambda-1)^2 & 
\text{if $1\leq \lambda< 2$,}\\
\frac{1}{2}\lambda^2-\frac{3}{2}(\lambda-1)^2 + 
\frac{3}{2}(\lambda-2)^2 & \text{if $1\leq \lambda< 2$.}\\
\end{cases}
\end{eqnarray*}
\begin{eqnarray*}
g_n(\lambda)&=&
\begin{cases}
\frac{1}{q}\left(\frac{3}{2}m+1\right) & \text{if $0\leq \lambda< 1$,}\\
\frac{1}{q}\left(-3m+\frac{9}{2}q-2\right) & \text{if $1\leq \lambda< 2$,}\\
\frac{1}{q}\left(\frac{3}{2}m-\frac{9}{2}q+1\right) & 
\text{if $1\leq \lambda< 2$,}\\
\end{cases}
\end{eqnarray*}
\begin{eqnarray*}
g_{R, {\bf m}}(\lambda)&=&
\begin{cases}
\frac{3}{2}\lambda & \text{if $0\leq \lambda< 1$,}\\
-3\lambda+\frac{9}{2} & \text{if $1\leq \lambda< 2$,}\\
\frac{3}{2}\lambda-\frac{9}{2} & \text{if $2\leq \lambda< 3$,}\\
\end{cases}
\end{eqnarray*}
and $\int_{0}^{\infty} g_{R, {\bf m}}(\lambda)\ d\lambda=0$\ .
\end{ex}

\begin{ex}\label{ex2}
We compute the $\beta$-density function for the Hirzebruch surface $X=\F_a$ 
with parameter $a\in{\mathbb{N}}$, which is a ruled surface over 
$\mathbb{P}^1_K$, where $K$ is a field of characteristic $p> 0$. 
See [?] for a detailed description of the surface as a 
toric variety. The $T$-Cartier divisors are given by 
$D_i=V(v_i), i=1,2 ,3 , 4$, where $v_1=e_1, v_2=e_2, v_3=-e_1+ae_2, 
v_4=-e_2$ and $V(v_i)$ denotes the $T$-orbit closure corresponding to 
the cone generated by $v_i$. We know the Picard group is generated 
by $\{D_i\ : i=1, 2,3, 4\}$ over $\mathbb{Z}$. One can check the 
only relations in $\text{Pic}(X)$ can be described by 
$D_3\thicksim D_1$ and $D_2\thicksim D_4-aD_1$. Therefore 
$\text{Pic}(X)=\mathbb{Z}D_1\oplus\mathbb{Z}D_4$. One can use 
standard method of toric geometry to see that $D=cD_1+dD_4$ is 
ample if and only if $a, c>0$. Then 
$P_D=\{(x,y)\in M_\mathbb{R}\ |\ x\geq -c, y\leq d, x\leq ay\}$ 
and $\alpha^2=\text{\mbox{Vol}}(P_D)=cd+\frac{ad^2}{2}$.  For a detailed 
analysis of Hilbert-Kunz function and Hilbert-Kunz density 
function of $\F_a$, see [T1]. In the following we use results  
from [T1] to calculate $\{f_n(R, {\bf m})\}$ and $f_{R, {\bf m}}$ in order to 
describe the $\beta$-density function. One can also use the computations from 
Example 7.2 of [MT].

If $c \geq d$ then $f_{R, {\bf m}}(q) = (*)$, where
\begin{eqnarray*}
(*)&=&q^3 (c+\frac{ad}{2})\left[ \frac{d}{3}+\frac{(d+1)d}{6c(ad+c)}
+\frac{1}{2}+\frac{1}{6d}\right]\\
&& + q^2\left( (c+\frac{ad}{2})(d+1)\left[ \frac{1}{4c} 
+ \frac{1}{4(ad+c)} -\frac{d}{2c(ad+c)} -\frac{1}{ 2d}\right] 
+\frac{d+ 1}{2}\right) +O(q)
\end{eqnarray*}
and if $c<d$ then
$f_{R, {\bf m}}(q)=$\begin{eqnarray*}
&&(*)+q^3d\left(c+\frac{(d+1)a}{2}\right)\left(\frac{(a+1)^3}{6a(ad + c)} 
-\frac{1}{6ac} -\frac{a}{ 6d} -\frac{1}{ 2d} +\frac{c}{6d^2}\right)\\
&&-q^2d\left(c+\frac{(d+1)a}{2}\right)
\left[ \frac{(a+2)(a+1)^3}{4a(c + ad)} +\frac{a-2}{ 4ac} 
-\frac{a+2}{4d} -\frac{1}{ d} +\frac{c}{ 2d^2}\right] +O(q).
\end{eqnarray*}

An easy but tedious calculation shows that, for $c\geq d$
\begin{eqnarray*}
                                   g_{R, {\bf m}}(\lambda)&=&    
\begin{cases} 
\left(c+\frac{ad}{2} +d\right)\lambda & \text{ if $ 0\leq \lambda < 1$} \\\\
-\left(c+\frac{ad}{2}+d\right)\left(cd+\frac{ad^2}{2}+c+\frac{ad}{2}+d\right)
\lambda\\
+ \left(c+\frac{ad}{2} +d\right)(d+1)\left(c+\frac{ad}{2}+1\right)  
&  \text{ if $ 1\leq \lambda < 1+\frac{1}{c+ad}$}\\\\
d(d+1)-d\left(c+\frac{ad}{2} +d\right)\lambda\\
+\left(c+\frac{ad}{2}\right)(d+1)
\left(\frac{1}{2}-\frac{1}{a}\right)(c+1-c\lambda) 
&  \text{ if $ 1+\frac{1}{c+ad}\leq \lambda < 1 +\frac{1}{c}$}\\\\
d(d+1)-d\left(c+\frac{ad}{2} +d\right)\lambda & 
\text{ if $1+\frac{1}{c}\leq \lambda < 1 +\frac{1}{d}$ ,}\\
 \end{cases}
\end{eqnarray*}

and for $c \leq d $ 
\begin{eqnarray*}
g_{R, {\bf m}}(\lambda)&=&    
\begin{cases} 
\left(c+\frac{ad}{2} +d\right)\lambda & \text{ if $ 0\leq \lambda < 1$} \\\\
-\left(c+\frac{ad}{2} +d\right)
\left(cd+\frac{ad^2}{2}+c+\frac{ad}{2}+d\right)\lambda \\
+ \left(c+\frac{ad}{2} +d\right)(d+1)\left(c+\frac{ad}{2}+1\right)  
&  \text{ if $ 1\leq \lambda < 1+\frac{1}{c+ad}$}\\\\
d(d+1)-d\left(c+\frac{ad}{2} +d\right)\lambda\\
+\left(c+\frac{ad}{2}\right)(d+1)
\left(\frac{1}{2}-\frac{1}{a}\right)(c+1-c\lambda) 
&  \text{ if $ 1+\frac{1}{c+ad}\leq \lambda < 1 +\frac{1}{d}$}\\\\
\left(c+\frac{ad}{2}\right)(d+1)\left(\frac{1}{2}-\frac{1}{a}\right)
(c+1-c\lambda)\\
+d\left(c+\frac{ad}{2}+\frac{a}{2}\right)
\left(c+\frac{ad}{2}+d\right)(\lambda-1)\\
-d\left(2+\frac{a}{2}\right)\left(c+\frac{ad}{2}+\frac{a}{2}\right) 
& \text{ if $1+ \frac{1}{d}\leq \lambda < 1 +\frac{a+1}{ad+c}$}\\\\
c\left(\frac{1}{2}-\frac{1}{a}\right)(c+1-c\lambda)
&  \text{ if $1 +\frac{a+1}{ad+c}\leq \lambda < \frac{1}{c}$ .}
\end{cases}
\end{eqnarray*}
\end{ex}

\section{Appendix}
We recall  the following notion of
relative volume for a convex polytope, given by
R. Stanley (see page 238 in [St])).

\begin{defn}\label{rv} Let $Q\subset \R^d$ be an integral convex polytope of
dimension $m$. Let $A(Q)$ be the affine span of $Q$. Then $A(Q)\cap \Z^d$ is
an abelian group of rank $m$.
Choose an invertible affine  transformation $\varphi:A(Q)\longto \R^m$
such that
$\varphi(A(Q)\cap \Z^d) = \Z^m$. Then the image $\varphi(Q)$ of $Q$ is an integral
polytope and relative volume of $Q$ is defined to be volume of $\varphi(Q)$.

If $Q$ is an $m$-dimensional rational polytope then there is  $n>0$ such that
 $nQ$ is an
integral polytope. We define $\mbox{rVol}_{m}(Q) := \mbox{rVol}_{m}(nQ)/{n^{m}}$
and $\mbox{rVol}_{m_1}(Q) := 0$ if $m_1 > m$.

Moreover,  if $Q = \cup_iQ_i$, is a finite union of
 $d'$-dimensional convex rational polytopes $Q_i$ such that
$\dim{(Q_i\cap Q_j)}< d'$, for $Q_i\neq Q_j$, then
we define  $\mbox{rVol}_{d'}Q = \sum_i\mbox{rVol}_{d'}Q_i$ and
$\mbox{rVol}_{d''}Q = 0$, if $d''> d'$.
One can show that $\mbox{rVol}_{d'}Q$ is independent 
of the choice of the finite decomposition $Q = \cup_iQ_i$. 

\end{defn}

\begin{lemma}\label{rvol}Let $F$ be a rational convex polytope of dimension
$d-1$ in $\R^d$
satisfying the following:
$A(F)\cap \{z=i\}\cap \Z^d \neq \phi$, for every $i\in \Z$,
where
$A(F)$ denotes the affine hull of the polytope  $F$, and $z$ is the coordinate function on 
$\R^d$. Let $\varphi:A(F)\longto
\R^{d-1}$ be an invertible affine  transformation such that
$\varphi(A(F)\cap \Z^d) = \Z^{d-1}$.

Then there is $z_1 \in A(F)\cap \{z=1\}\cap \Z^d$ and
$\{x_1, \ldots, x_{d-2}\}\in A(F)\cap \{z=0\}\cap \Z^d$ such
that $\{\varphi(x_1),\ldots, \varphi(x_{d-2}), \varphi(z_1)\}$ is a basis
of $\R^{d-1}$ and
$$\textnormal{\mbox{rVol}}_{d-2}~(F\cap \{z=i\}) =
\textnormal{\mbox{Vol}}_{d-2}~(\varphi(F)\cap \{\pi_\varphi(\varphi(z_1))=i\}),$$
where $\pi_\varphi:\R^{d-1} = \varphi(A(F)) \longto \R$ is the map given by
$\sum\lambda_i\varphi(x_i)+\lambda_{d-1}\varphi(z_1)\to \lambda_{d-1}$.
\end{lemma}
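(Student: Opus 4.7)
The plan is to leverage $\varphi$ as a lattice-preserving affine isomorphism to transport the relative $(d-2)$-volume of each slice $F\cap\{z=i\}$ into an ordinary $(d-2)$-volume on the hyperplane $\{\pi_{\varphi}=i\}\subset\R^{d-1}$. First, I would set up the lattice structure on $A(F)$. Using the hypothesis at $i=0$ and $i=1$, pick $x_0\in A(F)\cap\{z=0\}\cap\Z^d$ and $z_1\in A(F)\cap\{z=1\}\cap\Z^d$. Then $L:=(A(F)\cap\Z^d)-x_0$ is a rank $(d-1)$ lattice in $\R^d$, and $L_0:=L\cap\{z=0\}$ is a rank $(d-2)$ sublattice. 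Because the $z$-coordinate restricts to a surjection $L\longto \Z$ with kernel $L_0$ and $z_1-x_0\in L$ has $z$-coordinate $1$, we get the splitting $L=L_0\oplus\Z(z_1-x_0)$. Choosing any $\Z$-basis $\{v_1,\ldots,v_{d-2}\}$ of $L_0$ and setting $x_j:=x_0+v_j\in A(F)\cap\{z=0\}\cap\Z^d$, the set $\{x_1-x_0,\ldots,x_{d-2}-x_0,z_1-x_0\}$ becomes a $\Z$-basis of $L$.

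Next, I would normalize $\varphi$. Since $\varphi(x_0)\in \Z^{d-1}$, replacing $\varphi$ by $\varphi-\varphi(x_0)$ preserves the condition $\varphi(A(F)\cap\Z^d)=\Z^{d-1}$ and does not affect the statement of the lemma (it just translates $\varphi(F)$ by a lattice vector, which changes neither volumes nor the defining equation of the hyperplane $\{\pi_{\varphi}=i\}$). So assume $\varphi(x_0)=0$, making $\varphi$ coincide with its linear part $\psi$ on $A(F)-x_0$. Then $\varphi(x_j)=\psi(x_j-x_0)$ and $\varphi(z_1)=\psi(z_1-x_0)$ are the $\psi$-images of a $\Z$-basis of $L$, hence form a $\Z$-basis of $\Z^{d-1}$, and in particular an $\R$-basis of $\R^{d-1}$, legitimizing the definition of $\pi_\varphi$. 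For any $v\in A(F)$, write $v-x_0=\sum_j\mu_j(x_j-x_0)+\nu(z_1-x_0)$; the $d$-th coordinate of $v$ is exactly $\nu$, and $\varphi(v)=\sum_j\mu_j\varphi(x_j)+\nu\varphi(z_1)$, so $\pi_\varphi(\varphi(v))=\nu=z(v)$. Hence $\varphi(F\cap\{z=i\})=\varphi(F)\cap\{\pi_\varphi=i\}$.

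For the volume identity, the affine lattice $A(F)\cap\{z=i\}\cap\Z^d=x_0+i(z_1-x_0)+\sum_j\Z v_j$ maps under $\varphi$ bijectively onto $i\varphi(z_1)+\sum_j\Z\varphi(x_j)=\{\pi_\varphi=i\}\cap\Z^{d-1}$. In the basis $\{\varphi(x_1),\ldots,\varphi(x_{d-2}),\varphi(z_1)\}$ of $\R^{d-1}$, the map ``forget the last coordinate'' restricts to an affine isomorphism $\{\pi_\varphi=i\}\to\R^{d-2}$ sending this integer lattice to $\Z^{d-2}$. Composing with $\varphi$ produces an invertible affine transformation $A(F)\cap\{z=i\}\to\R^{d-2}$ carrying $A(F)\cap\{z=i\}\cap\Z^d$ onto $\Z^{d-2}$, so by Definition~\ref{rv} its Lebesgue volume of the image computes $\mbox{rVol}_{d-2}(F\cap\{z=i\})$. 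This image equals $\varphi(F)\cap\{\pi_\varphi=i\}$ read in the natural affine coordinates on the hyperplane, giving the desired equality. The main obstacle is conceptual rather than computational: one must keep track of the normalizations so that ``basis of $\R^{d-1}$'' really holds and the ordinary $(d-2)$-volume on the hyperplane $\{\pi_\varphi=i\}$ is interpreted via the coordinate system dual to $\{\varphi(x_j)\}$; both points are resolved once $\varphi$ is translated to send $x_0$ to the origin.
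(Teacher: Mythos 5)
Your argument is correct and tracks the paper's own proof step for step: choose a lattice base point $x_0$ in the $\{z=0\}$ slice, split off the $z$-direction, normalize $\varphi$ to be linear on $A(F)-x_0$, and show that the affine lattice in each slice $\{z=i\}$ maps isomorphically onto $\{\pi_\varphi=i\}\cap\Z^{d-1}$. The only real difference is cosmetic: you invoke the splitting of the short exact sequence $0\to L_0\to L\to\Z\to 0$ to get the $\Z$-basis, which is a tidier way of packaging what the paper verifies by hand when proving $V_n = V_0 + nz_1$; from that point on the two arguments are the same. One shared point that is slightly glossed over in both: the normalization $\varphi\mapsto\varphi-\varphi(x_0)$ establishes the basis property $\{\tilde\varphi(x_j),\tilde\varphi(z_1)\}$ for the translated map, not automatically for the original $\varphi$ (adding the common vector $\varphi(x_0)$ to a basis can destroy linear independence precisely when $\varphi(x_0)$ has coefficient sum $-1$ in that basis). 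Your remark that translation ``does not affect the statement'' skips this; strictly one should either note that a different choice of $x_0$ (or of the lattice basis) always avoids the degenerate configuration, or read the lemma as asserting the existence of a suitable $\varphi$, which is how it is actually used in Remark~\ref{r4}. Since the paper's own proof has the identical gap, this does not distinguish your argument from theirs.
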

\begin{proof}\underline{Reduction}:
By hypothesis, we can choose
 $x_0\in A(F)\cap \Z^d$. Let $y_0 = \varphi(x_0)$.
Let $\psi_{x_0}:\R^d\longto \R^d$ and $\psi'_{-y_0}:\R^{d-1}\longto \R^{d-1}$
 denote the translation maps given by the elements $x_0$ and $-y_0$
respectively.
Then replacing $\varphi$ by $\psi'_{-y_0}\circ \varphi \circ\psi_{x_0}:A(F)-x_0\longto
\R^{d-1}$ and $A(F)$ by $A(F)-x_0$., we can assume that $\varphi$ is a linear
transformation and $A(F)$ is an $\R$-vector space.
Let $V_n = A(F)\cap \{z=n\}$ then $V_0$ is a $d-1$ dimensional
vector subspace. We choose $z_1\in V_1$.

\vspace{5pt}
\noindent{\bf Claim} (1) $V_n = V_0+nz_1$ and

(2) $\varphi(V_n) = \varphi(A(F))\cap \{\pi_\varphi\varphi(z_1) = n\}$.
There exists a basis $\{\varphi(x_1), \ldots, \varphi(x_{d-2}), \varphi(z_1)\}
\in \Z^{d-1}$ of $\varphi(A(F))$.

\noindent{Proof of the claim}: (1)\quad Let $x_1, \ldots, x_{d-2}$ be a set of
generators of the free abelian  group $V_0\cap \Z^d$.
Then $\{x_1, \ldots, x_{d-2}\}$ is a vector space basis for $V_0$.
Therefore
$\{\varphi(x_1), \ldots, \varphi(x_{d-2}), \varphi(z_1)\}$ is a basis for
$\varphi(A(F)) = \R^{d-1}$. Hence $\varphi(A(F)) = \sum_i\R\varphi(x_i)+\R\varphi(z_1)$.
It is obvious that $V_0+nz_1 \subseteq V_n$. Let $y\in V_n$ then
$y=\sum_i\lambda_ix_i+\lambda_0 z_1$. Comparing the $(d-1)^{th}$ coordinate we get
$\lambda_0 = n$ and therefore $y\in V_0+nz_1$.
(b)\quad Let $y\in V_n = V_0+nz_1$. Then $y= (\sum_i\lambda_ix_i) + nz_1$.
This implies $\varphi(V_n) = \sum_i\R\varphi(x_i)+\R\varphi(z_1) \subseteq
\varphi(A(F))\cap
\{\pi_\varphi(\varphi(z_1)) = n\}$.  This proves the claim.

Since
$\varphi(V_n) =  (\sum_i\R\varphi(x_i))+n\varphi(z_1)$
and $V_n\cap\Z^d = \sum_{i=1}^{d-2}\Z{x}_i+\varphi({z}_1)$,
we have $\varphi(V_n\cap \Z^d)\subseteq \varphi(V_n)\cap \Z^{d-1}$.
Now
$$\varphi(A(F)\cap \Z^d)= \cup_n\varphi(V_n\cap\Z^d) \subseteq
\cup_n(\varphi(V_n)\cap\Z^{d-1})\subseteq \varphi(A(F)\cap\Z^{d-1}),$$
This implies  $\varphi(V_n\cap \Z^d) = \varphi(V_n)\cap \Z^{d-1}$, for all
$n\in \Z$. Hence  $\mbox{rVol}_{d-2}(V_n) = \mbox{Vol}_{d-2}\varphi(V_n) =
\mbox{Vol}_{d-2}[\varphi(A(F))\cap \{\pi_\varphi(\varphi(z_1)) =n\}]$.
This proves the lemma.
\end{proof}

\begin{lemma}\label{la}If $Q$ is a convex rational polytope in $\R^d$ and 
$Q_\lambda = Q\cap \{z = \lambda\}$, for $\lambda\in \R$ then there is a
constant $C_Q$ (independent of $\lambda$)  such that 
$i(Q_\lambda, n) = c_{\lambda}(n)n^{dim~Q_\lambda}$ with $|c_{\lambda}(n)|\leq C_Q$.
\end{lemma}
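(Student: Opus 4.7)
The plan is to deduce the lemma from Theorem~\ref{tcb} after a reduction to the full-dimensional setting. Since $i(Q_\lambda, n) = 0$ whenever $Q_\lambda = \emptyset$ or $n\lambda \notin \Z$, I may assume $Q_\lambda \neq \emptyset$ and $n\lambda \in \Z_{\geq 0}$.

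First suppose $\dim Q = d$. Then Theorem~\ref{tcb} applies directly and writes
$$i(Q_\lambda, n) = \sum_{i=0}^{\dim Q_\lambda} C_i(Q_\lambda, n)\, n^i, \qquad |C_i(Q_\lambda, n)| \leq \tilde c_i(Q),$$
with $\tilde c_i(Q)$ independent of $\lambda$. The crucial observation is that although Theorem~\ref{tcb} bounds $C_i$ uniformly up to $i = d-1$, the coefficients $C_i(Q_\lambda, n)$ for $i > \dim Q_\lambda$ vanish automatically since, by Ehrhart's theorem, the quasi-polynomial $i(Q_\lambda, -)$ has degree exactly $\dim Q_\lambda$. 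Using $n^i \leq n^{\dim Q_\lambda}$ for $0 \leq i \leq \dim Q_\lambda$ and $n \geq 1$, this yields $i(Q_\lambda, n) \leq \bigl(\sum_{i=0}^{d-1} \tilde c_i(Q)\bigr)\, n^{\dim Q_\lambda}$, so we may take $C_Q = \sum_i \tilde c_i(Q)$.

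For the remaining case $\dim Q = m < d$, I would reduce to dimension $m$. If $z|_{A(Q)}$ is constant equal to some $\lambda_0$, then $Q_\lambda = Q$ for $\lambda = \lambda_0$ and empty otherwise, so Ehrhart's theorem applied to $Q$ as a rational polytope of dimension $m$ gives $i(Q, n) \leq C\, n^m = C\, n^{\dim Q_\lambda}$. If $z|_{A(Q)}$ is non-constant, I would choose a unimodular integer affine transformation $\Phi$ of $\R^d$ sending $A(Q)$ onto the coordinate subspace $\R^m \times \{0\}^{d-m}$ and carrying $z|_{A(Q)}$ to a non-zero rational multiple of the last coordinate of $\R^m$. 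Then $\Phi(Q)$ is a full-dimensional rational polytope in $\R^m$, and Theorem~\ref{tcb} in dimension $m$ (applied to $\Phi(Q)$ with the induced slicing) yields the desired uniform bound, which transfers back to $Q$ with only a constant-factor loss depending on $Q$.

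The main obstacle is the construction of this unimodular reduction when $A(Q) \cap \Z^d$ is either empty or of rank less than $m$. I would handle this by first dilating $Q$ by an integer $n_0$ dividing the rational denominator $\tau(Q)$ so that $n_0 A(Q)$ contains lattice points of full affine rank $m$, and then applying Smith normal form to the integer matrix defining $n_0 A(Q)$ to extract $\Phi$. The extra dilation contributes at most a factor of $n_0^m$ to $C_Q$, which remains independent of $\lambda$.
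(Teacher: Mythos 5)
Your proposal deduces the lemma from Theorem~\ref{tcb}, which is far heavier machinery than the statement requires, and — more to the point — the reduction you sketch for non-full-dimensional $Q$ has real gaps.

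The paper's own proof is a one-line counting argument: since $Q$ is bounded, its diameter gives a constant $C'_Q$ such that every slice $Q_\lambda$, of dimension $d_\lambda$, sits inside an integral $d_\lambda$-dimensional cube $P_\lambda$ of side length at most $C'_Q$. Then $i(Q_\lambda,n)\le i(P_\lambda,n)=O_Q(n^{d_\lambda})$, uniformly in $\lambda$. Nothing about Ehrhart coefficients is needed; the only fact used is that any sublattice of $\Z^{d}$ has minimum distance $\ge 1$, so a region of diameter $R$ in its rank-$k$ span contains $O(R^k)$ lattice points. In contrast you route the full-dimensional case through Theorem~\ref{tcb} (hence through the whole Henk--Linke apparatus of Section~4). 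That half of your argument is correct: the sum in Theorem~\ref{tcb} is truncated at $i=\dim Q_\lambda$, the bounds $|C_i|\le\tilde c_i(Q)$ for $i\le d-1$ then cover all the terms that occur, and $n^i\le n^{\dim Q_\lambda}$ finishes it. But it is overkill.

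The genuine problem is the case $\dim Q = m<d$. Theorem~\ref{tcb} is stated only for a full-dimensional polytope, so your reduction is not optional — and as written it does not quite work. First, the dilation step: replacing $Q$ by $n_0Q$ changes the affine hull, and $i(Q_\lambda,n)\ne i\bigl((n_0Q)_{n_0\lambda},n/n_0\bigr)$ unless $n_0\mid n$. (One can salvage this with $i(Q_\lambda,n)\le i(Q_\lambda,n_0n)=i\bigl((n_0Q)_{n_0\lambda},n\bigr)$, but this needs to be said.) Second, and more seriously, after the unimodular identification $\Phi$ of $A(n_0Q)$ with $\R^m\times\{0\}^{d-m}$, the coordinate $z$ pulls back only to a rational, generally non-integral, affine functional on $\R^m$. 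Theorem~\ref{tcb} requires the slicing direction to be a lattice coordinate, so it does not "apply directly to $\Phi(Q)$ with the induced slicing": you either need a further unimodular normalization inside $\R^m$ (which may itself require another dilation, since the pullback of $z$ need not be primitive on $\Z^m$), or you need to first prove a version of Theorem~\ref{tcb} for arbitrary rational affine slicings. You also need to dispose of the integrality transfer (if the transformed $n\mu\notin\Z$ then the count is zero, so nothing is lost — but this is a case that must be noticed). None of this is unfixable, but as written the proof of the lower-dimensional case is incomplete, and fixing it makes the argument substantially longer than the elementary bounding-box proof the lemma is asking for.
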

\begin{proof}For $\lambda \in \R$, let $d_\lambda = \dim~Q_\lambda$. 
Let $P_\lambda$ be an integral 
$d_\lambda$-dimensional cube with length of each side $=c_\lambda$ and 
$Q_\lambda \subseteq P_\lambda$.  Then  
$i(Q_\lambda, n) \leq i(P_\lambda, n) \leq  c_\lambda^{d_\lambda}n^{d_\lambda}$.
Since $Q$ is a bounded set, there exists a constant $C'_Q$ such that, 
for any $\lambda$ we can choose $P_\lambda$ with 
$|c_\lambda|\leq C'_Q$. Hence the lemma follows by taking 
$C_Q = (C'_Q)^{\dim~Q}$.\end{proof}

\end{document}